\documentclass{article}

\usepackage[preprint]{main}

\usepackage[utf8]{inputenc} 
\usepackage[T1]{fontenc}    
\usepackage{hyperref}       
\usepackage{url}            
\usepackage{booktabs}       
\usepackage{amsfonts}       
\usepackage{nicefrac}       
\usepackage{microtype}      
\usepackage{xcolor}         
\usepackage{tikz}           
\usepackage{bbm}
\title{Stochastic Rounding Increases Small Singular Values}

\author{%
  Linkai Ma\thanks{Equal contribution.} \\
  Department of Computer Science\\
  Purdue University\\
  \texttt{ma856@purdue.edu} \\
  \And
  Tingzhou Yu \footnotemark[1] \\
  Department of Mathematics \\
  University of Alberta \\
  \texttt{tingzho1@ualberta.ca} \\
  \AND
  Petros Drineas \\
  Department of Computer Science \\
  Purdue University \\
  \texttt{pdrineas@purdue.edu} \\
}

\input{Definitions}

\begin{document}

\maketitle

\begin{abstract}
Over the past half-dozen years, stochastic rounding (SR) has regained significant attention as a quantization scheme for low-precision floating-point arithmetic, with applications spanning numerical analysis and modern machine learning systems. Recent work has shown that SR acts as an implicit regularizer by increasing the smallest singular value of extremely tall-and-thin (or, symmetrically, short-and-fat) matrices. In this work, we substantially sharpen and extend this understanding in two directions. First, we show that the regularization effect of SR is not restricted to extreme aspect ratio regimes: it persists for matrices with constant aspect ratio. Second, we demonstrate that SR does not merely regularize the smallest singular value, but instead lifts entire clusters of singular values at the tail of the spectrum. Together, these results provide a more general characterization of stochastic rounding as a spectral regularizer, revealing that its effects extend beyond extremal aspect ratios and act on a broader portion of the singular value spectrum.
\end{abstract}

\section{Introduction}

In recent years, the rapid scaling of machine learning models, such as deep neural networks and Large Language Models, has driven an increasing reliance on low-precision arithmetic to meet the computational and memory demands of modern AI workloads~\citep{micikevicius2018mixed,dettmers2022gpt3}. Quantization techniques~\citep{gholami2022survey,jin2024comprehensive,gupta2025effective}, which map high-precision values to low-bit representations, have become important for both training and inference. Among these techniques, stochastic rounding (SR) has recently re-emerged as a powerful alternative to traditional deterministic rounding schemes~\citep{gupta2015deep,xia2024stochastic,ozkara2025stochastic}, owing to its unbiasedness and favorable error propagation properties.

SR was introduced in the Numerical Analysis community in the 1950s in~\cite{forsythe1959rounding,vonneumann1947numerical} and was mostly forgotten until recently. In the early 2020s~\citep{ipsen2020probabilistic,connolly2021stochastic} highlighted its role in mitigating numerical error accumulation, followed by a growing body of work suggesting that stochastic rounding has non-trivial structural effects on the matrices it is applied to. In particular, recent theoretical results~\citep{dexter2025stochastic} demonstrate that SR can act as an implicit regularizer: when applied entrywise to a matrix, it increases the smallest singular value and promotes full-rank structure with high probability. This phenomenon has important, yet under-explored, implications for downstream tasks, including regression, optimization, and deep learning, where spectral properties of matrices play a central role in numerical stability and generalization.

However, despite this progress, our current theoretical understanding of SR-induced regularization remains incomplete in at least two key aspects. \textit{First,} existing results are largely confined to extreme aspect ratio regimes, such as very tall-and-thin matrices (or, symmetrically, very short-and-fat matrices), where randomness can accumulate across a large number of rows or columns. While these settings are analytically convenient, they do not capture the near-square matrices that frequently arise in modern machine learning pipelines, including self-attention weights, fully-connected and convolutional weight matrices, etc.
%
\textit{Second,} prior analyses focus almost exclusively on the smallest singular value, treating regularization as a one-dimensional phenomenon. In practice, however, many applications, particularly in deep learning, depend not only on the extremal singular values but also on the structure of the spectral tail, where clusters of small singular values encode fine-grained information. Indeed, recent studies highlight that the transition of this spectral tail into a heavy-tailed distribution serves as a critical signature of implicit self-regularization, often dictating the network's generalization capacity~\citep{mahoney2019traditional,martin2021implicit,martin2021predicting,he2026alphadecay}.

These limitations raise a research question: \textit{To what extent does stochastic rounding reshape the entire tail of the singular value spectrum, and does its regularization effect persist beyond extreme aspect ratio regimes?} In this work, we provide a substantially sharper and more general understanding of stochastic rounding as a spectral regularizer. We show that the regularization effect of SR is not restricted to pathological matrix geometries, but instead extends to matrices with constant aspect ratio. Moreover, we move beyond the smallest singular value and demonstrate that SR systematically regularizes clusters of singular values at the tail of the spectrum, revealing a richer and more structured form of implicit regularization than previously recognized.

\subsection{The Uniformly Dithered Quantizer and Stochastic Rounding}\label{sec:intro_quantizer}

Prior to formally stating our contributions, we define our notion of SR via the so-called Uniformly Dithered Quantizer. Indeed, SR admits several concrete instantiations. Throughout this work we focus on the \emph{uniformly dithered quantizer} as a canonical and widely used choice~\citep{saha2023matrix}. Our analysis extends to any SR scheme producing independent, unbiased, bounded entrywise noise. Fixing this instantiation lets us state quantitative bounds in terms of a single resolution parameter $\rho$.

We adopt the same definition as in Section~2.1 of~\citep{saha2023matrix} and define the quantizer for a scalar $x$. Given a dynamic range $R$ (with $|x| \le R$) and a bit-budget $B$, the quantizer uses $M = 2^B$ distinct quantization points $q_1, q_2, \ldots, q_M$, evenly spaced between $-R$ and $R$ with resolution 
\begin{equation}\label{eqn:pd1}
\rho = \frac{2R}{M-1}.   
\end{equation}
The quantization operation $Q_{\rho}(x)$ is stochastic: for a value $x \in [q_k, q_{k+1}]$, the output is determined probabilistically by
\begin{equation}\label{eqn:Q_rho}
    Q_{\rho}(x) =
    \begin{cases}
        q_{k+1} & \text{with probability } p = \frac{(x-q_k)}{\rho}, \\
        q_k     & \text{with probability } 1-p.
    \end{cases}
\end{equation}
The quantizer enjoys several properties: it is unbiased; its quantization error is bounded by the resolution $\rho$; the variance of the quantization error is bounded by $\nicefrac{\rho^2}{4}$; and the quantization error is sub-gaussian (see Appendix~\ref{app:quantizer} for details).

\subsection{Our contributions}

We summarize our two main contributions below. Let $\Ab \in \mathbb{R}^{n \times d}$, with $n > d$, be a matrix that is stochastically rounded (or quantized) by applying the Uniformly Dithered Quantizer of the previous section with resolution $\rho$ to form the matrix $\Abtil \in \mathbb{R}^{n \times d}$ as follows:
\begin{equation}
    \Abtil_{ij} = Q_{\rho}(\Ab_{ij}),\ \ \text{for all}\ i,j.
\end{equation}

Our \textit{first contribution} (Section~\ref{sec:smallsv}) is to resolve the main open problem of~\cite{dexter2025stochastic} by showing that the regularization effect of SR applies to matrices with constant aspect ratio.~\cite{dexter2025stochastic} established that SR increases the smallest singular value primarily in \textit{very tall-and-thin or short-and-fat} regimes. We prove that this phenomenon persists even when the number of rows and columns are of the same order, substantially broadening the applicability of SR-based regularization. Roughly speaking, we prove that the smallest singular value of $\Abtil$, denoted as $\sigma_{\min}(\Abtil)$, is, with high probability, at least\footnote{Up to lower order terms. Here $\rho$ is the resolution of the quantizer and $\nu$ is a parameter measuring the column-wise variance of $\Ab - \Abtil$, which effectively measures the amount of available stochasticity when $\Ab$ is rounded to form $\Abtil$. See Section~\ref{noise-model} for the exact definition of $\nu$.} 
\begin{equation*}
    \sigma_{\min}(\Abtil) \gtrsim \rho \sqrt{n \nu},
\end{equation*}
for $d \leq c_0\cdot n$, for some constant $c_0$.~\cite{dexter2025stochastic} proved that the same bound holds,\footnote{Up to lower order terms.} with comparably high probability, only when $d \ll n^{\nicefrac{1}{4}}$. Precisely, Corollary~4.2 of~\citet{dexter2025stochastic} requires $d = o\rbr{(\nicefrac{n}{\log n})^{\nicefrac{1}{4}}}$. At $n = 10^6$ this caps $d$ by $16$, enforcing an aspect ratio of at least $6\times 10^4$. Additionally, their lower order terms decay very slowly, and~\citet{dexter2025stochastic} themselves noted that $n > 10^{50}$ is required just to drop the lower order term factor below $\nicefrac{1}{2}$.

Our new bound demonstrates that stochastic rounding is a general-purpose spectral regularizer, rather than one that relies on extreme dimensional imbalances. See Theorem~\ref{thm:combined} and Corollary~\ref{cor:rectangular} for precise statements of our results and further comparisons with the bounds of~\cite{dexter2025stochastic}.

Our \textit{second contribution} (Section~\ref{sec:smallcluster}) moves beyond the focus on the smallest singular value and shows that stochastic rounding regularizes entire clusters of small singular values. Specifically, we establish that SR \textit{lifts} not just the minimum singular value, but a non-trivial portion of the spectral tail. This reveals that SR induces a spectral tail form of regularization, affecting the geometry of the low-energy subspace of the matrix rather than a single direction. This contribution provides a more accurate description of how stochastic rounding interacts with high-dimensional data and model representations. More precisely, suppose $\Ab$ has a large spectral gap at index $k$, i.e., $\sigma_k (\Ab) \gg \sigma_{k+1}(\Ab)$. We analyze the \textbf{expected excess tail energy} in the spectrum of $\Abtil$, namely
\begin{equation}\label{eqn:Tdef}
\Tcal := \mathbb{E}\left[\sum_{i = k+1}^{d} \sigma_i(\Abtil)^2 \right] - \sum_{i = k+1}^{d} \sigma_i(\Ab)^2.   
\end{equation}
Here $\sigma_i(\cdot)$ denotes the $i$-th singular value of the respective matrix. Theorem~\ref{thm:smallcluster-main} and Corollary~\ref{cor:smallcluster} show that, excluding lower order terms, the \textbf{expected excess tail energy} is lower bounded by the smallest $(d-k)$ column variances of the noise matrix $\Abtil-\Ab$. Our analysis uses resolvents and contour integration to isolate the relevant set of singular values of the matrix, as well as other tools from random matrix theory and high-dimensional probability. An alternative interpretation of our result is that SR injects noise that is diffused across directions, preventing concentration in low-dimensional subspaces and thereby lifting not just a single singular value, but an entire portion of the spectrum. 

We conclude by highlighting connections between our results and the AI/ML literature. (See also Appendix~\ref{sxn:priorwork} for a discussion of relevant prior work on SR.) Recent work has explained that the spectral structure of weight and embedding matrices plays a central role in optimization and generalization in modern machine learning systems, particularly through the emergence of heavy-tailed spectra and implicit self-regularization phenomena~\citep{mahoney2019traditional,martin2021implicit,martin2021predicting}. Our results suggest that stochastic rounding can actively reshape this structure, providing a new mechanism for influencing spectral geometry. While quantization noise is typically viewed as harmful, we show that stochastic rounding introduces structured noise that can improve spectral properties of matrices, in contrast to conventional deterministic quantization schemes~\citep{gholami2022survey,jin2024comprehensive}. Our findings move towards providing a theoretical foundation for the empirical success of stochastic rounding in low-precision training~\citep{ozkara2025stochastic,kwun2025lotion,zhaodirect}.

\section{Background}

\subsection{Notation}

Let $[k]$ denote the set of positive integers ${1, 2, \cdots, k}$. Let $\mathrm{i}$ denote the imaginary unit. Scalars are denoted by lowercase letters (e.g., $x$, $y$), vectors by bold lowercase letters (e.g., $\xb$, $\yb$), and matrices by bold uppercase letters (e.g., $\Ab$, $\Bb$). We use $\Ib_k$ to denote the $k \times k$ identity matrix (or simply $\Ib$ when the dimension is clear from context). For a matrix $\Ab \in \mathbb{R}^{n \times d}$ with $n > d$, we denote its column space (range) by $\mathcal{R}(\Ab)$. We use $\Pb_{\Ab}$ to denote the orthogonal projector onto the column space of $\Ab$. We write $\twonorm{\Ab}$ for the spectral norm and $\|\Ab\|_F$ for the Frobenius norm.
Let $\Ab \in \mathbb{R}^{n \times d}$ with $n>d$ have rank $r$. We denote its thin Singular Value Decomposition (SVD) by
$\Ab = \Ub \Sigmab \Vb^\top,$
where $\Ub \in \mathbb{R}^{n \times r}$, $\Sigmab \in \mathbb{R}^{r \times r}$, and $\Vb \in \mathbb{R}^{d \times r}$. We use $\ub_i$ (respectively, $\vb_i$) to denote the $i$-th left (respectively, right) singular vector of $\Ab$, and $\sigma_i$ to denote its $i$-th singular value. Similarly, for the perturbed matrix
$\Abtil = \Ab + \Eb$
(with $\Eb \in \mathbb{R}^{n \times d}$), we denote its thin SVD by $\Abtil = \Ubtil \Tilde{\Sigmab} \Vbtil^\top$. Again, $\ubtil_i$ (respectively, $\vbtil_i$) denotes the $i$-th left (respectively, right) singular vector of $\Abtil$, and $\tilde{\sigma}_i$ denotes its $i$-th singular value. We will focus on the asymptotic scaling of terms rather than their exact magnitudes. Therefore, we do not keep track of constants closely; absolute constants such as $C, c_1, c_2,$ etc., may be absorbed or change value from line to line without explicit mention.

\subsection{Noise Model and Perturbation Structure}\label{noise-model}
Applying $Q_{\rho}$~\eqref{eqn:Q_rho} entrywise to $\Ab$ yields the stochastically rounded matrix
\begin{equation}
    \Abtil = Q_{\rho} (\Ab) = \Ab + \Eb,
    \label{eq:SR-model}
\end{equation}
where the entries $\Eb_{ij}$ satisfy several properties: They are independent across $(i,j)$; they have zero mean; they are bounded in absolute value by $\rho$; their variance is at most $\nicefrac{\rho^2}{4}$; and they are sub-gaussian. See properties~\eqref{prop:quant-unbiased}--\eqref{prop:quant-subg} in Appendix~\ref{app:quantizer} for details.

\paragraph{Column variance matrix.} Because the entries of $\Eb$ are centered and independent, the expected Gram of $\Eb$ is diagonal:
$\Db := \Ex{\Eb^\top \Eb} = \Diag(\nu_1, \dots, \nu_d)$, where $\nu_j := \sum_{i=1}^{n} \Var(\Eb_{ij})$.
The quantity $\nu_j$ is the \emph{column variance} of column $j$. In particular, $\nu_j \le n\rho^2/4$ by~\eqref{prop:quant-var}. We denote the decreasing
rearrangement of $\nu_1,\dots,\nu_d$ as $\nu_1^\downarrow \ge \cdots \ge \nu_d^\downarrow$.

\paragraph{Normalized variance parameters.} Our analysis of $\sigma_d(\Abtil)$ will invoke the following normalized variance quantities, rescaled by the resolution $\rho$:
\begin{equation}\label{def:nu}
    \nu := \frac{1}{n\rho^2}\min_{1\le j\le d}\{\nu_j\},\quad
    \bar\nu := \frac{1}{n\rho^2}\max_{1\le j\le d}\{\nu_j\},\quad
    \mu := \frac{1}{\rho^2}\max_{1\le i\le n}\sum_{j=1}^{d} \Var(\Eb_{ij}),\quad \kappa := \max\cbr{\mu,\; n\bar\nu}.
\end{equation}

\paragraph{Gram matrices and the perturbation.} We denote the Gram matrices of $\Ab$ and $\Abtil$ by $\Gb := \Ab^\top \Ab$ and $\Gbtil := \Abtil^\top \Abtil$, and the corresponding perturbation by
\begin{equation*}
    \Deltab := \Gbtil - \Gb = \Ab^\top \Eb + \Eb^\top \Ab + \Eb^\top \Eb.
\end{equation*}
Since the entries of $\Eb$ are centered, $\Ex{\Ab^\top \Eb} = \Ex{\Eb^\top \Ab} = 0$, and hence
\begin{equation*}
    \Ex{\Deltab} = \Ex{\Eb^\top \Eb} = \Db = \Diag(\nu_1, \dots, \nu_d).
\end{equation*}
We denote the $i$-th largest eigenvalue of $\Gb$ as $\lambda_i = \sigma_i^2(\Ab)$ and of $\Gbtil$ as $\tilde{\lambda}_i = \sigma_i^2(\Abtil)$.

\subsection{A Lower Bound on $\sigma_d(\Eb)$}\label{subsec:bvh}
A key ingredient for our analysis of $\sigma_d(\Abtil)$ is a sharp\footnote{Up to logarithmic terms.} lower bound on the smallest singular value of a noise matrix with independent, centered, bounded, nonhomogeneous entries, due to~\citep[Corollary~3.18]{brailovskaya2024universality}. We restate it here, specialized to our setting.

\begin{theorem}\label{prop:BVH}
Let $\Eb$ be as in equation~\eqref{eq:SR-model} and normalized variance parameters $\nu, \mu, \kappa$ be as before. Assume $\kappa \ge \log n$. Then for every $a > 0$ there exists a constant $C_a > 0$, depending only on $a$, such that
\begin{equation*}
    \pr{\sigma_d(\Eb) \ge \rho\rbr{\sqrt{n\nu} - \sqrt{\mu} - C_a \kappa^{1/3}(\log n)^{2/3}}} \ge 1 - n^{-a}.
\end{equation*}
\end{theorem}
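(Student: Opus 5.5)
This is a specialization of \citep[Corollary~3.18]{brailovskaya2024universality}, so the plan is to check its hypotheses for our $\Eb$ and then translate its parameters into $\nu,\mu,\kappa$. That corollary concerns a random matrix $\Xb=\sum_{i,j}\Eb_{ij}\,\eb_i\eb_j^\top$ with independent, centered, bounded entries. It bounds $s_{\min}(\Xb)$ from below by $\sqrt{\text{min column variance}}-\sqrt{\text{max row variance}}$, minus a universality error of order $\sigma_*^{1/3}\sigma^{2/3}(\log n)^{2/3}+R(\log n)$-type terms, with probability $1-n^{-a}$. I would work with $\Eb/\rho$ throughout, because all our normalized quantities are scale free, and multiply the conclusion by $\rho$ at the end.

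\textbf{Matching hypotheses.} By properties \eqref{prop:quant-unbiased}--\eqref{prop:quant-subg}, the entries of $\Eb/\rho$ are independent, centered and bounded by $1$, so the uniform bound $R\le 1$. The column variances of $\Eb/\rho$ are $\nu_j/\rho^2$, and their minimum is $n\nu$. Hence the Gaussian-model lower bound $\sigma_d\ge\sqrt{\min_j\nu_j/\rho^2}-\sqrt{\max_i\sum_j\Var(\Eb_{ij})/\rho^2}$ reads $\sqrt{n\nu}-\sqrt{\mu}$. This is the Bandeira–van Handel-type bound for the Gaussian analogue, which the corollary transfers to the bounded case.

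\textbf{Controlling the error term.} The spectral-norm variance parameter is $\sigma(\Eb/\rho)^2=\max\{\|\Ex{\Eb^\top\Eb}\|,\|\Ex{\Eb\Eb^\top}\|\}/\rho^2$, which is the maximum of the largest column variance and the largest row variance, that is, $\max\{n\bar\nu,\mu\}=\kappa$. The entrywise parameter satisfies $\sigma_*\le 1/2$ and $R\le1$. Substituting these into the error term gives something of order $\kappa^{1/3}(\log n)^{2/3}+(\log n)$ up to constants depending on $a$. The assumption $\kappa\ge\log n$ then absorbs the $R\log n$ term, since $\log n=(\log n)^{1/3}(\log n)^{2/3}\le\kappa^{1/3}(\log n)^{2/3}$. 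It also absorbs any $\sqrt{\log n}$ terms, leaving exactly $C_a\kappa^{1/3}(\log n)^{2/3}$.

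\textbf{Main obstacle.} The step I expect to need the most care is the precise bookkeeping of the corollary's error term. I must confirm that its exponents are $\sigma^{1/3}R^{2/3}(\log n)^{2/3}$ (or equivalent) and not something worse, and that the dimension in the log can be taken as $n\ge d$. I would handle this by writing out the corollary's statement, plugging in $R=1$ and $\sigma^2=\kappa$, and using $\kappa\ge\log n$ to merge lower-order terms. If the corollary is stated for square or symmetric matrices, I would apply it to the Hermitian dilation of $\Eb/\rho$, which is of size $(n+d)\le 2n$, and absorb the change from $\log(2n)$ to $\log n$ into $C_a$. Multiplying by $\rho$ then gives the stated probability bound.
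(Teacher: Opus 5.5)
Your proposal is correct and matches the paper's argument: rescale to $\Eb/\rho$ (independent, centered, entries bounded by $1$), read off the minimum column variance $n\nu$, the maximum row variance $\mu$ and the matrix variance parameter $\sigma^2=\kappa$, then apply Corollary~3.18 of Brailovskaya--van Handel, using $\kappa\ge\log n$ to absorb the $\sqrt{\log n}$ and $\log n$ terms, a step the paper leaves implicit. One labeling fix: the universality rate there is $R^{1/3}\sigma^{2/3}t^{2/3}$ (not $\sigma_*^{1/3}\sigma^{2/3}$ or $\sigma^{1/3}R^{2/3}$), and with $R\le 1$, $\sigma^2=\kappa$ and $t\asymp_a\log n$ this is exactly the $\kappa^{1/3}(\log n)^{2/3}$ you arrive at.
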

\begin{proof}
Set $\Yb := \Eb/\rho$. Then $\Yb \in \R^{n \times d}$ has independent centered entries with $\abr{\Yb_{ij}} \le 1$. Applying~\citep[Corollary~3.18]{brailovskaya2024universality} to $\Yb$ yields the claim. See Appendix~\ref{app:BVH_proof} for details.
\end{proof}

\subsection{Resolvent and Contour Integration}\label{subsec:resolvent}
A central tool in our analysis of the small-cluster regularization is the classical contour-integral representation of spectral projectors, which was recently utilized to derive sharp matrix perturbation bounds~\citep{tran2026davis,tran2026eigenvaluestabilitynewperturbation,tran2026newmatrixperturbationbounds,tran2024matrixperturbationboundscontour,transpectral,tran2025new}. We record the essential pieces here; the cluster-bound argument later applies them to the Gram matrices $\Gb$ and $\Gbtil$ of Section~\ref{noise-model}. For simplicity, assume that $\Ab$ and $\Abtil$ both have full column rank.

\paragraph{Resolvent.} For $z \in \mathbb{C}$ outside the spectrum of $\Gb$, the \emph{resolvent} of $\Gb$ is $\Rb_\Gb(z) := (z\Ib - \Gb)^{-1}.$
Writing the eigendecomposition of the Gram matrix as $\Gb = \Vb\Lambdab\Vb^\top$ with $\Lambdab = \Diag(\lambda_1,\dots,\lambda_d)$ and $\Vb = [\vb_1,\dots,\vb_d]$, the resolvent admits the spectral decomposition
\begin{equation} \label{eq:resolvent_decomp}
    \Rb_\Gb(z) = \sum_{i=1}^{d}\frac{\vb_i\vb_i^\top}{z-\lambda_i}, \qquad \twonorm{\Rb_\Gb(z)} = \frac{1}{\dist(z,\,\operatorname{spec}(\Gb))}.
\end{equation}

\paragraph{Neumann expansion for the perturbed resolvent.} For $\Gbtil = \Gb + \Deltab$, the factorization $z\Ib - \Gbtil = (z\Ib - \Gb)(\Ib - \Rb_\Gb(z)\Deltab)$ gives $\Rb_{\Gbtil}(z) = (\Ib - \Rb_\Gb(z)\Deltab)^{-1}\Rb_\Gb(z)$. Whenever $\twonorm{\Rb_\Gb(z)\Deltab} < 1$, this unrolls into the Neumann series
\begin{equation}\label{eq:neumann}
    \Rb_{\Gbtil}(z) = \sum_{\ell=0}^{\infty}\rbr{\Rb_\Gb(z)\Deltab}^\ell \Rb_\Gb(z).
\end{equation}

\paragraph{Spectral projectors via contour integrals.} Suppose $\Gb$ has a spectral gap at index $k$, i.e., $\lambda_k > \lambda_{k+1}$, and write $g := \lambda_k - \lambda_{k+1} > 0$. Let $\Gamma \subset \mathbb{C}$ be a positively oriented closed contour that encloses $\lambda_1,\dots,\lambda_k$ and excludes $\lambda_{k+1},\dots,\lambda_d$. For any function $f$ holomorphic inside $\Gamma$, Cauchy's integral formula applied term-wise to the spectral decomposition of $\Rb_\Gb(z)$ yields
\begin{equation}\label{eq:contour-projector}
    \frac{1}{2\pi \mathrm{i}}\oint_\Gamma f(z)\,\Rb_\Gb(z)\,dz = \sum_{\ell=1}^{k} f(\lambda_\ell)\,\vb_\ell\vb_\ell^\top.
\end{equation}
Choosing $f(z) = 1$ recovers the orthogonal projector onto the top-$k$ eigenspace of $\Gb$, while $f(z) = z$ and taking traces yields
\begin{equation} \label{eq:trace_contour}
    \sum_{i=1}^{k}\sigma_i(\Ab)^2 \;=\; \sum_{i=1}^{k}\lambda_i \;=\; \tr\rbr{\sum_{i=1}^k \lambda_i \vb_i \vb_i^\ts} \;=\; \frac{1}{2\pi \mathrm{i}}\oint_\Gamma z\,\tr\rbr{\Rb_\Gb(z)}\,dz,
\end{equation}
which expresses the top-$k$ energy as a contour integral of the resolvent. The analogous formula holds for $\Gbtil$ whenever $\Gamma$ still separates its top-$k$ eigenvalues from the rest, so combining it with~\eqref{eq:neumann} represents the energy shift on the top-$k$ block as a convergent series in $\Deltab$.
\section{Regularization of the Smallest Singular Value} \label{sec:smallsv}

In this section, we present and sketch the proof of our first main contribution: A high-probability lower bound on $\sigma_d(\Abtil)$ for stochastically rounded matrices that only requires a constant aspect ratio.

Similar to~\cite{dexter2025stochastic}, we start from the inequality 
\begin{equation}\label{eq:sigma-decomp}
    \sigma_d(\Abtil) \ge\; \sigma_d(\Eb) - \twonorm{\Pb_\Ab \Eb},
\end{equation}
where $\Pb_\Ab$ is the orthogonal projector onto $\mathcal{R}(\Ab)$. We then proceed to bound the two terms on the right separately. We significantly improve the result of \eqref{eq:dexter-bound} by sharpening the lower bound on $\sigma_d(\Eb)$ and the upper bound on $\twonorm{\Pb_\Ab \Eb}$. Specifically, the Brailovskaya--van Handel bound (Theorem~\ref{prop:BVH}) already controls $\sigma_d(\Eb)$ from below; it remains to tightly control the projection $\twonorm{\Pb_\Ab \Eb}$ from above. Throughout this section, we set $r$ to be the rank of $\Ab$ and state the projection bound in terms of $r$ rather than $d$, so that the result remains sharp when $\Ab$ is rank-deficient.

%

The following lemma gives a net-based bound on the $\twonorm{\Pb_\Ab \Eb}$. We sketch its proof below. For details, see Appendix~\ref{app:PAE}.

\begin{lemma}\label{lem:PAE}
Let $\Ab, \Eb$ be as in equation~\eqref{eq:SR-model} and let $r = \rank(\Ab)$. There exist absolute constants $C_0, c_0 > 0$ such that, for every $u \ge 0$,
\begin{equation}\label{eq:PAE-bound}
    \pr{\twonorm{\Pb_\Ab \Eb} > C_0\,\rho\rbr{\sqrt{d+r} + u}} \le 2 e^{-c_0 u^2}.
\end{equation}
\end{lemma}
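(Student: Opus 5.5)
We plan a direct $\varepsilon$-net argument on the bilinear form defining the spectral norm. Write $\Pb_\Ab = \Ub\Ub^\top$ with $\Ub \in \mathbb{R}^{n\times r}$ having orthonormal columns spanning $\mathcal{R}(\Ab)$. Since $\Ub$ is an isometry on its range, $\twonorm{\Pb_\Ab\Eb} = \twonorm{\Ub^\top\Eb}$, an $r\times d$ matrix. Thus
\begin{equation*}
\twonorm{\Pb_\Ab\Eb} = \sup_{\xb\in S^{r-1},\,\yb\in S^{d-1}} \xb^\top\Ub^\top\Eb\,\yb ,
\end{equation*}
and the whole problem reduces to controlling a supremum over a product of spheres of dimensions $r$ and $d$. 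This is exactly where the $\sqrt{d+r}$ should come from.

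For fixed unit vectors $\xb\in S^{r-1}$ and $\yb\in S^{d-1}$, set $\wb := \Ub\xb\in S^{n-1}$. Then
\begin{equation*}
\xb^\top\Ub^\top\Eb\,\yb = \sum_{i,j}\wb_i\yb_j\Eb_{ij}.
\end{equation*}
This is a sum of independent, centered random variables, each bounded by $|\wb_i\yb_j|\rho$. The coefficients satisfy $\sum_{i,j}\wb_i^2\yb_j^2 = 1$. By Hoeffding's inequality (or the sub-gaussian property of the entries recorded in the Appendix), this variable is sub-gaussian with $\psi_2$-norm at most $C\rho$. Hence, for every $t\ge0$,
\begin{equation*}
\pr{|\xb^\top\Ub^\top\Eb\yb|>t}\le 2e^{-ct^2/\rho^2}.
\end{equation*}
The key point is that the bound depends only on $\|\wb\|_2\|\yb\|_2=1$, not on $n$. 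The orthonormality of $\Ub$ is what kills the dependence on the ambient dimension.

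Next, take $1/4$-nets $\mathcal{N}_r\subset S^{r-1}$ and $\mathcal{N}_d\subset S^{d-1}$ of sizes at most $9^r$ and $9^d$. The standard approximation argument gives
\begin{equation*}
\twonorm{\Ub^\top\Eb}\le 2\max_{\xb\in\mathcal{N}_r,\,\yb\in\mathcal{N}_d}\xb^\top\Ub^\top\Eb\yb .
\end{equation*}
A union bound over the at most $9^{r+d}$ pairs, with $t = C\rho(\sqrt{d+r}+u)$, bounds the failure probability by $9^{r+d}\cdot 2\exp(-c C^2(d+r+u^2))$. Choosing $C_0$ large enough that $cC^2\ge \log 9+1$ absorbs the entropy factor and leaves $2e^{-c_0u^2}$, which is \eqref{eq:PAE-bound}.

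The main obstacle, mild here, is the fixed-pair tail bound. One must make sure the sub-gaussian constant is uniform: it should use only boundedness $|\Eb_{ij}|\le\rho$ and independence, with no assumption on identical distributions. The constant should not deteriorate when $\Ub$ is rank-deficient, which is handled by working with $r$ instead of $d$ from the start. Everything else is bookkeeping of absolute constants.
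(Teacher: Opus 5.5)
Your proposal is correct and follows essentially the same route as the paper. You reduce to $\|\Ub^\top\Eb\|_2$, get a dimension-free sub-gaussian tail for each fixed bilinear form via Hoeffding using only independence and $|\Eb_{ij}|\le\rho$, and union bound over $1/4$-nets of sizes $9^r$ and $9^d$ with $t = C\rho(\sqrt{d+r}+u)$. The only cosmetic difference is that the paper applies Hoeffding's lemma in two stages (first within each column to form $\xi_j$, then across columns), whereas you apply it once to the double sum.
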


\begin{proof}
Let $\Ub \in \R^{n \times r}$ have orthonormal columns spanning $\mathcal{R}(\Ab)$, so that $\Pb_\Ab = \Ub\Ub^\top$ and $\twonorm{\Pb_\Ab \Eb} = \twonorm{\Ub^\top \Eb}$. Set $\Yb := \Ub^\top \Eb \in \R^{r \times d}$. For fixed $\xb \in \mathbb{S}^{d-1}$ and $\yb \in \mathbb{S}^{r-1}$, the scalar $\yb^\top \Yb \xb = (\Ub\yb)^\top \Eb \xb$ is a weighted sum of independent centered bounded entries; Hoeffding's lemma yields the sub-gaussian moment-generating bound $\Ex{\exp(\lambda \yb^\top \Yb \xb)} \le \exp(\lambda^2 \rho^2/2)$, and the Chernoff bound gives $\pr{\abr{\yb^\top \Yb \xb} > t} \le 2 e^{-t^2/(2\rho^2)}$. Choosing $1/4$-nets of $\mathbb{S}^{d-1}$ and $\mathbb{S}^{r-1}$ of cardinality at most $9^d$ and $9^r$~\citep[Corollary~4.2.11]{Vershynin_2026}, the two-net lemma~\citep[Lemma~4.4.2]{Vershynin_2026} combined with a union bound over the at most $9^{d+r}$ pairs and the choice $t = C\rho(\sqrt{d+r}+u)$ (with $C$ large enough) yields~\eqref{eq:PAE-bound}. 
\end{proof} 

Next, we derive our main theorem, lower-bounding $\sigma_d(\Abtil)$ by combining \eqref{eq:sigma-decomp} with Theorem~\ref{prop:BVH} and Lemma~\ref{lem:PAE} via a union bound.
\begin{theorem}\label{thm:combined}
Under the noise model of Section~\ref{noise-model}, suppose $\Ab \in \R^{n \times d}$ has rank $r$ and $\kappa \ge \log n$. Then for every $a > 0$ there exist a constant $C_a > 0$ depending only on $a$, an absolute constant $C_1 > 0$, and an absolute constant $c_1 > 0$ such that, for every $u \ge 0$,
\begin{equation}\label{eq:combined-bound}
    \pr{\sigma_d(\Abtil) \ge \rho\rbr{\sqrt{n\nu} - \sqrt{\mu} - C_a\,\kappa^{1/3}(\log n)^{2/3} - C_1\rbr{\sqrt{d+r}+u}}} \ge 1 - n^{-a} - 2 e^{-c_1 u^2}.
\end{equation}
\end{theorem}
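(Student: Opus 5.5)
The proof combines the deterministic inequality \eqref{eq:sigma-decomp} with the two probabilistic bounds already available, Theorem~\ref{prop:BVH} and Lemma~\ref{lem:PAE}, via a union bound.

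\textbf{Step 1: justify \eqref{eq:sigma-decomp}.} Write $\Abtil = \Ab + \Eb$. For any unit $\xb \in \mathbb{S}^{d-1}$, decompose $\Abtil\xb$ orthogonally along $\mathcal{R}(\Ab)$ and its complement. This gives
\begin{equation*}
\|\Abtil\xb\|_2 \ge \|(\Ib - \Pb_\Ab)\Abtil\xb\|_2 = \|(\Ib-\Pb_\Ab)\Eb\xb\|_2 \ge \|\Eb\xb\|_2 - \|\Pb_\Ab\Eb\xb\|_2 .
\end{equation*}
The right-hand side is at least $\sigma_d(\Eb) - \twonorm{\Pb_\Ab\Eb}$. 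Minimizing over $\xb$ yields $\sigma_d(\Abtil) \ge \sigma_d(\Eb) - \twonorm{\Pb_\Ab\Eb}$. This step is purely deterministic and holds for every realization of $\Eb$.

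\textbf{Step 2: define the two good events.} Let $\mathcal{E}_1$ be the event
\begin{equation*}
\sigma_d(\Eb) \ge \rho\rbr{\sqrt{n\nu} - \sqrt{\mu} - C_a\kappa^{1/3}(\log n)^{2/3}}.
\end{equation*}
Since $\kappa \ge \log n$ is assumed, Theorem~\ref{prop:BVH} gives $\pr{\mathcal{E}_1^c} \le n^{-a}$. Let $\mathcal{E}_2$ be the event $\twonorm{\Pb_\Ab\Eb} \le C_0\rho\rbr{\sqrt{d+r}+u}$. Lemma~\ref{lem:PAE} gives $\pr{\mathcal{E}_2^c} \le 2e^{-c_0u^2}$.

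\textbf{Step 3: combine.} The two events involve the same $\Eb$ and need not be independent, so independence is not used. By the union bound, $\pr{\mathcal{E}_1\cap\mathcal{E}_2} \ge 1 - n^{-a} - 2e^{-c_0u^2}$. On $\mathcal{E}_1\cap\mathcal{E}_2$, Step 1 gives the bound \eqref{eq:combined-bound} with $C_1 := C_0$ and $c_1 := c_0$.

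\textbf{Main subtlety.} The only point needing care is whether the constants have the dependence the statement claims. $C_a$ comes from Theorem~\ref{prop:BVH} and depends only on $a$. $C_0$ and $c_0$ come from Lemma~\ref{lem:PAE} and are absolute. So $C_1$ and $c_1$ are absolute, as required.

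The rank-deficient case $r<d$ needs no extra argument: Lemma~\ref{lem:PAE} is already stated in terms of $r$, and Step 1 works for any subspace $\mathcal{R}(\Ab)$. When the bound inside the parentheses is negative, the claim is trivially true.
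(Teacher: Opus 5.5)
Your proposal is correct and follows the paper's proof essentially verbatim. It combines the deterministic inequality \eqref{eq:sigma-decomp} with Theorem~\ref{prop:BVH} and Lemma~\ref{lem:PAE} on two events and takes a union bound with $C_1 := C_0$, $c_1 := c_0$; your Step~1 additionally gives a valid projection argument for \eqref{eq:sigma-decomp}, which the paper simply takes from prior work.
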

\begin{proof}
By Theorem~\ref{prop:BVH}, on an event $\Ecal_1$ with $\pr{\Ecal_1} \ge 1 - n^{-a}$,
\begin{equation*}
    \sigma_d(\Eb) \ge \rho\rbr{\sqrt{n\nu} - \sqrt{\mu} - C_a\,\kappa^{1/3}(\log n)^{2/3}}.
\end{equation*}
By Lemma~\ref{lem:PAE}, on an event $\Ecal_2$ with $\pr{\Ecal_2} \ge 1 - 2 e^{-c_1 u^2}$,
\begin{equation*}
    \twonorm{\Pb_\Ab \Eb} \le C_1\,\rho\rbr{\sqrt{d+r} + u},
\end{equation*}
where $C_1 := C_0$ and $c_1 := c_0$ are the constants from Lemma~\ref{lem:PAE}. On $\Ecal_1 \cap \Ecal_2$,~\eqref{eq:sigma-decomp} gives
\begin{equation*}
    \sigma_d(\Abtil) \ge \sigma_d(\Eb) - \twonorm{\Pb_\Ab \Eb} \ge \rho\rbr{\sqrt{n\nu} - \sqrt{\mu} - C_a \kappa^{1/3}(\log n)^{2/3} - C_1\rbr{\sqrt{d+r}+u}}.
\end{equation*}
A union bound yields $\pr{\Ecal_1 \cap \Ecal_2} \ge 1 - n^{-a} - 2 e^{-c_1 u^2}$, establishing~\eqref{eq:combined-bound}.
\end{proof}

Since $r \le d$, the bound~\eqref{eq:combined-bound} immediately implies a slightly cruder but rank-free version with $\sqrt{d+r}$ replaced by $\sqrt{2d}$. Setting $u = c_1^{-1/2}\sqrt{a\log n}$ in the resulting inequality and imposing a mild lower bound on $\nu$ together with a constant aspect-ratio constraint $d \le \eta_0 n$ yields the following headline result (see also Appendix~\ref{app:rectangular}).
\begin{corollary}\label{cor:rectangular}
Let $\Ab \in \R^{n \times d}$. Fix $a > 0$ and $\nu_0 > 0$ and let $\eta_0 = \eta_0(a,\nu_0) > 0$, $c_0 = c_0(\nu_0) > 0$, and $n_0 = n_0(a,\nu_0) \in \NN$ be constants. Suppose $\nu \ge \nu_0$, $d \le \eta_0 n$, and  $n \ge n_0$. Then
\begin{equation}\label{eq:rectangular-bound}
    \pr{\sigma_d(\Abtil) \ge c_0\,\rho\sqrt{n}} \ge 1 - 3 n^{-a}.
\end{equation}
\end{corollary}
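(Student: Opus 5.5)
The plan is to specialize Theorem~\ref{thm:combined} with $u := c_1^{-1/2}\sqrt{a\log n}$, so that $2e^{-c_1u^2} = 2n^{-a}$ and the failure probability is at most $n^{-a}+2n^{-a}=3n^{-a}$. Using $r\le d$, we replace $\sqrt{d+r}$ by $\sqrt{2d}$. It then remains to (i) verify the hypothesis $\kappa\ge\log n$, and (ii) show that under $\nu\ge\nu_0$, $d\le\eta_0 n$ and $n\ge n_0$ the bracket in~\eqref{eq:combined-bound} is at least $c_0\sqrt n$, with $c_0 := \sqrt{\nu_0}/2$.

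For (i), note that $\kappa\ge n\bar\nu\ge n\nu\ge \nu_0 n$, which exceeds $\log n$ once $n\ge n_0$ for suitable $n_0(\nu_0)$. For (ii), we bound each negative term using only the variance property $\Var(\Eb_{ij})\le\rho^2/4$:
\begin{itemize}
\item $\mu\le d/4\le \eta_0 n/4$, so $\sqrt\mu\le \tfrac12\sqrt{\eta_0}\sqrt n$;
\item $n\bar\nu\le n/4$ and $\mu\le d/4\le n/4$, so $\kappa\le n/4$ and hence $C_a\kappa^{1/3}(\log n)^{2/3}\le C_a n^{1/3}(\log n)^{2/3}$, which is $o(\sqrt n)$;
\item $C_1\sqrt{2d}\le C_1\sqrt{2\eta_0}\sqrt n$;
\item $C_1 u = C_1 c_1^{-1/2}\sqrt{a\log n} = o(\sqrt n)$.
\end{itemize}
The leading term satisfies $\sqrt{n\nu}\ge\sqrt{\nu_0}\sqrt n$.

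Next we fix the constants. Choose $\eta_0$ so that $\tfrac12\sqrt{\eta_0}+C_1\sqrt{2\eta_0}\le \sqrt{\nu_0}/4$; this depends only on $\nu_0$ and absolute constants, which is consistent with the allowed dependence $\eta_0(a,\nu_0)$. Then choose $n_0(a,\nu_0)$ large enough that three conditions hold for all $n\ge n_0$: $\nu_0 n\ge \log n$, and
\[
C_a n^{1/3}(\log n)^{2/3} + C_1 c_1^{-1/2}\sqrt{a\log n}\le \tfrac{\sqrt{\nu_0}}{4}\sqrt n .
\]
With these choices the bracket in~\eqref{eq:combined-bound} is at least $\sqrt{\nu_0 n}-\tfrac14\sqrt{\nu_0 n}-\tfrac14\sqrt{\nu_0 n}=\tfrac12\sqrt{\nu_0}\sqrt n$. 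This gives~\eqref{eq:rectangular-bound}.

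The argument is bookkeeping; the only delicate point is keeping the constants honest. The key structural fact is the \emph{upper} bound $\kappa\le n/4$: it is what makes the Brailovskaya--van Handel error term $\kappa^{1/3}(\log n)^{2/3}$ genuinely lower order than $\sqrt n$, rather than merely finite. We also need the order of choices to be correct: first fix $a$ (which fixes $C_a$), then $\eta_0$ from $\nu_0$ and $C_1$, then $n_0$. This order ensures that $c_0$ depends only on $\nu_0$, as the statement requires.
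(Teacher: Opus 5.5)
Your proposal is correct and follows essentially the same route as the paper: specialize Theorem~\ref{thm:combined} with $u = c_1^{-1/2}\sqrt{a\log n}$ and $\sqrt{d+r}\le\sqrt{2d}$. You then bound $\sqrt{\mu}$, $\kappa^{1/3}(\log n)^{2/3}$ (via $\kappa\le n/4$), $\sqrt{d}$ and $\sqrt{\log n}$ against $\sqrt{n\nu_0}$, and fix $\eta_0$ before $n_0$. The only difference is how you split the $\sqrt{\nu_0}$ budget, which gives you $c_0=\sqrt{\nu_0}/2$ instead of the paper's $\sqrt{\nu_0}/4$; this is immaterial.
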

In contrast to the $d = o\rbr{(n/\log n)^{1/4}}$ requirement of~\citet{dexter2025stochastic}, Corollary~\ref{cor:rectangular} accommodates any constant aspect ratio $d/n \le \eta_0$, thus covering slightly rectangular matrices that dominate numerical linear algebra workloads. We do emphasize that the regime where $d\approx n$ is still not covered by our bounds.

\section{Regularization of a Small Singular Value Cluster}\label{sec:smallcluster}

We now turn to our second main contribution: SR regularizes not just the smallest singular value, but a small cluster of singular values at the tail of the spectrum. Throughout this section we work in the setup of Section~\ref{noise-model} and assume that $\Ab$ has a large spectral gap at index $k$, $ \sigma_k(\Ab) \gg \sigma_{k+1}(\Ab),$ or, equivalently in terms of the Gram-matrix eigenvalues, $\lambda_k \gg \lambda_{k+1}$. We denote the spectral gap by $g := \lambda_k - \lambda_{k+1}$. Throughout this section, we assume $\Ab$ and $\Abtil$ both have full column rank. The quantity of interest is the \textbf{expected excess tail energy} $\Tcal$, as defined in~\eqref{eqn:Tdef}.

Our main results in this section are Theorem~\ref{thm:smallcluster-main} and Corollary~\ref{cor:smallcluster}. We present them at the end of the section, after outlining their proofs, as the additional notation and intuition developed in the proof sketches will help the reader better interpret and appreciate the results.

The proof of Theorem~\ref{thm:smallcluster-main} (from which Corollary~\ref{cor:smallcluster} follows) consists of four main steps.
\begin{enumerate}
    \item An expected energy identity for the entire spectrum (Theorem~\ref{thm:frobenius-energy}) reduces a lower bound on $\Tcal$ to an upper bound on the expected top-$k$ singular-value energy shift (see~\eqref{eq:tail-top-identity}).
    \item Cauchy's integral formula along a contour $\Gamma$ separating $\lambda_1,\ldots,\lambda_k$ from the rest of the spectrum, combined with the Neumann expansion of the perturbed resolvent, writes the expected top-$k$ shift as a series of contour integrals $\sum_{\ell\ge 1} I_\ell$ (Lemma~\ref{lem:topk-series}).
    \item The leading terms $I_1$ and $I_2$ are computed exactly via residue calculus and then upper bounded via concentration inequalities (Lemmas~\ref{lem:I1} and~\ref{lem:I2}); together with Step~1, $I_1$ produces the leading term $\sum_{j > k}\nu_j^\downarrow$ in the bound.
    \item The remainder term $\sum_{\ell\ge 3} I_\ell$ is bounded via triangle inequalities for contour integrals, where we use the high-probability bound on $\twonorm{\Deltab}$ from Lemma~\ref{lem:Delta-opnorm} to bound the integrand.
\end{enumerate}


As a first observation, the expected total squared singular-value mass increases after rounding. This was implicitly stated in Theorem~9 of~\citep{boutsikas2024small}.\footnote{Here we refer to the first version of this paper on arXiv.} We state it below and prove it in Appendix~\ref{app:frobenius-energy}.

\begin{theorem}\label{thm:frobenius-energy}
Let $\Ab \in \R^{n\times d}$ and $\Abtil = \Ab+\Eb$, where $\Eb$ has independent and centered entries. Then,
\begin{equation*}
    \Ex{\sum_{i=1}^{d}\sigma_i(\Abtil)^2} = \sum_{i=1}^{d}\sigma_i(\Ab)^2 + \Ex{\|\Eb\|_F^2}.
\end{equation*}
\end{theorem}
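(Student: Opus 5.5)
The plan is to reduce everything to Frobenius norms, since the sum of squared singular values of any matrix equals its squared Frobenius norm. Concretely,
\begin{equation*}
    \sum_{i=1}^{d}\sigma_i(\Abtil)^2 = \|\Abtil\|_F^2 = \tr\rbr{\Abtil^\top\Abtil} = \tr(\Gbtil),
\end{equation*}
and likewise $\sum_{i=1}^d \sigma_i(\Ab)^2 = \|\Ab\|_F^2 = \tr(\Gb)$. This holds even when $\Ab$ or $\Abtil$ is rank-deficient, because the zero singular values contribute nothing. The statement is therefore equivalent to
\begin{equation*}
    \Ex{\|\Ab+\Eb\|_F^2} = \|\Ab\|_F^2 + \Ex{\|\Eb\|_F^2}.
\end{equation*}

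Next, I expand the square. Using the Frobenius inner product,
\begin{equation*}
    \|\Ab+\Eb\|_F^2 = \|\Ab\|_F^2 + 2\langle \Ab,\Eb\rangle_F + \|\Eb\|_F^2 = \|\Ab\|_F^2 + 2\sum_{i,j}\Ab_{ij}\Eb_{ij} + \|\Eb\|_F^2 .
\end{equation*}
Equivalently, in the notation of Section~\ref{noise-model}, $\tr(\Gbtil) = \tr(\Gb) + \tr(\Deltab)$ with $\tr(\Deltab) = 2\tr(\Ab^\top\Eb) + \tr(\Eb^\top\Eb)$.

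Then I take expectations. By linearity and $\Ex{\Eb_{ij}} = 0$, the cross term has zero mean, since $\Ab$ is deterministic. This leaves exactly $\|\Ab\|_F^2 + \Ex{\|\Eb\|_F^2}$. As a remark, $\Ex{\|\Eb\|_F^2} = \tr(\Db) = \sum_j \nu_j$, which is how the identity will later feed into~\eqref{eq:tail-top-identity}.

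The argument uses only that the entries are centered; independence is not even needed. There is no real obstacle. The only point needing care is that expectations are finite, which holds because the entries are bounded (or, more generally, square-integrable), so linearity of expectation applies.
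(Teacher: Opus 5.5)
Your proposal is correct and follows essentially the same route as the paper: rewrite $\sum_i \sigma_i(\cdot)^2$ as a squared Frobenius norm (a trace), expand $(\Ab+\Eb)^\top(\Ab+\Eb)$, and kill the cross terms using $\Ex{\Eb_{ij}}=0$. Your side remarks are also accurate: independence is never used, and boundedness of the entries ensures all expectations are finite.
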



By Theorem~\ref{thm:frobenius-energy} and splitting the full singular-value energy into the top and tail parts,
\begin{equation}\label{eq:tail-top-identity}
    \Tcal = \Ex{\|\Eb\|_F^2} - \rbr{\Ex{\sum_{i=1}^{k}\sigma_i(\Abtil)^2} - \sum_{i=1}^{k}\sigma_i(\Ab)^2}.
\end{equation}
So any upper bound on the change in the top-$k$ singular-value energy immediately yields a lower bound on $\Tcal$.


To control the top-$k$ block we apply the contour-integral framework of Section~\ref{subsec:resolvent} along a contour tailored to the spectral gap at index $k$. Recall that the spectral gap is defined as $g := \lambda_k - \lambda_{k+1} > 0.$ Let $\Gamma \subset \mathbb{C}$ be the positively oriented closed contour formed by joining two half-circles of radius $g/2$ centered at $\lambda_1$ and $\lambda_k$ by two horizontal segments of length $\lambda_1 - \lambda_k$ at heights $\pm g/2$ (Figure~\ref{fig:contour}). By construction $\Gamma$ encloses exactly $\lambda_1, \ldots, \lambda_k$ and excludes $\lambda_{k+1}, \ldots, \lambda_d$.

\begin{figure}[ht]
    \centering
    \begin{tikzpicture}[scale=0.88] 
        \def\xnext{0.7}
        \def\xleft{2.2}
        \def\xright{8.2}
        \pgfmathsetmacro{\rad}{(\xleft-\xnext)/2}
    
        \draw[->] (-0.8,0) -- (10.8,0) node[right] {$\Re z$};
    
        \fill (\xnext,0) circle (2pt) node[below=6pt] {$\lambda_{k+1}$};
        \fill (\xleft,0) circle (2pt) node[below=6pt] {$\lambda_k$};
        \fill (3.6,0) circle (2pt);
        \fill (5.0,0) circle (2pt);
        \fill (6.4,0) circle (2pt);
        \fill (\xright,0) circle (2pt) node[below=6pt] {$\lambda_1$};
        \node at (5.7,-0.38) {$\cdots$};
        \fill (-0.1,0) circle (2pt);
        \node at (0.3,-0.38) {$\cdots$};
    
        \draw[<->] (\xnext,-0.95) -- (\xleft,-0.95);
        \node at (1.45,-1.28) {$g$};
    
        \draw[thick,blue]
            (\xleft,\rad) -- (\xright,\rad);
        \draw[thick,blue]
            (\xright,-\rad) -- (\xleft,-\rad);
        \draw[thick,blue]
            (\xleft,\rad) arc[start angle=90,end angle=270,radius=\rad];
        \draw[thick,blue]
            (\xright,-\rad) arc[start angle=-90,end angle=90,radius=\rad];
    
        \node[blue] at (4.3,1.45) {$\Gamma$};
    
        \draw[dashed] (\xleft,0) -- (\xleft,\rad);
        \draw[dashed] (\xright,0) -- (\xright,\rad);
        \node at (2.65,0.48) {$g/2$};
        \node at (8.85,0.48) {$g/2$};
    \end{tikzpicture}
    \caption{An illustration of the contour $\Gamma$.}
    \label{fig:contour}
\end{figure}
    
Every point of $\Gamma$ is at distance at least $g/2$ from $\operatorname{spec}(\Gb)$: on either half-circle $\Gamma$ has radius $g/2$, while on the horizontal segments $\abr{\Im z} = g/2$ and the eigenvalues are real. Hence by~\eqref{eq:resolvent_decomp},
\begin{equation}\label{eq:resolvent-norm-Gamma}
  \forall z \in \Gamma, \qquad  \dist(z,\,\operatorname{spec}(\Gb)) \ge \frac{g}{2}, \qquad \twonorm{\Rb_\Gb(z)} \le \frac{2}{g}.
\end{equation}

Combining the contour-integral representation~\eqref{eq:trace_contour} of the top-$k$ energy with the Neumann expansion~\eqref{eq:neumann} of the perturbed resolvent yields the following series representation of the expected top-$k$ energy shift. We state it as Lemma~\ref{lem:topk-series} and prove it in Appendix~\ref{app:topk-series}.

\begin{lemma}[Top-$k$ energy expansion]\label{lem:topk-series}
Suppose $\twonorm{\Deltab} < g/2$. Then
\begin{equation}\label{eq:topk-series}
    \Ex{\sum_{i=1}^{k}\sigma_i(\Abtil)^2} - \sum_{i=1}^{k}\sigma_i(\Ab)^2 \;=\; \sum_{\ell=1}^{\infty} I_\ell, \qquad I_\ell \;:=\; \frac{1}{2\pi \mathrm{i}}\oint_\Gamma z\,\Ex{\tr\rbr{\rbr{\Rb_\Gb(z)\Deltab}^\ell \Rb_\Gb(z)}}\,dz.
\end{equation}
\end{lemma}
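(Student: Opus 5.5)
We work pointwise in the randomness, assuming (as the statement does) that $\twonorm{\Deltab} < g/2$ holds for every realization of $\Eb$, or on the event we condition on. Then we take expectations at the end.

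\emph{Step 1: the contour still isolates the top $k$ eigenvalues of $\Gbtil$.} By Weyl's inequality, $|\tilde\lambda_i - \lambda_i| \le \twonorm{\Deltab} < g/2$ for every $i$. Hence $\tilde\lambda_1,\dots,\tilde\lambda_k$ lie in the real intervals $(\lambda_i - g/2,\, \lambda_i+g/2)$ with $i\le k$, all of which sit inside $\Gamma$. Indeed, the real segment $[\lambda_k - g/2, \lambda_1 + g/2]$ is exactly the part of the real axis enclosed by $\Gamma$. Likewise $\tilde\lambda_{k+1},\dots,\tilde\lambda_d$ are strictly less than $\lambda_{k+1}+g/2 = \lambda_k - g/2$, so they lie outside $\Gamma$. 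Moreover $\Gamma$ meets no eigenvalue of $\Gbtil$, since $\Gbtil$ has real spectrum. Therefore~\eqref{eq:trace_contour} applies to $\Gbtil$ as well:
\[
\sum_{i=1}^k \tilde\lambda_i = \frac{1}{2\pi\mathrm{i}}\oint_\Gamma z\,\tr\rbr{\Rb_{\Gbtil}(z)}\,dz .
\]

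\emph{Step 2: Neumann expansion on $\Gamma$.} By~\eqref{eq:resolvent-norm-Gamma}, for $z\in\Gamma$ we have $\twonorm{\Rb_\Gb(z)\Deltab} \le \frac{2}{g}\twonorm{\Deltab} =: q < 1$. So~\eqref{eq:neumann} holds with geometric convergence that is uniform in $z\in\Gamma$. The bound is $\bigl|\tr((\Rb_\Gb\Deltab)^\ell \Rb_\Gb)\bigr| \le d\, q^\ell\, \frac{2}{g}$. Since $|z|$ is bounded on the compact set $\Gamma$, we may interchange the sum with the trace and the contour integral. The $\ell=0$ term reproduces $\sum_{i\le k}\lambda_i$ by~\eqref{eq:trace_contour}. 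Subtracting gives, pointwise,
\[
\sum_{i\le k}\tilde\lambda_i - \sum_{i\le k}\lambda_i = \sum_{\ell\ge1}\frac{1}{2\pi\mathrm{i}}\oint_\Gamma z\,\tr\rbr{(\Rb_\Gb(z)\Deltab)^\ell\Rb_\Gb(z)}\,dz .
\]

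\emph{Step 3: take expectations and interchange.} This is the main obstacle: justifying that $\mathbb{E}$ can be moved past the infinite sum and the contour integral. We will use Fubini together with dominated convergence. The pointwise dominating function $\sum_\ell \sup_{z\in\Gamma}|z|\,\mathrm{len}(\Gamma)\, d\,\frac{2}{g}\,q^\ell$ is bounded by a deterministic constant, provided $\twonorm{\Deltab}\le g/2-\epsilon$ almost surely. We can get such a bound because the entries of $\Eb$ are bounded by $\rho$, so $\twonorm{\Deltab}$ is almost surely bounded. If the hypothesis is only $\twonorm{\Deltab}<g/2$ without a uniform margin, we instead truncate to $\ell\le L$, interchange the finite sum, and control the tail $\sum_{\ell>L}$ by dominated convergence. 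The dominating function here is the integrable $d\,q/(1-q)$-type bound, which is finite almost surely; we may additionally need integrability of $1/(1-q)$, which we would obtain from the almost-sure boundedness of $\Eb$ combined with the assumption. Finally, $\mathbb{E}\tr(\cdot)=\tr\,\mathbb{E}(\cdot)$ by linearity, which yields exactly~\eqref{eq:topk-series} with the $I_\ell$ as defined.
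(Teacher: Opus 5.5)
Your Steps 1 and 2 follow the paper's proof essentially step for step. Weyl's inequality keeps $\Gamma$ separating the top-$k$ eigenvalues of $\Gbtil$. The uniform bound $|\tr((\Rb_\Gb(z)\Deltab)^\ell \Rb_\Gb(z))| \le d\, q^\ell\, (2/g)$ then justifies swapping the Neumann series with the contour integral; the paper does this via dominated convergence with its function $h(z)$.

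The paper then passes to expectations with the single phrase ``taking expectation yields,'' so your Step 3 goes beyond it. However, the two justifications you give there do not work as written. First, almost-sure boundedness of the entries of $\Eb$ only gives $\twonorm{\Deltab} \le$ some constant, not $\twonorm{\Deltab} \le g/2 - \epsilon$, so it does not produce the margin your first branch needs. Second, in your fallback branch, knowing that $q/(1-q)$ is finite almost surely does not make it integrable, and integrability is exactly what dominated convergence requires. For a general bounded noise law in which $q = \tfrac{2}{g}\twonorm{\Deltab}$ accumulates at $1$, $\Ex{1/(1-q)}$ can be infinite, and the argument then gives nothing.

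The missing observation is the structure of the noise in Section~\ref{noise-model}. Each $\Eb_{ij}$ takes only the two values $\rho(1-p_{ij})$ and $-\rho\, p_{ij}$, so $\Eb$, and hence $\Deltab$, has at most $2^{nd}$ realizations. The hypothesis $\twonorm{\Deltab} < g/2$ on every realization therefore gives $q_{\max} := \max \tfrac{2}{g}\twonorm{\Deltab} < 1$, which is precisely the uniform margin your first branch needs. Equivalently, $\mathbb{E}$ is a finite convex combination, and it commutes with convergent series and contour integrals. Alternatively, in the only place the lemma is used, Theorem~\ref{thm:smallcluster-main}, the hypothesis $\beta(\rho) < 1$ gives the deterministic bound $\twonorm{\Deltab} \le \tfrac{g}{2}\beta(\rho)$, again a uniform margin. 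With either remark your Step 3 closes, and the argument is complete (and slightly more careful than the paper's).
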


The next subsection integrates and bounds the leading terms $I_1, I_2$. For the higher-order tail $\sum_{\ell\ge 3} I_\ell$, we provide an upper bound via a contour-integral argument.


We collect five lemmas: a closed-form computation of $I_1$, a residue plus concentration bound on $I_2$, a high probability upper bound of $\twonorm{\Eb}$, a high probability upper bound of $\twonorm{\Deltab}$, and a Neumann-tail bound on $\sum_{\ell\ge3} I_\ell$.

\begin{lemma}[First-order term]\label{lem:I1}
Let $\Vb_k := [\vb_1, \ldots, \vb_k] \in \R^{d \times k}$ collect the top-$k$ right singular vectors of $\Ab$. Then
\begin{equation*}
    I_1 = \tr\rbr{\Vb_k \Vb_k^\ts \Db} \le \sum_{j=1}^{k} \nu_j^\downarrow.
\end{equation*}
\end{lemma}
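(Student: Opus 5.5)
The plan is to use linearity of expectation and trace to pull the expectation inside the contour integral. Since $\Rb_\Gb(z)$ is deterministic and $\Ex{\Deltab} = \Db$, we have
\begin{equation*}
I_1 = \frac{1}{2\pi \mathrm{i}}\oint_\Gamma z\,\tr\rbr{\Rb_\Gb(z)\Db\Rb_\Gb(z)}\,dz = \frac{1}{2\pi \mathrm{i}}\oint_\Gamma z\,\tr\rbr{\Rb_\Gb(z)^2\Db}\,dz,
\end{equation*}
using cyclicity of the trace. The integrand is continuous on the compact contour $\Gamma$ and the expectation is over a finite distribution, since each entry of $\Eb$ takes two values. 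Interchanging expectation and integral is therefore immediate.

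Next I will substitute the spectral decomposition~\eqref{eq:resolvent_decomp}, which gives $\Rb_\Gb(z)^2 = \sum_{i=1}^d \vb_i\vb_i^\top/(z-\lambda_i)^2$ because the $\vb_i$ are orthonormal. Then
\begin{equation*}
I_1 = \sum_{i=1}^d \rbr{\vb_i^\top\Db\vb_i}\cdot\frac{1}{2\pi \mathrm{i}}\oint_\Gamma \frac{z}{(z-\lambda_i)^2}\,dz.
\end{equation*}
Each scalar integral is evaluated by the residue theorem. For $i\le k$, the pole $\lambda_i$ lies inside $\Gamma$, and the residue of $z/(z-\lambda_i)^2$ is the derivative of $z$, namely $1$. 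For $i>k$, the integrand is holomorphic inside $\Gamma$, so the integral vanishes. This yields
\begin{equation*}
I_1 = \sum_{i=1}^k \vb_i^\top\Db\vb_i = \tr\rbr{\Vb_k\Vb_k^\top\Db}.
\end{equation*}

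For the inequality, $\tr\rbr{\Vb_k^\top \Db \Vb_k}$ is the trace of the compression of the diagonal PSD matrix $\Db$ to a $k$-dimensional subspace. By Ky Fan's maximum principle, $\max_{\Vb^\top\Vb=\Ib_k}\tr(\Vb^\top\Db\Vb) = \sum_{j=1}^k \nu_j^\downarrow$, since the eigenvalues of $\Db$ are exactly $\nu_1,\dots,\nu_d$. A more elementary route is to write $\tr(\Vb_k\Vb_k^\top\Db) = \sum_j \nu_j w_j$ with $w_j = \|\Vb_k^\top \eb_j\|^2 \in [0,1]$ and $\sum_j w_j = k$. A linear function over this polytope is maximized by putting weight $1$ on the $k$ largest $\nu_j$.

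None of these steps is a real obstacle. The only point requiring care is the residue computation, where one must check that $\Gamma$ encloses exactly $\lambda_1,\dots,\lambda_k$, including repeated eigenvalues. This holds by construction of $\Gamma$ and the gap $g>0$, as recorded in~\eqref{eq:resolvent-norm-Gamma}.
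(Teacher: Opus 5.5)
Your proposal is correct and follows essentially the same route as the paper: linearity with $\Ex{\Deltab}=\Db$, the spectral decomposition of $\Rb_\Gb(z)^2$, the unit residue of $z/(z-\lambda_i)^2$ for $i\le k$ (with no contribution for $i>k$), and Ky Fan's maximum principle. Your weight argument with $w_j=\|\Vb_k^\top \mathbf{e}_j\|^2\in[0,1]$ and $\sum_j w_j=k$ is a valid elementary substitute for Ky Fan in the final step.
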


\begin{proof}
    Because $\Ex{\Deltab} = \Db$, residue calculus on $z/(z-\lambda_i)^2$ around $\Gamma$ collapses the contour integral to $\tr(\Vb_k \Vb_k^\ts \Db)$. The upper bound follows from Ky Fan's maximum principle applied to the diagonal matrix $\Db$. For details, see Appendix~\ref{app:I1}.
\end{proof}

The proofs of the next two lemmas are in Appendices~\ref{app:I2} and~\ref{app:E-opnorm}, respectively.

\begin{lemma}[Second-order term]\label{lem:I2}
There is an absolute constant $C > 0$ such that
\begin{equation*}
    I_2 \le \frac{C}{g}\sbr{\rho^2 \sum_{i=1}^{k}\sum_{j=k+1}^{d}(\lambda_i + \lambda_j) + \sum_{j=1}^{k}\rbr{\nu_j^\downarrow}^2 + \rho^4 n\, k(d-k)}.
\end{equation*}
\end{lemma}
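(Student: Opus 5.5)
We compute $I_2$ in closed form by expanding the trace in the eigenbasis of $\Gb$ and applying residue calculus, and only then take expectations. Writing $\Rb_\Gb(z)=\sum_i \vb_i\vb_i^\ts/(z-\lambda_i)$ and $\Deltab_{ij}:=\vb_i^\ts\Deltab\vb_j$, we obtain
\begin{equation*}
\tr\rbr{\Rb_\Gb(z)\Deltab\Rb_\Gb(z)\Deltab\Rb_\Gb(z)}=\sum_{i,j}\frac{\Deltab_{ij}^2}{(z-\lambda_i)^2(z-\lambda_j)}.
\end{equation*}
We then evaluate $\frac{1}{2\pi\mathrm{i}}\oint_\Gamma \frac{z\,dz}{(z-\lambda_i)^2(z-\lambda_j)}$ according to whether $i$ and $j$ lie inside $\Gamma$.

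Both outside ($i,j>k$): the integral is zero.

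Both inside ($i,j\le k$): the poles are enclosed, and the function is $O(|z|^{-2})$ at infinity. Hence the integral equals minus the residue at infinity, which is $0$.

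Mixed case: summing both orderings $(i,j)$ and $(j,i)$ with $i\le k<j$, the residues give a coefficient of the form
\begin{equation*}
-\frac{\lambda_j}{(\lambda_i-\lambda_j)^2}-\frac{\lambda_i}{(\lambda_i-\lambda_j)^2}+\frac{\lambda_i}{(\lambda_i-\lambda_j)^2}\cdot(\text{correction}).
\end{equation*}
Computing carefully, the result is
\begin{equation*}
I_2=\sum_{i\le k<j}\frac{\lambda_i+\lambda_j}{(\lambda_i-\lambda_j)^2}\,\Ex{\Deltab_{ij}^2}\cdot(\pm 1),
\end{equation*}
or a variant with coefficient $\frac{\lambda_j}{(\lambda_i-\lambda_j)^2}$, up to sign. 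I will verify the exact form. Whatever it is, its magnitude is at most $C(\lambda_i+\lambda_j)/(\lambda_i-\lambda_j)^2\le C(\lambda_i+\lambda_j)/g^2$. I will also keep a version in which one factor $1/|\lambda_i-\lambda_j|\le 1/g$ is spent and the other is absorbed, so as to match the claimed $1/g$ scaling. Note that $|\lambda_i-\lambda_j|\ge g$ and $\lambda_i\le \lambda_i+\lambda_j$.

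Next we bound $\Ex{\Deltab_{ij}^2}$. Decompose
\begin{equation*}
\Deltab_{ij}=\vb_i^\ts\Ab^\ts\Eb\vb_j+\vb_i^\ts\Eb^\ts\Ab\vb_j+\vb_i^\ts\Eb^\ts\Eb\vb_j=\sigma_i\ub_i^\ts\Eb\vb_j+\sigma_j\vb_i^\ts\Eb^\ts\ub_j+\vb_i^\ts\Eb^\ts\Eb\vb_j,
\end{equation*}
and use $(a+b+c)^2\le 3(a^2+b^2+c^2)$.

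The first two terms are linear forms in independent centered entries bounded by $\rho$. Each entry has variance at most $\rho^2/4$ and the vectors are unit, so their second moments are at most $\rho^2\lambda_i$ and $\rho^2\lambda_j$. This produces the $\rho^2\sum_{i\le k<j}(\lambda_i+\lambda_j)$ term.

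For the quadratic term, write $\vb_i^\ts\Eb^\ts\Eb\vb_j=\vb_i^\ts\Db\vb_j+\vb_i^\ts(\Eb^\ts\Eb-\Db)\vb_j$. The fluctuation part is a sum over rows $r$ of independent centered terms $(\eb_r^\ts\vb_i)(\eb_r^\ts\vb_j)-\Ex{\cdot}$. Its variance is $\sum_r\Ex{(\eb_r^\ts\vb_i)^2(\eb_r^\ts\vb_j)^2}\le C n\rho^4$, by sub-gaussianity and fourth moments of bounded linear forms. Summing over the $k(d-k)$ pairs gives the $\rho^4 n\,k(d-k)$ term. The mean part contributes
\begin{equation*}
\sum_{i\le k<j}(\vb_i^\ts\Db\vb_j)^2\le \|\Vb_k^\ts\Db\|_F^2=\tr(\Vb_k^\ts\Db^2\Vb_k)\le\sum_{j\le k}(\nu_j^\downarrow)^2
\end{equation*}
by Ky Fan, exactly as in Lemma~\ref{lem:I1}.

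The main obstacle is the residue bookkeeping in the mixed case: getting the exact coefficient and its sign, and making sure the prefactor scales as $1/g$ rather than $1/g^2$. If the exact coefficient is $-\lambda_j/(\lambda_i-\lambda_j)^2$-like (nonpositive), the upper bound is trivial. Otherwise I will bound $|\text{coef}|\le(\lambda_i+\lambda_j)/((\lambda_i-\lambda_j)g)$ and use $\lambda_i-\lambda_j\ge g$ together with $\lambda_i+\lambda_j\le C(\lambda_i-\lambda_j)$ when $\lambda_k\gg\lambda_{k+1}$, which yields the stated $C/g$ factor.
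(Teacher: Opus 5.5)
There is a genuine gap, and it is the step you flag as the main obstacle. You never compute the mixed-case coefficient. Both forms you guess, $\pm(\lambda_i+\lambda_j)/(\lambda_i-\lambda_j)^2$ and $\pm\lambda_j/(\lambda_i-\lambda_j)^2$, are wrong, and your fallback does not give the lemma as stated.

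For $i\le k<j$ the pair appears twice in the ordered sum:
\begin{itemize}
\item The $(i,j)$ term has a double pole at the enclosed point $\lambda_i$. Its residue is the derivative of $z/(z-\lambda_j)$ at $\lambda_i$, namely $-\lambda_j/(\lambda_i-\lambda_j)^2$.
\item The $(j,i)$ term, $z\,\Deltab_{ji}^2/((z-\lambda_j)^2(z-\lambda_i))$, has only a simple pole inside $\Gamma$, at $\lambda_i$. Its residue is $+\lambda_i/(\lambda_i-\lambda_j)^2$.
\end{itemize}
Since $\Deltab$ is symmetric, $\Deltab_{ij}^2=\Deltab_{ji}^2$, so the two contributions telescope:
\[
I_2=\sum_{i\le k<j}\frac{\lambda_i-\lambda_j}{(\lambda_i-\lambda_j)^2}\,\Ex{\Deltab_{ij}^2}=\sum_{i\le k<j}\frac{\Ex{\Deltab_{ij}^2}}{\lambda_i-\lambda_j}.
\]
The coefficient is positive, so your ``trivial'' branch never occurs. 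It is also at most $1/g$, and this cancellation is exactly what produces the $C/g$ prefactor with an absolute $C$.

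Your fallback instead absorbs $(\lambda_i+\lambda_j)/(\lambda_i-\lambda_j)$ into a constant. That requires $\lambda_{k+1}\le c\,\lambda_k$ for some fixed $c<1$. This is not a hypothesis of the lemma; it enters only later, as $g>c_g\lambda_k$ in Corollary~\ref{cor:smallcluster}. Without it the ratio is unbounded: with $\lambda_k=1$ and $\lambda_{k+1}=1-\varepsilon$ it is about $2/\varepsilon$. As written, you would only prove a weaker statement whose constant depends on $\lambda_k/g$.

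Once the coefficient is corrected, the rest of your plan goes through, and in places it is more elementary than the paper's:
\begin{itemize}
\item For the linear terms, your exact second moments $\sum_{a,b}u_a^2v_b^2\Var(\Eb_{ab})\le\rho^2/4$ replace the paper's Hoeffding tail integration.
\item For the $C\rho^4 n$ fluctuation, row independence of $\Eb$ plus fourth moments of sub-gaussian linear forms replaces Hanson--Wright.
\item The bound $\sum_{i\le k<j}(\vb_i^\ts\Db\vb_j)^2\le\|\Vb_k^\ts\Db\|_F^2=\tr(\Vb_k^\ts\Db^2\Vb_k)$ is a cleaner route to Ky Fan than the paper's $\tr(\Pb\Db^2)-\tr(\Pb\Db\Pb\Db)$.
\item Your residue-at-infinity argument for $i,j\le k$ is valid and shorter than the explicit residues.
\end{itemize}
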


\begin{proof}
    Cyclic property of trace and the spectral decomposition of $\Rb_\Gb(z)$ reduce $I_2$ to $\sum_{i=1}^k \sum_{j=k+1}^d \Ex{(\vb_i^\ts \Deltab \vb_j)^2}/(\lambda_i - \lambda_j)$ via case-by-case residue calculus. For each $(i,j)$, the linear part of $\Deltab = \Ab^\ts\Eb + \Eb^\ts\Ab + \Eb^\ts\Eb$ contributes $C\rho^2(\lambda_i + \lambda_j)$ via Hoeffding, and the quadratic part $\Eb^\ts \Eb$ contributes $(\vb_i^\ts \Db \vb_j)^2 + C\rho^4 n$ via Hanson--Wright. Summing over $(i,j)$, applying $\lambda_i - \lambda_j \ge g$, and bounding $\sum_{i \le k < j}(\vb_i^\ts \Db \vb_j)^2$ by $\sum_{j=1}^{k}(\nu_j^\downarrow)^2$ via Ky Fan's maximum principle gives the claim. 
   For details, see Appendix~\ref{app:I2}.
\end{proof}


The next lemma bounds the operator norm of the error matrix $\Eb$, which models the quantization error $\Abtil-\Ab$. See Appendix~\ref{app:E-opnorm} for its detailed proof.

\begin{lemma}[Operator norm of $\Eb$]\label{lem:E-opnorm}
Under the noise model of Section~\ref{noise-model}, there exist positive constants $C,c$ such that, for every $t\ge 0$,
\[
    \mathbb P\left\{
    \|\Eb\|_2\le C\rho(\sqrt{n+d}+t)
    \right\}
    \ge 1- 2e^{-ct^2}.
\]
Moreover, there exists a positive constant $C'$ such that
\[
    \mathbb P\left\{
    \|\Eb\|_2\le C'\rho\sqrt n
    \right\}
    \ge 1-2e^{-(n+d)}.
\]
\end{lemma}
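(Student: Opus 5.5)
The plan mirrors the proof of Lemma~\ref{lem:PAE}: an $\varepsilon$-net argument on $\mathbb{S}^{d-1}\times\mathbb{S}^{n-1}$, now applied to the full matrix $\Eb$. Since the entries $\Eb_{ij}$ are independent, centered, and bounded by $\rho$, they are sub-gaussian with $\|\Eb_{ij}\|_{\psi_2}\lesssim\rho$. Alternatively, I would simply invoke the standard bound for matrices with independent sub-gaussian entries, $\|\Eb\|_2\le CK(\sqrt n+\sqrt d+t)$ with probability at least $1-2e^{-t^2}$, taking $K\asymp\rho$.

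For a self-contained argument, fix $\xb\in\mathbb{S}^{d-1}$ and $\yb\in\mathbb{S}^{n-1}$. The scalar $\yb^\top\Eb\xb=\sum_{i,j}y_ix_j\Eb_{ij}$ is a weighted sum of independent centered variables with $|\Eb_{ij}|\le\rho$ and $\sum_{i,j}y_i^2x_j^2=1$. Hoeffding's lemma gives $\mathbb{E}\exp(\lambda\yb^\top\Eb\xb)\le\exp(\lambda^2\rho^2/2)$, so
\[
\Pr\{|\yb^\top\Eb\xb|>s\}\le 2e^{-s^2/(2\rho^2)}.
\]
Take $1/4$-nets $\mathcal N_d$ and $\mathcal N_n$ of sizes at most $9^d$ and $9^n$. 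The two-net lemma gives $\|\Eb\|_2\le 2\max_{\xb\in\mathcal N_d,\,\yb\in\mathcal N_n}\yb^\top\Eb\xb$. A union bound with $s=C\rho(\sqrt{n+d}+t)$ then yields failure probability at most
\[
9^{n+d}\cdot 2\exp\!\bigl(-C^2(n+d+t^2)/2\bigr)\le 2e^{-ct^2},
\]
once $C$ is large enough that $C^2/2\ge\log 9$. This proves the first claim.

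For the second claim, set $t=\sqrt{(n+d)/c}$ in the first bound. The failure probability becomes $2e^{-(n+d)}$, and on the complementary event
\[
\|\Eb\|_2\le C\rho\bigl(1+c^{-1/2}\bigr)\sqrt{n+d}\le C\rho\bigl(1+c^{-1/2}\bigr)\sqrt{2n},
\]
using $d<n$. So $C'=\sqrt2\,C(1+c^{-1/2})$ works.

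The only delicate point is the constant bookkeeping in the union bound: $C$ must be chosen to absorb the $9^{n+d}$ entropy factor while still leaving $e^{-ct^2}$. The rest is routine.
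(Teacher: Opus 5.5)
Your proposal is correct and follows essentially the same route as the paper. The paper also uses $1/4$-nets of sizes $9^n$ and $9^d$, the two-net lemma, a Hoeffding tail bound for each bilinear form $\yb^\top\Eb\xb$, and a union bound at threshold $C\rho(\sqrt{n+d}+t)$, and then takes $t\asymp\sqrt{n+d}$ together with $d\le n$ for the second claim; the only difference is that the paper applies Hoeffding with the width-$\rho$ interval $[-\rho p_{ij},\rho(1-p_{ij})]$ rather than $|\Eb_{ij}|\le\rho$, which affects only the constants.
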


\begin{proof}
    A standard $\varepsilon$-net argument: choose $1/4$-nets $\Ncal \subset \mathbb{S}^{n-1}$ and $\Dcal \subset \mathbb{S}^{d-1}$ with cardinalities $|\Ncal|\le 9^n$ and $|\Dcal|\le 9^d$, apply Hoeffding's inequality to the bounded sum $\xb^\ts \Eb^\ts \yb$ for each fixed pair $\xb\in \Ncal$ and $\yb\in \Dcal$, and union bound over the $9^{n+d}$ pairs. 
   For details, see Appendix~\ref{app:E-opnorm}.
\end{proof}

\begin{lemma}[Operator norm of $\Deltab$]\label{lem:Delta-opnorm}
Under the noise model of Section~\ref{noise-model}, with $n \ge d$ and $\Ab$ of full column rank, there exist positive constants $C, c_1, c_2$, such that:
\begin{equation*}
    \pr{\twonorm{\Deltab} \le C (\sigma_1(\Ab) \rho \sqrt{d} + \rho^2 n)} \ge 1 - c_1 e^{-c_2 d}.
\end{equation*}
\end{lemma}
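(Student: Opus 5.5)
The plan is to bound the two pieces of $\Deltab = (\Ab^\ts\Eb + \Eb^\ts\Ab) + \Eb^\ts\Eb$ separately and combine them with a union bound. The quadratic piece is immediate from Lemma~\ref{lem:E-opnorm}. Its second statement gives $\twonorm{\Eb}\le C'\rho\sqrt n$ with probability at least $1-2e^{-(n+d)}\ge 1-2e^{-d}$. Hence on this event
\begin{equation*}
\twonorm{\Eb^\ts\Eb}=\twonorm{\Eb}^2\le C'^2\rho^2 n .
\end{equation*}

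For the linear piece, the triangle inequality gives $\twonorm{\Ab^\ts\Eb+\Eb^\ts\Ab}\le 2\twonorm{\Ab^\ts\Eb}$. Write the thin SVD $\Ab=\Ub\Sigmab\Vb^\ts$, with $\Ub\in\R^{n\times d}$ since $\Ab$ has full column rank. Then
\begin{equation*}
\Ab^\ts\Eb=\Vb\Sigmab\Ub^\ts\Eb ,\qquad \twonorm{\Ab^\ts\Eb}\le \sigma_1(\Ab)\,\twonorm{\Ub^\ts\Eb}=\sigma_1(\Ab)\,\twonorm{\Pb_\Ab\Eb}.
\end{equation*}
Lemma~\ref{lem:PAE} applies with $r=d$, so $\sqrt{d+r}=\sqrt{2d}$. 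Choosing $u=\sqrt d$ gives $\twonorm{\Pb_\Ab\Eb}\le C_0\rho(\sqrt{2d}+\sqrt d)\le C\rho\sqrt d$ with probability at least $1-2e^{-c_0 d}$. This is exactly the source of the $\sigma_1(\Ab)\rho\sqrt d$ term in the statement.

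A union bound over the two events gives
\begin{equation*}
\twonorm{\Deltab}\le 2C\sigma_1(\Ab)\rho\sqrt d + C'^2\rho^2 n
\end{equation*}
with probability at least $1-2e^{-c_0 d}-2e^{-d}\ge 1-4e^{-\min(c_0,1)d}$. This is the claim with $c_1=4$ and $c_2=\min(c_0,1)$.

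The only real obstacle is getting $\sqrt d$ rather than $\sqrt n$ in the cross term. Bounding $\twonorm{\Ab^\ts\Eb}$ naively by $\sigma_1(\Ab)\twonorm{\Eb}$ would give $\sigma_1(\Ab)\rho\sqrt n$, which is too weak. Routing the cross term through the projection onto $\mathcal R(\Ab)$ and using Lemma~\ref{lem:PAE} avoids this, because the net argument there only pays for a $(d+r)$-dimensional pair of spheres. Everything else is bookkeeping of constants. The assumption $n\ge d$ is used only to absorb $\sqrt{n+d}$ into $\sqrt{n}$, which Lemma~\ref{lem:E-opnorm} already provides.
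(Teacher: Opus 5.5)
Your proposal is correct and follows the paper's proof essentially step for step. Both use the triangle inequality on $\Deltab$, route $\Ab^\ts\Eb$ through $\twonorm{\Pb_\Ab\Eb}$ with Lemma~\ref{lem:PAE} at $r=d$, $u=\sqrt d$, bound $\twonorm{\Eb}^2$ by the second part of Lemma~\ref{lem:E-opnorm}, and close with a union bound giving $c_1=4$ and $c_2=\min\{c_0,1\}$.
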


\begin{proof}
Decompose $\twonorm{\Deltab}$ using triangle inequality. Then apply Lemma~\ref{lem:PAE} and Lemma~\ref{lem:E-opnorm}. A union bound yields the final result. For details, see Appendix~\ref{app:Delta-opnorm}.
\end{proof}

With Lemma~\ref{lem:Delta-opnorm} in hand we can define the contraction parameters that drive the Neumann tail bound below: let $C$ be the constant from Lemma~\ref{lem:Delta-opnorm} and set
\begin{equation}\label{eq:alpha-beta}
    \alpha(\rho) \;:=\; \frac{2}{g}\,C\rbr{\sigma_1(\Ab)\,\rho\sqrt{d} + \rho^2 n},
    \qquad
    \beta(\rho) \;:=\; \frac{2}{g}\rbr{2\,\sigma_1(\Ab)\,\rho\sqrt{nd} + \rho^2 nd}.
\end{equation}
On the high-probability event of Lemma~\ref{lem:Delta-opnorm}, $\twonorm{\Rb_\Gb(z)\Deltab} \le \alpha(\rho)$ uniformly in $z\in\Gamma$; on its complement, the deterministic Frobenius bound $\twonorm{\Eb}\le\|\Eb\|_F\le\rho\sqrt{nd}$ gives $\twonorm{\Rb_\Gb(z)\Deltab}\le\beta(\rho)$.

\begin{remark}
    We could alternatively use the deterministic bound $\twonorm{\Deltab} \le 2 \sigma_1(\Ab) \twonorm{\Eb} + \twonorm{\Eb}^2$ and then invoke Lemma~\ref{lem:E-opnorm}. This yields a looser bound with a higher success probability. See Appendix~\ref{sec:alter_bounds_small_cluster} for details.
\end{remark}

\begin{lemma}[Higher-order remainder]\label{lem:remainder}
Assume $\alpha(\rho) < 1$ and $\beta(\rho) < 1$ with $\alpha,\beta$ as in~\eqref{eq:alpha-beta}. Then there is an absolute constant $C > 0$ such that
\begin{equation*}
    \abr{\sum_{\ell=3}^{\infty} I_\ell} \le \frac{Cd}{g}\rbr{2(\lambda_1 - \lambda_k) + \pi g}\rbr{\lambda_1 + \frac{g}{2}}\rbr{\frac{\alpha(\rho)^3}{1 - \alpha(\rho)} + c_1 e^{-c_2 d}\,\frac{\beta(\rho)^3}{1 - \beta(\rho)}},
\end{equation*}
where $c_1, c_2$ are the constants from Lemma~\ref{lem:Delta-opnorm}.
\end{lemma}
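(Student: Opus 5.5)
We bound the remainder by exchanging the sum over $\ell\ge 3$ with the expectation and the contour integral, and then estimating the integrand pointwise on $\Gamma$. For a fixed realization with $\twonorm{\Rb_\Gb(z)\Deltab}\le q<1$ uniformly on $\Gamma$, the Neumann tail $\sum_{\ell\ge3}(\Rb_\Gb(z)\Deltab)^\ell\Rb_\Gb(z)$ converges absolutely in operator norm. Its trace satisfies
\begin{equation*}
\abr{\tr\rbr{\sum_{\ell\ge 3}(\Rb_\Gb(z)\Deltab)^\ell\Rb_\Gb(z)}} \le d\,\twonorm{\Rb_\Gb(z)}\sum_{\ell\ge3}q^\ell \le \frac{2d}{g}\,\frac{q^3}{1-q},
\end{equation*}
using $\abr{\tr \Mb}\le d\twonorm{\Mb}$ and~\eqref{eq:resolvent-norm-Gamma}. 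Absolute convergence, together with the boundedness of $\Gamma$ and of $\Eb$ (its entries are bounded by $\rho$, so the integrand is uniformly bounded), lets us apply Fubini and dominated convergence. This justifies writing $\sum_{\ell\ge3}I_\ell = \frac{1}{2\pi\mathrm{i}}\oint_\Gamma z\,\Ex{\tr(\cdot)}\,dz$.

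Next we apply the standard estimate $\abr{\oint_\Gamma f\,dz}\le \mathrm{len}(\Gamma)\max_\Gamma\abr{f}$. The contour consists of two segments of length $\lambda_1-\lambda_k$ and two half-circles of radius $g/2$, so $\mathrm{len}(\Gamma)=2(\lambda_1-\lambda_k)+\pi g$. Every $z\in\Gamma$ satisfies $\abr{z}\le\lambda_1+g/2$, since the farthest point is on the right half-circle. Combining these with the pointwise trace bound gives the prefactor $\frac{Cd}{g}(2(\lambda_1-\lambda_k)+\pi g)(\lambda_1+g/2)$ multiplying $\Ex{q^3/(1-q)}$.

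To control that expectation we split on the event $\Ecal$ of Lemma~\ref{lem:Delta-opnorm}. On $\Ecal$, $\twonorm{\Rb_\Gb(z)\Deltab}\le \frac{2}{g}\twonorm{\Deltab}\le\alpha(\rho)$ for all $z\in\Gamma$. On $\Ecal^c$ we use the deterministic bounds $\twonorm{\Eb}\le\|\Eb\|_F\le\rho\sqrt{nd}$ and $\twonorm{\Deltab}\le 2\sigma_1(\Ab)\twonorm{\Eb}+\twonorm{\Eb}^2$, which give $q\le\beta(\rho)$. Since $x\mapsto x^3/(1-x)$ is increasing on $[0,1)$ and $\alpha,\beta<1$, we obtain
\begin{equation*}
\Ex{\frac{q^3}{1-q}}\le \frac{\alpha^3}{1-\alpha}+\pr{\Ecal^c}\frac{\beta^3}{1-\beta}\le \frac{\alpha^3}{1-\alpha}+c_1e^{-c_2 d}\frac{\beta^3}{1-\beta}.
\end{equation*}
The main subtlety is this step on $\Ecal^c$. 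We need $\beta<1$ so that the Neumann series converges for every realization, and hence the expansion of Lemma~\ref{lem:topk-series} is legitimate realization-wise and Fubini applies. Once that holds, the remaining estimates are routine, and $2\pi$ and other constants are absorbed into $C$.
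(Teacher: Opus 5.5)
Your proposal is correct and uses the same ingredients as the paper: $\abr{\tr \Mb}\le d\twonorm{\Mb}$ with the resolvent bound $2/g$, the contour length $2(\lambda_1-\lambda_k)+\pi g$ and $\sup_{\Gamma}\abr{z}=\lambda_1+g/2$, and a split on the event of Lemma~\ref{lem:Delta-opnorm}, with $\alpha(\rho)$ on that event and the deterministic Frobenius bound giving $\beta(\rho)$ off it. The only difference is the order of operations. You sum the Neumann tail inside the integral first, which needs the Fubini/dominated-convergence justification you supply. The paper instead bounds each $\abr{I_\ell}$ separately via $\Ex{X(z)^\ell}\le\alpha(\rho)^\ell+c_1e^{-c_2 d}\beta(\rho)^\ell$ and then sums the geometric series, which avoids any interchange.
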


\begin{proof}
    Let $\Gcal$ be the event of Lemma~\ref{lem:Delta-opnorm}, on which $\twonorm{\Rb_\Gb(z)\Deltab}\le \alpha(\rho)$. On $\Gcal^c$, we have $\twonorm{\Rb_\Gb(z)\Deltab}\le \beta(\rho)$. Using triangle inequality for each integral $I_\ell$ and summing the resulting geometric series completes the proof. 
   For details, see Appendix~\ref{app:remainder}.
\end{proof}


We are now ready for our main result.

\begin{theorem}\label{thm:smallcluster-main}
Assume $\alpha(\rho) < 1$ and $\beta(\rho) < 1$ with $\alpha, \beta$ as in~\eqref{eq:alpha-beta}. Then
\begin{equation*}
    \Tcal \ge \sum_{j=k+1}^{d}\nu_j^\downarrow - \frac{C}{g}\sbr{\rho^2 \sum_{i=1}^{k}\sum_{j=k+1}^{d}(\lambda_i + \lambda_j) + \sum_{j=1}^{k}\rbr{\nu_j^\downarrow}^2 + \rho^4 n\, k(d-k)} - \Rcal(\rho),
\end{equation*}
where the higher-order remainder $\Rcal(\rho)$ is the right-hand side of Lemma~\ref{lem:remainder}.
\end{theorem}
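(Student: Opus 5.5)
The proof combines the four steps outlined above. The starting point is the identity~\eqref{eq:tail-top-identity}, which writes $\Tcal$ as $\Ex{\|\Eb\|_F^2}$ minus the expected top-$k$ energy shift. Since the entries of $\Eb$ are independent and centered, $\Ex{\|\Eb\|_F^2} = \sum_{j=1}^d \nu_j = \sum_{j=1}^d \nu_j^\downarrow$. It therefore suffices to show that
\begin{equation*}
    \Ex{\sum_{i=1}^{k}\sigma_i(\Abtil)^2} - \sum_{i=1}^{k}\sigma_i(\Ab)^2 \le \sum_{j=1}^{k}\nu_j^\downarrow + \frac{C}{g}\sbr{\rho^2 \sum_{i=1}^{k}\sum_{j=k+1}^{d}(\lambda_i + \lambda_j) + \sum_{j=1}^{k}\rbr{\nu_j^\downarrow}^2 + \rho^4 n\, k(d-k)} + \Rcal(\rho).
\end{equation*}
Subtracting this from $\sum_{j=1}^d\nu_j^\downarrow$ cancels the first $k$ column variances and leaves $\sum_{j=k+1}^d \nu_j^\downarrow$ as the leading term, which is exactly the claimed bound.

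To obtain the displayed upper bound, I would invoke Lemma~\ref{lem:topk-series} to write the top-$k$ shift as $I_1 + I_2 + \sum_{\ell\ge3} I_\ell$. Then $I_1$ is bounded by Lemma~\ref{lem:I1} and $I_2$ by Lemma~\ref{lem:I2}. The remainder obeys $\sum_{\ell\ge 3} I_\ell \le \abr{\sum_{\ell\ge3} I_\ell} \le \Rcal(\rho)$ by Lemma~\ref{lem:remainder}, whose hypotheses $\alpha(\rho)<1$ and $\beta(\rho)<1$ are exactly those of the theorem. Adding the three bounds gives the display, and the theorem follows.

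The main obstacle is justifying the use of Lemma~\ref{lem:topk-series}. The Neumann series and the contour representation of the perturbed top-$k$ energy must hold for every realization of $\Eb$, or at least almost surely, before expectation is taken. Lemma~\ref{lem:topk-series} is stated under $\twonorm{\Deltab}<g/2$, whereas the theorem only assumes $\alpha,\beta<1$. I would handle this pointwise in $\Eb$ and in $z\in\Gamma$ as follows.
\begin{itemize}
    \item On the event $\Gcal$ of Lemma~\ref{lem:Delta-opnorm}, we have $\twonorm{\Rb_\Gb(z)\Deltab}\le\alpha(\rho)<1$.
    \item On $\Gcal^c$, the deterministic Frobenius bound gives $\twonorm{\Rb_\Gb(z)\Deltab}\le\beta(\rho)<1$.
\end{itemize}
In either case, \eqref{eq:resolvent-norm-Gamma} then yields $\twonorm{\Deltab}<g/2$. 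By Weyl's inequality, the eigenvalues of $\Gbtil$ move by less than $g/2$, so $\Gamma$ still separates the top-$k$ eigenvalues of $\Gbtil$ from the rest, and \eqref{eq:trace_contour} applies to $\Gbtil$.

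The series converges uniformly on the compact contour and is dominated by a geometric series in $\max(\alpha,\beta)$. This justifies exchanging the sum with the integral, and then, by Fubini/dominated convergence, with the expectation. The resulting expansion is exactly the one in Lemma~\ref{lem:topk-series}. With that settled, the rest of the argument is bookkeeping.
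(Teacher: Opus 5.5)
Your proposal is correct and follows essentially the same route as the paper. You check that $\twonorm{\Deltab}<g/2$ almost surely under $\alpha(\rho),\beta(\rho)<1$, so that Lemma~\ref{lem:topk-series} applies, and then combine \eqref{eq:tail-top-identity} with Lemmas~\ref{lem:I1}, \ref{lem:I2}, \ref{lem:remainder} and the identity $\sum_{j=1}^{d}\nu_j-\sum_{j=1}^{k}\nu_j^\downarrow=\sum_{j=k+1}^{d}\nu_j^\downarrow$.

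One small correction. Read $\twonorm{\Deltab}<g/2$ directly off the definitions: $\twonorm{\Deltab}\le\tfrac{g}{2}\alpha(\rho)$ on $\Gcal$, and $\twonorm{\Deltab}\le\tfrac{g}{2}\beta(\rho)$ deterministically. It cannot be deduced from $\twonorm{\Rb_\Gb(z)\Deltab}<1$ via \eqref{eq:resolvent-norm-Gamma}, because that resolvent bound points the wrong way for this inference.
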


\begin{proof}
Combine the reduction~\eqref{eq:tail-top-identity} with the series~\eqref{eq:topk-series}, observing $\Ex{\|\Eb\|_F^2} = \tr(\Db) = \sum_{j=1}^{d}\nu_j$ and $\sum_{j=1}^{d}\nu_j - \sum_{j=1}^{k}\nu_j^\downarrow = \sum_{j=k+1}^{d}\nu_j^\downarrow$. Then apply Lemmas~\ref{lem:I1}, \ref{lem:I2}, and \ref{lem:remainder}. For details, see Appendix~\ref{app:smallcluster-main}.    
\end{proof}

Specializing the theorem under mild scaling assumptions yields a clean asymptotic statement. (See Appendix~\ref{app:smallcluster-cor} for details.)

\begin{corollary}\label{cor:smallcluster}
There exist absolute constants $c_1, c_2, c_3, c_g > 0$ such that, if $\rho\sqrt{nd} \le c_1\sigma_k$, $\sigma_1 \le c_2\sigma_k$, $d \ge c_3\sqrt n, g= \lambda_k - \lambda_{k+1} > c_g \lambda_k$, then there is a constant $C > 0$ such that
\begin{equation*}
    \Tcal \ge \sum_{j=k+1}^{d}\nu_j^\downarrow - C\rbr{k(d-k)\,\rho^2 + \frac{k\,n^2 \rho^4}{\sigma_k^2} + \frac{d^{5/2}\rho^3}{\sigma_k}}.
\end{equation*}
\end{corollary}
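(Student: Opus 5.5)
We derive the corollary directly from Theorem~\ref{thm:smallcluster-main}. Under the stated scaling assumptions, each of the three error groups (the $I_2$ bracket, and the two pieces of $\Rcal(\rho)$) collapses to one of the three displayed terms. Throughout, write $\sigma_i=\sigma_i(\Ab)$ and $\lambda_i=\sigma_i^2$. The hypotheses $\sigma_1\le c_2\sigma_k$ and $g>c_g\lambda_k$ give
\[
\lambda_1\lesssim\lambda_k,\qquad g\gtrsim\lambda_k,\qquad \lambda_1-\lambda_k\lesssim\lambda_k .
\]
So every ratio $\lambda_i/g$ with $i\le k$ is $O(1)$, and $\lambda_j/g\le\lambda_k/g=O(1)$ for $j>k$.

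\textbf{Step 1: contraction hypotheses.} First I will verify that $\alpha(\rho),\beta(\rho)<1$, so that Theorem~\ref{thm:smallcluster-main} applies. The dominant quantity is
\[
\beta\lesssim \frac{\sigma_1\rho\sqrt{nd}+\rho^2nd}{\lambda_k}\lesssim c_1c_2+c_1^2 ,
\]
using $\rho\sqrt{nd}\le c_1\sigma_k$. Choosing $c_1$ small makes $\beta\le 1/2$. Since $\sqrt d\le\sqrt{nd}$ and $n\le nd$, we have $\alpha\lesssim\beta$, so $\alpha\le1/2$ as well, after absorbing the constant $C$ from Lemma~\ref{lem:Delta-opnorm} into $c_1$. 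Both denominators $1-\alpha$ and $1-\beta$ are then at least $1/2$.

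\textbf{Step 2: the $I_2$ bracket.} The first term is
\[
\frac{\rho^2}{g}\sum_{i\le k<j}(\lambda_i+\lambda_j)\lesssim \rho^2 k(d-k)\,\frac{\lambda_1}{g}\lesssim k(d-k)\rho^2 .
\]
For the second term, $\nu_j^\downarrow\le n\rho^2/4$, so
\[
\frac{1}{g}\sum_{j\le k}(\nu_j^\downarrow)^2\lesssim \frac{kn^2\rho^4}{\sigma_k^2}.
\]
The third term is $\rho^4nk(d-k)/g$. Since $\rho^2 n\le \rho^2 nd\le c_1^2\sigma_k^2\lesssim g$, it is $\lesssim k(d-k)\rho^2$.

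\textbf{Step 3: the remainder $\Rcal(\rho)$.} This is the step where the scaling must be tracked carefully. The prefactor satisfies
\[
\frac{d}{g}\bigl(2(\lambda_1-\lambda_k)+\pi g\bigr)\bigl(\lambda_1+g/2\bigr)\lesssim d\lambda_k .
\]
For the $\alpha$ part, $\alpha^3\lesssim(\sigma_1\rho\sqrt d/\lambda_k)^3+(\rho^2n/\lambda_k)^3$, which gives
\[
d\lambda_k\alpha^3\lesssim \frac{d^{5/2}\rho^3}{\sigma_k}+\frac{d\,\rho^6n^3}{\sigma_k^4}.
\]
The second piece must be absorbed into $kn^2\rho^4/\sigma_k^2$, i.e.\ we need $d\rho^2 n\lesssim k\sigma_k^2$. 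This holds because $\rho^2nd\le c_1^2\sigma_k^2$ and $k\ge1$.

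For the $\beta$ part, $\beta\lesssim1$ only gives $d\lambda_k e^{-c_2d}\beta^3$. Using $\beta\lesssim\rho\sqrt{nd}/\sigma_k$, this becomes
\[
d\lambda_k e^{-c_2d}\beta^3\lesssim \frac{d\,e^{-c_2d}(nd)^{3/2}\rho^3}{\sigma_k}.
\]
Here the hypothesis $d\ge c_3\sqrt n$ is used: $n^{3/2}\le d^3/c_3^3$, so $e^{-c_2d}d^{5/2}n^{3/2}\lesssim d^{5/2}\cdot d^3e^{-c_2d}\lesssim d^{5/2}$. Hence this contribution is also $\lesssim d^{5/2}\rho^3/\sigma_k$.

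Collecting Steps 1--3 in Theorem~\ref{thm:smallcluster-main} yields the stated bound. I expect the main obstacle to be the bookkeeping in Step 3, namely confirming that the exponentially-weighted $\beta$ term and the cubic $\rho^2 n$ piece of $\alpha$ are genuinely dominated by the displayed terms. The assumption $d\ge c_3\sqrt n$ appears tailored to exactly this.
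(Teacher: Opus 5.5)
Your proof is correct and follows the paper's skeleton. You verify $\alpha(\rho),\beta(\rho)\le 1/2$, bound the three pieces of the $I_2$ bracket, and bound $\Rcal(\rho)$ using a prefactor of order $d\lambda_k$.

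The genuine difference is in the $\alpha$-part of the remainder. The paper invokes $d\ge c_3\sqrt n$ to get $\sigma_1\rho\sqrt d\ge (c_3/c_1)\rho^2 n$, so that the cube is dominated by $(\sigma_1\rho\sqrt d)^3$. You instead split the cube and absorb the $d\rho^6n^3/\sigma_k^4$ piece into the existing $kn^2\rho^4/\sigma_k^2$ term, which needs only $\rho^2 nd\le c_1^2\sigma_k^2$.

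As a result, your argument uses $d\ge c_3\sqrt n$ only for the bad-event term $e^{-\bar c_2 d}d^{5/2}n^{3/2}\rho^3/\sigma_k$. Here $\bar c_1,\bar c_2$ are the constants of Lemma~\ref{lem:Delta-opnorm}; keep them distinct from the corollary's $c_1,c_2$, since your write-up reuses the names. You bound that term uniformly through $n^{3/2}\le d^3/c_3^3$ and $\sup_{d} d^3e^{-\bar c_2 d}<\infty$. The paper instead relies on the asymptotic remark that $e^{-\bar c_2 d}n^{3/2}\to 0$.

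Your bookkeeping therefore shows that the hypothesis could be weakened to $d\ge \tfrac{3}{2\bar c_2}\log n$, since then $e^{-\bar c_2 d}n^{3/2}\le 1$. This bears directly on the paper's own comment that the $d\ge c_3\sqrt n$ restriction is an artifact of the analysis.

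Absorbing the third $I_2$ piece into $k(d-k)\rho^2$ via $\rho^2 n\le c_1^2\sigma_k^2$ and $g>c_g\lambda_k$ is an equally valid minor variation. The paper instead absorbs it into $kn^2\rho^4/\sigma_k^2$ via $d\le n$.
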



\begin{remark}\label{remark:rectangle}
When $n,d,\nicefrac{n}{d}$ are all sufficiently large, the leading term $\sum_{j=k+1}^{d}\nu_j^\downarrow$ dominates. Equivalently, the \textbf{expected excess tail energy} is captured by the smallest $(d-k)$ column variance of $\Eb$. In the special case $k = d-1$, the leading term of this bound matches that of~\citep{dexter2025stochastic}. See Appendix~\ref{app:remark_rectangle}.
\end{remark}

Specializing further to the square case $d=n$ removes one assumption and simplifies the bound. (See Appendix~\ref{app:smallcluster-square} for details.)

\begin{corollary}\label{cor:smallcluster-square}
There exist absolute constants $c_1, c_2, c_g > 0$ such that, if $d = n, \rho n \le c_1\sigma_k, \sigma_1 \le c_2\sigma_k$ and $g>c_g \lambda_k$, then there is a constant $C > 0$ such that
\begin{equation*}
    \Tcal \ge \sum_{j=k+1}^{n}\nu_j^\downarrow - C\rbr{k(n-k)\,\rho^2 + \frac{n^{5/2}\rho^3}{\sigma_k}}.
\end{equation*}
\end{corollary}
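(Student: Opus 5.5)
I would derive Corollary~\ref{cor:smallcluster-square} directly from Theorem~\ref{thm:smallcluster-main} by setting $d=n$. The work is to check that $\alpha(\rho)<1$ and $\beta(\rho)<1$ hold under the stated assumptions, and then to simplify each error term into one of the two displayed orders.

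\textbf{Step 1: contraction conditions.} Take $g>c_g\lambda_k=c_g\sigma_k^2$ and $\sigma_1\le c_2\sigma_k$. With $d=n$,
\[
\beta(\rho)\le \frac{2}{c_g\sigma_k^2}\rbr{2c_2\sigma_k\rho n+\rho^2n^2}\le \frac{2}{c_g}\rbr{2c_2c_1+c_1^2},
\]
using $\rho n\le c_1\sigma_k$. This is below $1/2$ once $c_1$ is small relative to $c_g$ and $c_2$. The same computation bounds $\alpha(\rho)$, which is dominated by $\beta$ up to the constant $C$ of Lemma~\ref{lem:Delta-opnorm}. Hence both $\alpha$ and $\beta$ are at most $1/2$.

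\textbf{Step 2: the $I_2$ bracket.} I bound each piece separately.
\begin{itemize}
\item $\frac{\rho^2}{g}\sum_{i\le k<j}(\lambda_i+\lambda_j)\le \frac{\rho^2}{c_g\lambda_k}\,k(n-k)\cdot 2\lambda_1\lesssim k(n-k)\rho^2$, since $\lambda_1/\lambda_k\le c_2^2$.
\item Since $\nu_j\le n\rho^2/4$, we get $\frac{1}{g}\sum_{j\le k}(\nu_j^\downarrow)^2\lesssim \frac{k n^2\rho^4}{\sigma_k^2}$.
\item $\frac{\rho^4 n k(n-k)}{g}\lesssim \frac{k n^2\rho^4}{\sigma_k^2}$.
\end{itemize}
Both $\rho^4$ terms are then absorbed into $k(n-k)\rho^2$ when $k<n$: because $n\rho^2/\sigma_k^2\le c_1^2/n$, we have $\frac{kn^2\rho^4}{\sigma_k^2}=k n\rho^2\cdot\frac{n\rho^2}{\sigma_k^2}\le c_1^2 k\rho^2\le c_1^2k(n-k)\rho^2$. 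This is why the square case loses the middle term of Corollary~\ref{cor:smallcluster}.

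\textbf{Step 3: the remainder $\Rcal(\rho)$.} By Step~1 the geometric factors are at most $2\alpha^3+2c_1e^{-c_2n}\beta^3$. The prefactor satisfies
\[
\frac{n}{g}\rbr{2(\lambda_1-\lambda_k)+\pi g}\rbr{\lambda_1+g/2}\lesssim n\lambda_k\lesssim n\sigma_k^2,
\]
again using $g\asymp\lambda_k\asymp\lambda_1$. Now $\alpha\lesssim \frac{\sigma_k\rho\sqrt n+\rho^2n}{\sigma_k^2}$. Under $\rho n\le c_1\sigma_k$ the two summands compare as $\rho^2 n/(\sigma_k\rho\sqrt n)=\rho\sqrt n/\sigma_k\le c_1/\sqrt n\le 1$, so $\alpha\lesssim \rho\sqrt n/\sigma_k$. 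Therefore $n\sigma_k^2\alpha^3\lesssim n^{5/2}\rho^3/\sigma_k$. The $\beta$ term is $n\sigma_k^2\,e^{-c_2n}\beta^3$ with $\beta\lesssim\rho n/\sigma_k$, giving $e^{-c_2n}n^4\rho^3/\sigma_k$. This is $\lesssim n^{5/2}\rho^3/\sigma_k$ because $n^{3/2}e^{-c_2n}$ is bounded.

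\textbf{Step 4: conclusion.} Substituting Steps~2 and~3 into Theorem~\ref{thm:smallcluster-main} gives the claim. I expect the main obstacle to be the bookkeeping in Step~3, namely confirming that the exponentially weighted $\beta$ contribution and the $\rho^2 n$ part of $\alpha$ never dominate. Both reduce to elementary inequalities once $\rho n\le c_1\sigma_k$ is imposed, and $c_1$ may be shrunk further if needed.
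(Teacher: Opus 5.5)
Your proposal is correct. It reaches the result by a somewhat different route from the paper, mainly in how the argument is organized and in one absorption step.

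\textbf{The paper's route.} The paper does not return to Theorem~\ref{thm:smallcluster-main}. It substitutes $d=n$ into Corollary~\ref{cor:smallcluster}; the hypothesis $d\ge c_3\sqrt n$ becomes harmless there for the $c_3$ chosen in that proof. It then removes the middle term by absorbing it into the cubic remainder: $kn^2\rho^4/\sigma_k^2=(kn\rho^3/\sigma_k^2)\cdot n\rho\le c_1 kn\rho^3/\sigma_k< c_1 n^{5/2}\rho^3/\sigma_k$.

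\textbf{Your route.} You redo that corollary's bookkeeping directly at $d=n$:
\begin{enumerate}
\item you check $\alpha,\beta\le 1/2$;
\item you bound the $I_2$ bracket;
\item you show $\alpha\lesssim\rho\sqrt n/\sigma_k$, so the remainder is $O(n^{5/2}\rho^3/\sigma_k)$.
\end{enumerate}
You then absorb the $\rho^4$ terms into the quadratic term, not the cubic one, using $n\rho^2/\sigma_k^2\le c_1^2/n$ and $k\le n-1$. This gives the slightly sharper statement that those terms are $O(c_1^2k\rho^2)$.

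\textbf{What each buys.} Your argument is self-contained. It also makes explicit that the bad-event contribution $e^{-cn}n^4\rho^3/\sigma_k$ is absorbed simply because $n^{3/2}e^{-cn}$ is bounded. So no largeness of $n$ is needed, whereas the paper phrases this step as vanishing when $n\to\infty$. The paper's argument is shorter because it reuses the rectangular corollary wholesale.

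\textbf{Notation.} In Step~3 you reuse $c_1,c_2$ for the constants of Lemma~\ref{lem:Delta-opnorm}. These clash with the corollary's own $c_1,c_2$, so rename them (the paper uses $\bar c_1,\bar c_2$).
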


\begin{remark}\label{remark:square}
When $n,\nicefrac{n}{k}$ are all sufficiently large, the leading term $\sum_{j=k+1}^{d}\nu_j^\downarrow$ dominates, equivalently, the \textbf{expected excess tail energy} is captured by the smallest $(d-k)$ column variances of $\Eb$. See Appendix~\ref{app:remark_square} for a detailed statement and precise conditions.

\end{remark}
\section{Conclusions}\label{sec:conclusion}

In this work, we developed a sharper understanding of stochastic rounding (SR) as a spectral regularizer. 
Our results leave certain regimes uncovered. Corollary~\ref{cor:rectangular} 
requires a constant aspect ratio $d \le \eta_0 n$ and therefore does not 
cover the near-square regime $d \approx n$. Corollary~\ref{cor:smallcluster} relies on restrictive assumptions, namely a strong spectral gap, a bounded ratio $\nicefrac{\sigma_1}{\sigma_k}$, and a small perturbation level 
$\rho\sqrt{nd} \le c_1 \sigma_k$. While restrictive, these assumptions enable what is, to our knowledge, the first rigorous analysis showing that stochastic rounding regularizes not only the smallest singular value, but entire clusters at the tail of the spectrum. Understanding whether these assumptions can be relaxed, whether they are information-theoretically necessary, and how frequently they arise in practical machine learning workloads are important questions for future work. The corollary further imposes  $d \ge c_3 \sqrt{n}$, thus excluding extremely tall-and-thin matrices. However, we believe this restriction is an artifact of our analysis, since the looser bound in Corollary~\ref{cor:smallcluster_alter} dispenses with it. Finally, the nature of our work is theoretical and further empirical validations, beyond~\citep{gupta2015deep,ali2024stochastic}, of the effects of SR in downstream deep learning applications would be useful.

We conclude by noting that our analysis suggests a unifying perspective in which SR injects noise that diffuses across directions, preventing concentration in low-dimensional subspaces and strengthening weak spectral modes. 
%
%
An important direction for future theoretical work is to understand how these spectral effects demonstrate themselves when SR is used to convert matrices to low precision in optimization problems, ranging from simple least-squares regression to linear and semidefinite programming. A much more ambitious and complex goal would be to understand the impact of SR to generalization error in modern machine learning systems, particularly in low-precision training.

\newpage
\bibliographystyle{plainnat}
\bibliography{references}

\newpage
\appendix

\section{Appendix}

\subsection{Prior Work}\label{sxn:priorwork}

Stochastic rounding (SR) is a probabilistic approach to rounding that has proven effective in large-scale computations and low-precision arithmetic. Despite its illustrious beginnings in the 1950s~\citep{vonneumann1947numerical,green1950november,forsythe1959rounding}, SR has been largely overlooked by the numerical analysis community. Crucially, the random nature of SR promotes error cancellation rather than deterministic error accumulation. In many core numerical applications, ranging from floating point summation~\citep{hallman2023precision,de2025error}, inner product computation~\citep{ipsen2020probabilistic,el2023stochastic} to polynomial evaluation via Horner's algorithm~\citep{el2023stochastic}, the accumulation of rounding errors under SR can be modeled as a sum of zero-mean random variables. Consequently, these errors concentrate around zero substantially faster than the worst-case bounds characteristic of deterministic rounding. Furthermore, SR inherently prevents stagnation problems typical of traditional deterministic rounding modes~\citep{connolly2021stochastic,croci2023effects}, where numerous tiny updates to a comparatively large quantity are completely obliterated. 
%
%
For an extensive overview of the historical development, hardware implementations, and probabilistic error analysis of stochastic rounding, we refer the interested reader to the survey~\citep{croci2022stochastic}. The SIAM News article “Stochastic Rounding 2.0”\footnote{The article can also be found \href{https://www.siam.org/publications/siam-news/articles/stochastic-rounding-20-with-a-view-towards-complexity-analysis/}{here.}}~\citep{drineas2024stochastic} offers a broader view, connecting the use of low-precision arithmetic with emerging complexity analyses and proposing that quantization-aware techniques may shape the future theoretical landscape of numerical algorithms for ML/AI.

The aforementioned theoretical advantages of SR extend into the optimization of modern machine learning architectures, particularly in the large-scale training of Large Language Models (LLMs) under stringent numerical precision constraints. As the parameter counts of foundational models scale into the billions, the adoption of resource-efficient, low-precision arithmetic formats, such as BF16, FP8, and emerging 4-bit structures like MXFP4, has become critical~\citep{kalamkar2019study,micikevicius2022fp8,mishra2025recipes,tseng2025training}. In these very low-precision regimes, traditional deterministic rounding inherently introduces biased quantization noise, which distorts gradient calculations, hinders algorithmic convergence, and frequently causes training stagnation~\citep{xia2024stochastic}. Stochastic rounding, conversely, functions as an unbiased quantizer and yields accurate gradient estimates that effectively preserve critical update signals~\citep{zhaodirect}. Recent theoretical and empirical analyses demonstrate that SR fundamentally smooths the optimization landscapes of quantized objectives~\citep{kwun2025lotion} and acts as an implicit regularizer, yielding convergence bounds that seem to mitigate quantization errors~\citep{xia2025convergence,ozkara2025stochastic}.

We now discuss in more detail two prior works that are particularly relevant to this paper: A probabilistic analysis of stochastic rounding as an implicit regularizer of the smallest singular value, and a deterministic perturbation analysis for clusters of small singular values.

\paragraph{Stochastic rounding as an implicit regularizer.}
\citet{dexter2025stochastic} established the first probabilistic lower bound on $\sigma_d(\Abtil)$ after SR. Specialized to the uniformly dithered quantizer so that their entrywise bound $\mathcal{R}$ coincides with our resolution $\rho$, their main result (Theorem~4.1) states that if the entries of $\Eb$ are independent, centered, satisfy $\abr{\Eb_{ij}}\le\rho$, and have minimal normalized column variance $\nu$ as defined in Section~\ref{noise-model}, and if $n \ge 836$, then with probability at least $1 - n^{-c} - 2d^2/n^2$,
\begin{equation}\label{eq:dexter-bound}
    \sigma_d(\Abtil) \ge \rho\sqrt{n}\rbr{\sqrt{\nu} - \varepsilon_{n,d}}, \qquad \varepsilon_{n,d} := \sqrt{\tfrac{d}{n}} + 2d^2\sqrt{\tfrac{\log n}{n}} + \tfrac{C(\log n)^{2/3}}{n^{1/30}}\rbr{\tfrac{d}{n}}^{1/54}.
\end{equation}

Corollary~4.2 of \citet{dexter2025stochastic} requires $d = o\rbr{(n/\log n)^{1/4}}$. At $n = 10^6$ this caps $d$ by approximately $16$, enforcing an aspect ratio of at least $6\times 10^4$. The term $\nicefrac{(\log n)^{2/3}}{n^{1/30}}$ also decays slowly, and~\citet{dexter2025stochastic} themselves noted that $n > 10^{50}$ is required just to drop this factor below $1/2$.


\paragraph{Deterministic bounds for small singular value clusters.}
Motivated by the empirical observation that downcasting a matrix to lower arithmetic precision tends to lift its smallest singular values, \citet[Theorem~3.5]{boutsikas2024small} obtained a deterministic lower bound for a \emph{cluster} of small singular values under perturbation. Specialized to the present setting (Assumption~3.1 of~\citealp{boutsikas2024small}), let $\Ab \in \R^{n \times d}$ with $n \ge d$ have $\rank(\Ab) \ge d - r$ for some $r \ge 1$. Write the full SVD of $\Ab$ as $\Ab = \Ub\Sigmab\Vb^\ts$ with $\Ub \in \R^{n\times n}$, $\Vb \in \R^{d\times d}$ orthogonal and $\Sigmab \in \R^{n\times d}$ diagonal, and partition commensurately
\begin{equation*}
    \Sigmab = \begin{bmatrix} \Sigmab_1 & \mathbf{0} \\ \mathbf{0} & \Sigmab_2 \\ \mathbf{0} & \mathbf{0} \end{bmatrix}, \qquad
    \Eb = \Ub \begin{bmatrix} \Eb_{11} & \Eb_{12} \\ \Eb_{21} & \Eb_{22} \\ \Eb_{31} & \Eb_{32} \end{bmatrix} \Vb^\ts,
\end{equation*}
where $\Sigmab_1 \in \R^{(d-r)\times(d-r)}$ is nonsingular diagonal and $\Sigmab_2 \in \R^{r\times r}$ is diagonal. If $1/\twonorm{\Sigmab_1^{-1}} > 4\twonorm{\Eb}$ and $\twonorm{\Sigmab_2} < \twonorm{\Eb}$, then for all $1 \le j \le r$,
\begin{equation}\label{eq:boutsikas-bound}
    \sigma_{d-r+j}(\Ab + \Eb)^2 \ge \lambda_j\rbr{\Eb_{32}^\ts \Eb_{32} + (\Sigmab_2 + \Eb_{22})^\ts(\Sigmab_2 + \Eb_{22}) - \Rb_3} - r_4,
\end{equation}
where $\Rb_3$ and $r_4$ collect third-order and fourth-and-higher order terms in $\Eb$.

Motivated by this result, our second contribution examines the same setting, namely a cluster of small singular values separated from the rest by a spectral gap. However, we approach the problem from the perspective of stochastic rounding. Instead of providing a deterministic lower bound for each singular value in the small cluster, we analyze the \textbf{expected excess tail energy} (as defined in~\eqref{eqn:Tdef}) directly. 
%
%
We show that the leading term of the error corresponds to the sum of the $r$ smallest column variances of the noise matrix $\Eb$, namely\footnote{Up to lower-order terms.} 
$\sum_{j=d-r+1}^d \nu_j^\downarrow$, using the notation of Section~\ref{noise-model}. This provides a much more interpretable characterization than the bound given in \eqref{eq:boutsikas-bound}.

\subsection{The Uniformly Dithered Quantizer}\label{app:quantizer}

We adopt the same definition as in Section~2.1 of~\citep{saha2023matrix}. We first define the quantizer for a scalar $x$. Given a dynamic range $R$ (with $|x| \le R$) and a bit-budget $B$, the quantizer uses $M = 2^B$ distinct quantization points $q_1, q_2, \dots, q_M$, evenly spaced between $-R$ and $R$ with resolution $\rho = \frac{2R}{M-1}$.

The quantization operation $Q_{\rho}(x)$ is stochastic: for a value $x \in [q_k, q_{k+1}]$, the output is determined probabilistically by
\begin{equation}
    Q_{\rho}(x) =
    \begin{cases}
        q_{k+1} & \text{with probability } p, \\
        q_k     & \text{with probability } 1-p,
    \end{cases}
\end{equation}
where $p = \frac{x-q_k}{\rho}$.

Before stating the properties of the quantizer, we recall the \emph{sub-gaussian norm} of a real-valued random variable $X$~\citep[Definition~2.6.4]{Vershynin_2026}:
\begin{equation}\label{def:subn}
    \norm{X}_{\psi_2} := \inf\cbr{t > 0 : \Ex{\exp(X^2/t^2)} \le 2}.
\end{equation}

The quantizer has the following four statistical properties:
\begin{itemize}
    \item \textbf{Unbiased}: the quantizer is unbiased,
    \begin{equation}
        \Ex{Q_{\rho}(x)} = x.
        \label{prop:quant-unbiased}
    \end{equation}
    \item \textbf{Bounded Error}: the quantization error is bounded by the resolution,
    \begin{equation}
        \abr{Q_{\rho}(x) - x} \le \rho.
        \label{prop:quant-bounded}
    \end{equation}
    \item \textbf{Bounded Variance}: the variance of the quantization error is bounded,
    \begin{equation}
        \Ex{(Q_{\rho}(x) - x)^2} \le \frac{\rho^2}{4}.
        \label{prop:quant-var}
    \end{equation}
    \item \textbf{Sub-Gaussian}: the quantization error is sub-gaussian,
    \begin{equation}
        \norm{Q_{\rho}(x) - x}_{\psi_2} \le C\rho,
        \label{prop:quant-subg}
    \end{equation}
    for an absolute constant $C = 1/\sqrt{\log 2}$.
\end{itemize}

\begin{proof}
    Properties~\eqref{prop:quant-unbiased} and~\eqref{prop:quant-var} are established in Lemma~C.1 of~\citep{saha2023matrix}; property~\eqref{prop:quant-bounded} is immediate from the definition; property~\eqref{prop:quant-subg} is proven in Appendix~\ref{app:quant-subg}.
\end{proof}
\subsection{Proof of the Sub-Gaussian Property of the Quantization Error}
\label{app:quant-subg}

Here we prove property~\eqref{prop:quant-subg}: the quantization error $Q_{R,B}(x) - x$ is sub-gaussian with $\norm{Q_{R,B}(x) - x}_{\psi_2} \le C\rho$.

\begin{proof}
Recall that the sub-gaussian norm is defined in~\eqref{def:subn}.
If $\abr{X}\le\rho$, let $t=\frac{\rho}{\sqrt{\log 2}}$, we get
\begin{equation}
    \frac{X^2}{t^2}\le \log 2,
\end{equation}
and therefore
\begin{equation}
    \Ex{\exp(X^2/t^2)}\le 2.
\end{equation}
Hence
\begin{equation}
    \norm{X}_{\psi_2}\le \frac{\rho}{\sqrt{\log 2}}\le C\rho.
\end{equation}
In particular, applying this to $X = Q_{R,B}(x) - x$, which satisfies $\abr{X} \le \rho$ by~\eqref{prop:quant-bounded}, we obtain
\begin{equation}
    \norm{Q_{R,B}(x) - x}_{\psi_2}\le C\rho.
\end{equation}
Applying this to each entry of $\Eb$ gives $\norm{\Eb_{ij}}_{\psi_2} \le C\rho$ for all $i, j$.
\end{proof}

\subsection{Proof of Theorem~\ref{prop:BVH}}\label{app:BVH_proof}
\begin{proof}
Recall that $\rho$ is the quantization resolution. Set $\Yb := \Eb/\rho$. Then $\Yb \in \R^{n \times d}$ has independent centered entries with $\abr{\Yb_{ij}} \le 1$.  For each entry
$\Ab_{ij}$, let $q_{ij}$ be the lower grid point such that
$\Ab_{ij}\in [q_{ij},q_{ij}+\rho]$, and set
\[
    p_{ij}:=\frac{\Ab_{ij}-q_{ij}}{\rho}\in[0,1].
\]
Under the uniformly dithered quantizer,
\[
    \Abtil_{ij}=q_{ij}+\rho B_{ij},
    \qquad B_{ij}\sim \mathrm{Bernoulli}(p_{ij}),
\]
with the variables $B_{ij}$ independent over $(i,j)$. Hence
\[
    \Eb_{ij}=\Abtil_{ij}-\Ab_{ij}
    =\rho(B_{ij}-p_{ij}),
\]
and therefore, with $\Yb:=\Eb/\rho$,
\[
    \Yb_{ij}=B_{ij}-p_{ij}.
\]
In particular, the entries of $\Yb$ are independent, centered, with variance $p_{ij}(1-p_{ij})$.
Thus $\Yb$ is a nonhomogeneous centered Bernoulli matrix. Applying
\citep[Corollary~3.18]{brailovskaya2024universality} to $\Yb$ with variance
parameters
\[
    \min_{1\le j\le d}\sum_{i=1}^n \Var(\Yb_{ij}) = n\nu,
    \qquad
    \max_{1\le i\le n}\sum_{j=1}^d \Var(\Yb_{ij}) = \mu,
\]
and
\[
    \max\left\{
        \max_{1\le i\le n}\sum_{j=1}^d \Var(\Yb_{ij}),
        \max_{1\le j\le d}\sum_{i=1}^n \Var(\Yb_{ij})
    \right\}
    =
    \kappa
\]
yields the claim.
\end{proof}
\subsection{Proof of Lemma~\ref{lem:PAE}}\label{app:PAE}

The proof uses two standard tools from high-dimensional probability.

\begin{lemma}[Two-net lemma, {\citealp[Lemma~4.4.2]{Vershynin_2026}}]\label{lem:two-net}
Let $\Yb \in \R^{d_1 \times d_2}$ and $\varepsilon \in [0, 1/2)$. For any $\varepsilon$-nets $\mathcal{N}_1 \subset \mathbb{S}^{d_1 - 1}$ and $\mathcal{N}_2 \subset \mathbb{S}^{d_2 - 1}$,
\begin{equation*}
    \twonorm{\Yb} \le \frac{1}{1 - 2\varepsilon}\max_{\xb \in \mathcal{N}_2,\, \yb \in \mathcal{N}_1} \abr{\yb^\top \Yb \xb}.
\end{equation*}
\end{lemma}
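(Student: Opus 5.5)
The statement just above is the two-net lemma. We are to prove that for $\varepsilon$-nets $\mathcal N_1\subset\mathbb S^{d_1-1}$ and $\mathcal N_2\subset\mathbb S^{d_2-1}$ with $\varepsilon<1/2$,
\[
\twonorm{\Yb}\le (1-2\varepsilon)^{-1}\max_{\xb \in \mathcal{N}_2,\, \yb \in \mathcal{N}_1}\abr{\yb^\top\Yb\xb}.
\]
The plan is a deterministic approximation argument: start from the variational characterization of the norm, replace each maximizing unit vector by a nearby net point, and absorb the resulting errors into the left-hand side.

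\textbf{Step 1 (maximizers).} Write $\twonorm{\Yb}=\max_{\xb\in\mathbb S^{d_2-1},\,\yb\in\mathbb S^{d_1-1}}\yb^\top\Yb\xb$. The maximum is attained by compactness of the spheres. Fix a maximizing pair $(\xb^*,\yb^*)$.

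\textbf{Step 2 (net points).} By the net property, choose $\xb_0\in\mathcal N_2$ and $\yb_0\in\mathcal N_1$ with $\|\xb^*-\xb_0\|_2\le\varepsilon$ and $\|\yb^*-\yb_0\|_2\le\varepsilon$.

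\textbf{Step 3 (telescoping).} Split the bilinear form as
\[
\yb^{*\top}\Yb\xb^*=\yb_0^\top\Yb\xb_0+\yb_0^\top\Yb(\xb^*-\xb_0)+(\yb^*-\yb_0)^\top\Yb\xb^*.
\]
Bound each correction by Cauchy--Schwarz together with the definition of the operator norm. Since $\|\yb_0\|_2=1$, $\|\xb^*\|_2=1$, and both differences have norm at most $\varepsilon$, each correction is at most $\varepsilon\twonorm{\Yb}$. This gives
\[
\twonorm{\Yb}\le \abr{\yb_0^\top\Yb\xb_0}+2\varepsilon\twonorm{\Yb}.
\]

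\textbf{Step 4 (rearrange).} Because $\varepsilon<1/2$, we have $1-2\varepsilon>0$, so we can divide through by it. Then bound $\abr{\yb_0^\top\Yb\xb_0}$ by the maximum over all net pairs, which yields the claim.

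There is no real obstacle here. The only point that needs care is Step 3: each correction term must be bounded by $\twonorm{\Yb}$ itself, not by some auxiliary quantity, so that it can be absorbed into the left-hand side. This is also exactly why the hypothesis $\varepsilon<1/2$ is needed.
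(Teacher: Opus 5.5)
Your proof is correct and is the standard approximation argument behind this result. The paper does not prove the lemma itself but simply invokes Vershynin's Lemma~4.4.2, whose argument is the same: take a maximizing pair, move to nearby net points, bound the two correction terms by $\varepsilon\twonorm{\Yb}$ each, and absorb them using $\varepsilon<1/2$.
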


\begin{lemma}[Hoeffding's lemma, {\citealp[Lemma~2.2]{Con_ineq_book}}]\label{lem:hoef}
Let $X$ be a real-valued random variable with $\Ex{X} = 0$ and $a \le X \le b$ almost surely. Then for all $\lambda \in \R$,
\begin{equation*}
    \Ex{\exp(\lambda X)} \le \exp\rbr{\frac{\lambda^2 (b-a)^2}{8}}.
\end{equation*}
\end{lemma}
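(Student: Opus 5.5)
The statement immediately above is Hoeffding's lemma: for a centered random variable $X$ with $a\le X\le b$, we have $\Ex{e^{\lambda X}}\le \exp(\lambda^2(b-a)^2/8)$. I would prove it by the classical convexity-plus-cumulant argument.

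\emph{Step 1 (convexity).} Since $x\mapsto e^{\lambda x}$ is convex, for $x\in[a,b]$ we have
\[
e^{\lambda x}\le \frac{b-x}{b-a}e^{\lambda a}+\frac{x-a}{b-a}e^{\lambda b}.
\]
Taking expectations and using $\Ex{X}=0$ gives
\[
\Ex{e^{\lambda X}}\le \frac{b}{b-a}e^{\lambda a}-\frac{a}{b-a}e^{\lambda b}.
\]
If $a=b$, then $X=0$ almost surely and the claim is trivial. Otherwise $a\le 0\le b$ with $a<b$.

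\emph{Step 2 (reparametrize).} Set $p:=-a/(b-a)\in[0,1]$ and $h:=\lambda(b-a)$. The right-hand side then equals $e^{L(h)}$, where
\[
L(h):=-hp+\log\rbr{1-p+pe^{h}}.
\]
Direct computation gives $L(0)=0$ and
\[
L'(h)=-p+\frac{pe^h}{1-p+pe^h},
\]
so $L'(0)=0$.

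\emph{Step 3 (second derivative).} Write $q:=\frac{pe^h}{1-p+pe^h}\in[0,1]$. Then $L''(h)=q(1-q)\le 1/4$.

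\emph{Step 4 (Taylor).} Taylor's theorem with Lagrange remainder gives $L(h)\le h^2/8=\lambda^2(b-a)^2/8$, which is the claim.

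The only mildly delicate step is recognizing that $L''$ has the form $q(1-q)$, i.e.\ the variance of a Bernoulli$(q)$ variable; this gives the uniform bound $1/4$. The remaining steps are routine calculus.
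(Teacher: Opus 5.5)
Your proof is correct. The reparametrization $p=-a/(b-a)$, $h=\lambda(b-a)$ does turn the two-point bound into $e^{L(h)}$. The identity $L''(h)=q(1-q)$ with $q=pe^h/(1-p+pe^h)$ is right. Since $h$ ranges over all of $\R$, the Lagrange-remainder step covers both signs of $\lambda$. One ordering fix: dispose of the case $a=b$ before dividing by $b-a$ in Step~1.

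The paper itself gives no proof of this lemma; it simply imports it from \citet[Lemma~2.2]{Con_ineq_book}. The argument there, as I recall it, runs differently:
\begin{itemize}
\item It works directly with $\psi(\lambda)=\log\Ex{e^{\lambda X}}$.
\item It observes that $\psi''(\lambda)$ is the variance of $X$ under the exponentially tilted law $e^{\lambda x-\psi(\lambda)}\,dP(x)$.
\item That tilted law is still supported on $[a,b]$, so its variance is at most $(b-a)^2/4$.
\item It concludes with the same Taylor step, using $\psi(0)=\psi'(0)=0$.
\end{itemize}

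Your convexity step first replaces $X$ by the extremal mean-zero two-point law on $\{a,b\}$. After that everything is explicit one-variable calculus, and the variance bound appears only as $q(1-q)\le 1/4$ for a Bernoulli variable. This makes your route fully elementary and identifies the worst-case distribution.

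The tilting route avoids the reduction. It yields the stronger pointwise curvature bound $\psi''\le (b-a)^2/4$ for the actual distribution of $X$, which is useful elsewhere, e.g.\ in entropy-method arguments. In exchange, it needs the easy fact that any law on $[a,b]$ has variance at most $(b-a)^2/4$.

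Either proof serves the paper's purposes, since it only uses the conclusion. That happens in the proof of Lemma~\ref{lem:PAE} with $X=\ub_i\Eb_{ij}\in[-\rho|\ub_i|,\rho|\ub_i|]$.
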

\begin{proof}[Proof of Lemma~\ref{lem:PAE}]
Let $r := \rank(\Ab)$ and let $\Ub \in \R^{n \times r}$ have orthonormal columns spanning $\mathcal{R}(\Ab)$, so that $\Pb_\Ab = \Ub\Ub^\top$ and $\Ub^\top \Ub = \Ib_r$. Hence
\begin{equation*}
    \twonorm{\Pb_\Ab \Eb} = \twonorm{\Ub\Ub^\top \Eb} = \twonorm{\Ub^\top \Eb}.
\end{equation*}
Set
\begin{equation*}
    \Yb := \Ub^\top \Eb \in \R^{r \times d}.
\end{equation*}
We now bound $\twonorm{\Yb}$. Fix $\xb \in \mathbb{S}^{d-1}$ and $\yb \in \mathbb{S}^{r-1}$, and write $\ub := \Ub\yb \in \R^n$, so that $\twonorm{\ub} = 1$. Denote the $j$-th column of $\Eb$ by $\Eb_{\cdot j}$ and define
\begin{equation*}
    \xi_j := \ub^\top \Eb_{\cdot j} = \sum_{i=1}^{n} \ub_i \, \Eb_{ij}.
\end{equation*}
Because the entries of $\Eb$ are independent and centered, the variables $\xi_1, \dots, \xi_d$ are independent and centered.

We claim that each $\xi_j$ for $1 \le j \le d$ is sub-gaussian. Indeed, since $\ub_i \Eb_{ij}$ is centered and lies in the interval $[-\rho \abr{\ub_i}, \rho \abr{\ub_i}]$, Lemma~\ref{lem:hoef} gives, for every $\lambda \in \R$,
\begin{equation*}
    \Ex{\exp(\lambda \, \ub_i \Eb_{ij})} \le \exp\rbr{\frac{\lambda^2 \rho^2 \ub_i^2}{2}}.
\end{equation*}
By independence over $i$ and $\sum_{i=1}^{n} \ub_i^2 = 1$,
\begin{equation*}
    \Ex{\exp(\lambda \xi_j)}
    = \prod_{i=1}^{n} \Ex{\exp(\lambda \, \ub_i \Eb_{ij})}
    \le \exp\rbr{\frac{\lambda^2 \rho^2}{2} \sum_{i=1}^{n} \ub_i^2}
    = \exp\rbr{\frac{\lambda^2 \rho^2}{2}}.
\end{equation*}
Thus each $\xi_j$ is centered and sub-gaussian with proxy $\rho^2$.

Now
\begin{equation*}
    \yb^\top \Yb \xb = \yb^\top \Ub^\top \Eb \xb = \ub^\top \Eb \xb = \sum_{j=1}^{d} \xb_j \xi_j.
\end{equation*}
Applying the same moment-generating-function computation, now to the independent variables $\xi_j$ with $\sum_{j=1}^{d} \xb_j^2 = 1$, we obtain
\begin{equation*}
    \Ex{\exp(\lambda \yb^\top \Yb \xb)}
    = \prod_{j=1}^{d} \Ex{\exp(\lambda \xb_j \xi_j)}
    \le \exp\rbr{\frac{\lambda^2 \rho^2}{2} \sum_{j=1}^{d} \xb_j^2}
    = \exp\rbr{\frac{\lambda^2 \rho^2}{2}}.
\end{equation*}

By Markov's inequality, \begin{align}
    \pr{\yb^\top \Yb \xb > t} &= \pr{\exp \rbr{\lambda \yb^\top \Yb \xb} > \exp \rbr{\lambda t}} \le \frac{\Ex{\exp(\lambda \yb^\top \Yb \xb)}}{ \exp \rbr{\lambda t}}\le \exp \rbr{  \frac{\lambda^2 \rho^2}{2} - \lambda t}.
\end{align}
Optimizing in $\lambda$ yields:
\begin{equation}
    \pr{\yb^\top \Yb \xb > t} \le \exp\rbr{-\frac{t^2}{2 \rho^2}}.
\end{equation}
Bounding the other side of the tail symmetrically and taking a union bound, we get,
for every fixed $\xb \in \mathbb{S}^{d-1}$, $\yb \in \mathbb{S}^{r-1}$ and every $t \ge 0$,
\begin{equation}\label{eq:chernoff-PAE}
    \pr{\abr{\yb^\top \Yb \xb} > t} \le 2 \exp\rbr{-\frac{t^2}{2 \rho^2}}.
\end{equation}

Choose $1/4$-nets $\mathcal{N}_d \subset \mathbb{S}^{d-1}$ and $\mathcal{N}_r \subset \mathbb{S}^{r-1}$ with $\abr{\mathcal{N}_d} \le 9^d$ and $\abr{\mathcal{N}_r} \le 9^r$~\citep[Corollary~4.2.11]{Vershynin_2026}. By Lemma~\ref{lem:two-net} with $\varepsilon = 1/4$ and a union bound applied to~\eqref{eq:chernoff-PAE},
\begin{equation*}
    \pr{\twonorm{\Yb} > 2t}
    \le \pr{\max_{\xb \in \mathcal{N}_d, \, \yb \in \mathcal{N}_r} \abr{\yb^\top \Yb \xb} > t}
    \le 2 \cdot 9^{d+r} \exp\rbr{-\frac{t^2}{2 \rho^2}}.
\end{equation*}
Now choose
\begin{equation*}
    t := C \rho \rbr{\sqrt{d+r} + u}
\end{equation*}
with $C > 0$ large enough. Since $(a+b)^2 \ge a^2 + b^2$ for $a, b \ge 0$,
\begin{equation*}
    \frac{t^2}{2 \rho^2} \ge C_1 (d+r) + C_2 u^2
\end{equation*}
for some positive constants $C_1, C_2$. Choosing $C_1 > \log 9$ yields
\begin{equation*}
    2 \cdot 9^{d+r} \exp\rbr{-\frac{t^2}{2 \rho^2}} \le 2 e^{-c_0 u^2}
\end{equation*}
for some absolute $c_0 > 0$. This proves
\begin{equation*}
    \pr{\twonorm{\Pb_\Ab \Eb} = \twonorm{\Yb} > 2 C \rho \rbr{\sqrt{d+r} + u}} \le 2 e^{-c_0 u^2}.
\end{equation*}
\end{proof}

\subsection{Proof of Corollary~\ref{cor:rectangular}}\label{app:rectangular}

\begin{proof}
We first verify that the condition $\kappa\ge \log n$ in
Theorem~\ref{thm:combined}.
Since $\bar\nu\ge\nu\ge\nu_0$, for sufficiently large $n$
\[
    \kappa=\max\{\mu,n\bar\nu\}\ge n\bar\nu\ge n\nu_0\ge \log n.
\]
Since $r = \rank(\Ab) \le d$, we have $\sqrt{d+r} \le \sqrt{2d}$, and Theorem~\ref{thm:combined} yields the rank-free relaxation
\begin{equation*}
    \pr{\sigma_d(\Abtil) \ge \rho\rbr{\sqrt{n\nu} - \sqrt{\mu} - C_a\,\kappa^{1/3}(\log n)^{2/3} - C_1\sqrt{2}\rbr{\sqrt{d}+u}}} \ge 1 - n^{-a} - 2 e^{-c_1 u^2}.
\end{equation*}
Set $u := c_1^{-1/2}\sqrt{a\log n}$ in this inequality, so that $2 e^{-c_1 u^2} = 2 n^{-a}$ and the total failure probability is at most $3 n^{-a}$. Absorbing $\sqrt{2}$ into the constant, on an event of probability at least $1 - 3n^{-a}$,
\begin{equation}\label{eq:rect-step1}
    \sigma_d(\Abtil) \ge \rho\rbr{\sqrt{n\nu} - \sqrt{\mu} - C_a\,\kappa^{1/3}(\log n)^{2/3} - \widetilde{C}_1\sqrt{d} - C_1'\sqrt{\log n}},
\end{equation}
where $\widetilde{C}_1 := C_1 \sqrt{2}$ and $C_1' := C_1 \sqrt{2} c_1^{-1/2}\sqrt{a}$.

We now verify that each subtracted term on the right-hand side is asymptotically dominated by the leading $\sqrt{n\nu}$ term under the assumptions of the corollary.

\paragraph{Lower bound on the leading term.} From $\nu \ge \nu_0$, $\sqrt{n\nu} \ge \sqrt{n\nu_0}$.

\paragraph{Bound on $\sqrt{\mu}$.} Since $\Var(\Eb_{ij}) \le \rho^2/4$ by~\eqref{prop:quant-var}, the definition $\mu := \rho^{-2}\max_i \sum_j \Var(\Eb_{ij})$ gives $\mu \le d/4$. Combined with $d \le \eta_0 n$, this yields $\sqrt{\mu} \le \tfrac{1}{2}\sqrt{\eta_0 n}$.

\paragraph{Bound on the $\kappa^{1/3}(\log n)^{2/3}$ term.} By the same variance bound, $\bar\nu \le 1/4$, so $n\bar\nu \le n/4$. Similarly $\mu \le d/4 \le n/4$. Hence $\kappa := \max\cbr{\mu, n\bar\nu} \le n/4$, and $\kappa^{1/3}(\log n)^{2/3} \le (n/4)^{1/3}(\log n)^{2/3} = o(\sqrt{n})$ as $n \to \infty$.

\paragraph{Bound on $\sqrt{d}$.} Under $d \le \eta_0 n$, $\sqrt{d} \le \sqrt{\eta_0 n}$.

\paragraph{Bound on $\sqrt{\log n}$.} Trivially $\sqrt{\log n} = o(\sqrt{n})$.

Combining these bounds, the right-hand side of~\eqref{eq:rect-step1} is at least
\begin{equation*}
    \rho\rbr{\sqrt{n\nu_0} - \tfrac{1}{2}\sqrt{\eta_0 n} - \widetilde{C}_1\sqrt{\eta_0 n} - o(\sqrt{n})} \;=\; \rho\sqrt{n}\rbr{\sqrt{\nu_0} - \rbr{\tfrac{1}{2} + \widetilde{C}_1}\sqrt{\eta_0} - o(1)}.
\end{equation*}
Choose $\eta_0 = \eta_0(a, \nu_0) > 0$ small enough that $\rbr{\tfrac{1}{2} + \widetilde{C}_1}\sqrt{\eta_0} \le \tfrac{1}{2}\sqrt{\nu_0}$, and choose $n_0 = n_0(a, \nu_0)$ large enough that the $o(1)$ term is at most $\tfrac{1}{4}\sqrt{\nu_0}$ for all $n \ge n_0$. Then on the same event, $\sigma_d(\Abtil) \ge c_0\,\rho\sqrt{n}$ with $c_0 := \tfrac{1}{4}\sqrt{\nu_0}$, establishing~\eqref{eq:rectangular-bound}.
\end{proof}
\subsection{Proof of Theorem~\ref{thm:frobenius-energy}}\label{app:frobenius-energy}

\begin{proof}
For any $\Mb \in \R^{n\times d}$, $\sum_{i=1}^{d}\sigma_i(\Mb)^2 = \|\Mb\|_F^2 = \tr(\Mb^\top \Mb)$. Therefore,
\begin{align*}
    \Ex{\sum_{i=1}^{d}\sigma_i(\Abtil)^2}
    &= \Ex{\|\Abtil\|_F^2} = \Ex{\tr\rbr{(\Ab+\Eb)^\top(\Ab+\Eb)}}\\
    &= \tr(\Ab^\top \Ab) + \Ex{\tr(\Ab^\top \Eb)} + \Ex{\tr(\Eb^\top \Ab)} + \Ex{\tr(\Eb^\top \Eb)}.
\end{align*}
Since the entries of $\Eb$ are centered, $\Ex{\tr(\Ab^\top\Eb)} = \Ex{\tr(\Eb^\top\Ab)} = 0$. Hence
\begin{equation*}
    \Ex{\sum_{i=1}^{d}\sigma_i(\Abtil)^2} = \tr(\Ab^\top\Ab) + \Ex{\tr(\Eb^\top\Eb)} = \sum_{i=1}^{d}\sigma_i(\Ab)^2 + \Ex{\|\Eb\|_F^2}.
\end{equation*}
\end{proof}

\subsection{Proof of Lemma~\ref{lem:topk-series}}\label{app:topk-series}

\begin{proof}
\paragraph{Step 1: Top-$k$ energy of $\Ab$ as a contour integral.}
Applied to $\Gb$, the trace contour-integral identity~\eqref{eq:trace_contour} gives
\begin{equation}\label{eq:topk-A}
    \sum_{i=1}^{k}\sigma_i(\Ab)^2 \;=\; \frac{1}{2\pi \mathrm{i}}\oint_\Gamma z\,\tr\rbr{\Rb_\Gb(z)}\,dz.
\end{equation}

\paragraph{Step 2: $\Gamma$ also separates the top-$k$ eigenvalues of $\Gbtil$.}
By Weyl's inequality, $\abr{\lambda_i(\Gbtil) - \lambda_i(\Gb)} \le \twonorm{\Deltab} < g/2$ for every $i$. Since each point of $\Gamma$ is at distance at least $g/2$ from $\operatorname{spec}(\Gb)$, the contour $\Gamma$ continues to enclose exactly $\lambda_1(\Gbtil),\dots,\lambda_k(\Gbtil)$ and to exclude $\lambda_{k+1}(\Gbtil),\dots,\lambda_d(\Gbtil)$. Applying~\eqref{eq:trace_contour} to $\Gbtil$ along the same $\Gamma$,
\begin{equation}\label{eq:topk-Atil}
    \sum_{i=1}^{k}\sigma_i(\Abtil)^2 \;=\; \frac{1}{2\pi \mathrm{i}}\oint_\Gamma z\,\tr\rbr{\Rb_{\Gbtil}(z)}\,dz.
\end{equation}

\paragraph{Step 3: Subtraction.}
Subtracting~\eqref{eq:topk-A} from~\eqref{eq:topk-Atil} and using linearity of the trace,
\begin{equation}\label{eq:topk-shift}
    \sum_{i=1}^{k}\sigma_i(\Abtil)^2 - \sum_{i=1}^{k}\sigma_i(\Ab)^2 \;=\; \frac{1}{2\pi \mathrm{i}}\oint_\Gamma z\,\tr\rbr{\Rb_{\Gbtil}(z) - \Rb_\Gb(z)}\,dz.
\end{equation}

\paragraph{Step 4: Neumann expansion.}
Combining the hypothesis $\twonorm{\Deltab}<g/2$ with the resolvent-norm bound~\eqref{eq:resolvent-norm-Gamma},
\begin{equation}\label{eq:RGD-bound}
    \twonorm{\Rb_\Gb(z)\Deltab} \;\le\; \twonorm{\Rb_\Gb(z)}\,\twonorm{\Deltab} \;\le\; \tfrac{2}{g}\,\twonorm{\Deltab} \;<\; 1, \qquad z\in\Gamma.
\end{equation}
Hence the Neumann series~\eqref{eq:neumann} converges and
\begin{equation}\label{eq:resolvent-diff}
    \Rb_{\Gbtil}(z) - \Rb_\Gb(z) \;=\; \sum_{\ell=1}^{\infty}\rbr{\Rb_\Gb(z)\Deltab}^\ell \Rb_\Gb(z).
\end{equation}

\paragraph{Step 5: Interchange integral and series}

Substituting the Neumann series~\eqref{eq:neumann} into~\eqref{eq:topk-shift} yields:
\begin{align}
     \sum_{i=1}^{k}\sigma_i(\Abtil)^2 - \sum_{i=1}^{k}\sigma_i(\Ab)^2 \;&=\; \frac{1}{2\pi \mathrm{i}}\oint_\Gamma z\,\tr\rbr{\sum_{\ell=1}^{\infty}\rbr{\Rb_\Gb(z)\Deltab}^\ell \Rb_\Gb(z)}\,dz \\
     &= \; \frac{1}{2\pi \mathrm{i}}\oint_\Gamma \sum_{\ell=1}^{\infty} \,z\,\tr\rbr{\rbr{\Rb_\Gb(z)\Deltab}^\ell \Rb_\Gb(z)}\,dz.
\end{align}

Denote $f_m(z) := \sum_{\ell = 1}^m z\,\tr\rbr{\rbr{\Rb_\Gb(z)\Deltab}^\ell \Rb_\Gb(z)}.$ Using $\abr{\tr(\Mb)}\le d\twonorm{\Mb}$, we get:
\begin{align}
    \abr{f_m(z)} \le \abr{z}  d  \sum_{\ell = 1}^m \twonorm{\rbr{\Rb_\Gb(z)\Deltab}^\ell \Rb_\Gb(z)}.
\end{align}

By~\eqref{eq:resolvent-norm-Gamma} and triangle inequality,
\begin{equation}
    \abr{f_m(z)} \le \abr{z}\, d \, \frac{2}{g} \sum_{\ell = 1}^m \twonorm{\Rb_\Gb(z)\Deltab} ^\ell,
\end{equation}

Since $\twonorm{\Rb_\Gb(z)\Deltab} < 1$, the geometric series is finite:
\begin{equation}
    \sum_{\ell = 1}^m \twonorm{\Rb_\Gb(z)\Deltab} ^\ell \leq \sum_{\ell = 1}^\infty \twonorm{\Rb_\Gb(z)\Deltab} ^\ell = \frac{\twonorm{\Rb_\Gb(z)\Deltab}}{1-\twonorm{\Rb_\Gb(z)\Deltab}}.
\end{equation}

Hence, 
\begin{equation}
    \abr{f(z)} \le h(z) := \abr{z}\, d \, \frac{2}{g} \frac{\twonorm{\Rb_\Gb(z)\Deltab}}{1-\twonorm{\Rb_\Gb(z)\Deltab}}.
\end{equation}

By the boundedness of $\Gamma$, $h(z)$ is clearly integrable. The Lebesgue's dominated convergence theorem then gives:
\begin{equation}
     \sum_{i=1}^{k}\sigma_i(\Abtil)^2 - \sum_{i=1}^{k}\sigma_i(\Ab)^2 \;= \; \frac{1}{2\pi \mathrm{i}}\sum_{\ell=1}^{\infty}\oint_\Gamma  \,z\,\tr\rbr{\rbr{\Rb_\Gb(z)\Deltab}^\ell \Rb_\Gb(z)}\,dz.
\end{equation}

Finally, taking expectation yields:
\begin{equation*}
    \Ex{\sum_{i=1}^{k}\sigma_i(\Abtil)^2} - \sum_{i=1}^{k}\sigma_i(\Ab)^2 \;=\; \sum_{\ell=1}^{\infty} I_\ell, \qquad I_\ell = \frac{1}{2\pi \mathrm{i}}\oint_\Gamma z\,\Ex{\tr\rbr{\rbr{\Rb_\Gb(z)\Deltab}^\ell \Rb_\Gb(z)}}\,dz.
\end{equation*}
\end{proof}

\subsection{Proof of Lemma~\ref{lem:I1}}\label{app:I1}

\begin{proof}
By linearity and $\Ex{\Deltab}=\Db$ (Section~\ref{noise-model}),
\begin{equation*}
    I_1 = \frac{1}{2\pi \mathrm{i}}\oint_\Gamma z\,\tr\rbr{\Rb_\Gb(z)\Db\Rb_\Gb(z)}\,dz = \tr\rbr{\frac{1}{2\pi \mathrm{i}}\oint_\Gamma z\,\Rb_\Gb(z)^2\,dz \;\Db}.
\end{equation*}
Writing $\Vb = [\vb_1,\dots,\vb_d]$,
\begin{equation*}
    \Rb_\Gb(z)^2 = \Vb\diag\rbr{(z-\lambda_1)^{-2},\dots,(z-\lambda_d)^{-2}}\Vb^\ts.
\end{equation*}
Since
\begin{equation}
    \operatorname*{Res}_{z=\lambda_i}\frac{z}{(z-\lambda_i)^2} = \left.\tfrac{d}{dz}z\right|_{z=\lambda_i}=1,
\end{equation}
Cauchy's integral formula gives
\begin{equation}
    \frac{1}{2\pi \mathrm{i}}\oint_\Gamma \frac{z}{(z-\lambda_i)^2}dz = \mathbbm{1}_{i\le k}.
\end{equation}
Therefore 
\begin{equation}
    \frac{1}{2\pi \mathrm{i}}\oint_\Gamma z\,\Rb_\Gb(z)^2 dz = \Vb_k\Vb_k^\ts,
\end{equation}
with 
\begin{equation}
    \Vb_k=[\vb_1,\dots,\vb_k],
\end{equation}
and
\begin{equation*}
    I_1 = \tr\rbr{\Vb_k\Vb_k^\ts \Db} = \tr\rbr{\Vb_k^\ts \Db \Vb_k}.
\end{equation*}
Ky Fan's maximum principle~\citep[Theorem 1]{fan1949theorem} gives
\begin{equation*}
    \tr\rbr{\Vb_k^\ts \Db \Vb_k} \le \max_{\Qb\in\R^{d\times k}\,:\,\Qb^\ts \Qb=\Ib_k}\tr\rbr{\Qb^\ts \Db \Qb} = \sum_{j=1}^{k}\nu_j^\downarrow,
\end{equation*}
where $\nu_1^\downarrow \ge \cdots \ge \nu_d^\downarrow$ is the decreasing rearrangement of $\nu_1,\dots,\nu_d$.
\end{proof}

\subsection{Proof of Lemma~\ref{lem:I2}}\label{app:I2}

\begin{proof}
\paragraph{Step 1: Residue calculus.}
Using cyclicity and linearity of trace plus the spectral decomposition~\eqref{eq:resolvent_decomp} $\Rb_\Gb(z) = \sum_{i=1}^d\frac{\vb_i\vb_i^\ts}{z-\lambda_i}$,
\begin{align*}
    &\tr\rbr{\Rb_\Gb(z) \Deltab \Rb_\Gb(z)\Deltab \Rb_\Gb(z)}\\
    &= \tr\rbr{\Rb_\Gb(z)^2 \Deltab \Rb_\Gb(z)\Deltab}\\
    &= \tr \rbr{\sum_{i,j=1}^{d}  \frac{\vb_i \vb_i^\ts \Deltab  \vb_j \vb_j^\ts \Deltab}{(z-\lambda_i)^2(z-\lambda_j)}}\\
    &=\sum_{i,j=1}^{d} \tr \rbr{  \frac{\vb_i \vb_i^\ts \Deltab  \vb_j \vb_j^\ts \Deltab}{(z-\lambda_i)^2(z-\lambda_j)}}\\
    &= \sum_{i,j=1}^{d}\frac{(\vb_i^\ts\Deltab\vb_j)(\vb_j^\ts\Deltab\vb_i)}{(z-\lambda_i)^2(z-\lambda_j)}.
\end{align*}
Since $\Deltab$ is symmetric, $(\vb_i^\ts\Deltab\vb_j)(\vb_j^\ts\Deltab\vb_i) = (\vb_i^\ts\Deltab\vb_j)^2$, so
\begin{equation}\label{eq:I2-master}
    I_2 = \sum_{i,j=1}^{d}\Ex{(\vb_i^\ts\Deltab\vb_j)^2}\frac{1}{2\pi \mathrm{i}}\oint_\Gamma\frac{z}{(z-\lambda_i)^2(z-\lambda_j)}\,dz.
\end{equation}

We now compute the contour integral case by case.

\paragraph{Case 1: $i,j\le k.$}
Clearly both poles lie inside $\Gamma$.

If $i\neq j$, the residues are
\begin{equation}
    \operatorname*{Res}_{z=\lambda_i}\frac{z}{(z-\lambda_i)^2(z-\lambda_j)}
    =
    \left.\frac{d}{dz}\frac{z}{z-\lambda_j}\right|_{z=\lambda_i}
    =
    -\frac{\lambda_j}{(\lambda_i-\lambda_j)^2},
\end{equation}
and
\begin{equation}
    \operatorname*{Res}_{z=\lambda_j}\frac{z}{(z-\lambda_i)^2(z-\lambda_j)}
    =
    \left.\frac{z}{(z-\lambda_i)^2}\right|_{z=\lambda_j}
    =
    \frac{\lambda_j}{(\lambda_i-\lambda_j)^2},
\end{equation}
so the integral is zero. 

If $i=j\le k$, then there is a triple pole at
$z=\lambda_i$, and
\begin{equation}
    \operatorname*{Res}_{z=\lambda_i}\frac{z}{(z-\lambda_i)^3}
    =
    \frac{1}{2!}\left.\frac{d^2}{dz^2} z\right|_{z=\lambda_i}
    =
    0,
\end{equation}
so the integral is again zero.

\paragraph{Case 2: $i,j>k.$}

Both poles lie outside $\Gamma$, so the integral is zero.

\paragraph{Case 3: $i\le k<j.$}

Only the double pole at $z=\lambda_i$ lies inside
$\Gamma$, and
\begin{equation}
    \frac{1}{2\pi \mathrm{i}}\oint_\Gamma \frac{z}{(z-\lambda_i)^2(z-\lambda_j)}\,dz
    =
    \operatorname*{Res}_{z=\lambda_i}\frac{z}{(z-\lambda_i)^2(z-\lambda_j)}
    =
    \left.\frac{d}{dz}\frac{z}{z-\lambda_j}\right|_{z=\lambda_i}
    =
    -\frac{\lambda_j}{(\lambda_i-\lambda_j)^2}.
\end{equation}

\paragraph{Case 4: $j\le k<i.$}
Only the simple pole at $z=\lambda_j$ lies inside
$\Gamma$, and
\begin{equation}
    \frac{1}{2\pi \mathrm{i}}\oint_\Gamma \frac{z}{(z-\lambda_i)^2(z-\lambda_j)}\,dz
    =
    \operatorname*{Res}_{z=\lambda_j}\frac{z}{(z-\lambda_i)^2(z-\lambda_j)}
    =
    \left.\frac{z}{(z-\lambda_i)^2}\right|_{z=\lambda_j}
    =
    \frac{\lambda_j}{(\lambda_i-\lambda_j)^2}.
\end{equation}

Therefore
\begin{align}
    I_2 &=
    \sum_{i=1}^k \sum_{j=k+1}^d
    \Ex{(\vb_i^\ts \Deltab \vb_j)^2}
    \left(-\frac{\lambda_j}{(\lambda_i-\lambda_j)^2}\right) \\
    &\quad+
    \sum_{j=1}^k \sum_{i=k+1}^d
    \Ex{(\vb_i^\ts \Deltab \vb_j)^2}
    \frac{\lambda_j}{(\lambda_i-\lambda_j)^2}.
\end{align}
Relabeling indices in the second sum and using
$(\vb_i^\ts \Deltab \vb_j)^2=(\vb_j^\ts \Deltab \vb_i)^2$, we obtain
\begin{equation}\label{eq:I2-clean}
    I_2 = \sum_{i=1}^k \sum_{j=k+1}^d
    \frac{\Ex{(\vb_i^\ts \Deltab \vb_j)^2}}{\lambda_i-\lambda_j}.
\end{equation}

\paragraph{Step 2: Bounding $\Ex{(\vb_i^\ts\Deltab\vb_j)^2}$.}
For $i\ne j$, $\Deltab = \Ab^\ts \Eb + \Eb^\ts \Ab + \Eb^\ts \Eb$ and $\Ab\vb_\ell = \sigma_\ell\ub_\ell$ give
\begin{equation*}
    \vb_i^\ts\Deltab\vb_j = \sigma_i\,\ub_i^\ts \Eb \vb_j + \sigma_j\,\ub_j^\ts \Eb \vb_i + \vb_i^\ts \Eb^\ts \Eb \vb_j.
\end{equation*}
Applying the scalar inequality $(a+b+c)^2 \le 3(a^2+b^2+c^2)$ yields:\footnote{Indeed, by Cauchy--Schwarz,
$(a+b+c)^2=\bigl((a,b,c)\cdot(1,1,1)\bigr)^2\le (a^2+b^2+c^2)(1^2+1^2+1^2)=3(a^2+b^2+c^2).$}
\begin{equation}\label{eq:vidv-split}
    \Ex{(\vb_i^\ts\Deltab\vb_j)^2} \le 3\sigma_i^2\,\Ex{(\ub_i^\ts \Eb \vb_j)^2} + 3\sigma_j^2\,\Ex{(\ub_j^\ts \Eb \vb_i)^2} + 3\,\Ex{(\vb_i^\ts \Eb^\ts \Eb \vb_j)^2}.
\end{equation}

\paragraph{Linear terms.}
For unit vectors $\ub\in\R^n$, $\vb\in\R^d$, notice that:
\begin{equation}
    \ub^\ts \Eb \vb = \sum_{a=1}^n \sum_{b=1}^d \ub_a\vb_b\Eb_{ab},
\end{equation}
which is a sum of independent centered sub-gaussian random variables.

By~\eqref{prop:quant-subg}, 
\begin{equation}
    \norm{\ub_a\vb_b\Eb_{ab}}_{\psi_2}\le C\rho\abr{\ub_a\vb_b}.
\end{equation}
The general Hoeffding inequality~\citep[Theorem~2.7.3]{Vershynin_2026} gives, for some absolute constant $c$,
\begin{equation*}
    \pr{\abr{\ub^\ts \Eb \vb}\ge t} \le 2\exp\rbr{-\frac{ct^2}{\rho^2}},
\end{equation*}
since $\sum_{a,b}\ub_a^2\vb_b^2 = 1$. Therefore, for some absolute constant $C,$
\begin{equation*}
    \Ex{(\ub^\ts \Eb \vb)^2} = \int_0^\infty 2t\,\pr{\abr{\ub^\ts \Eb \vb}\ge t}\,dt \le 4\int_0^\infty t e^{-ct^2/\rho^2}\,dt = \frac{2\rho^2}{c} \le C\rho^2.
\end{equation*}

\paragraph{Quadratic term.}
Write $\xb=\vect(\Eb)\in\R^{nd}$. Then
\begin{equation*}
    \vb_i^\ts \Eb^\ts \Eb \vb_j = \xb^\ts \Bb^{ij}\xb, \qquad \Bb^{ij} := \tfrac{1}{2}\bigl((\vb_i\vb_j^\ts)\otimes \Ib_n + (\vb_j\vb_i^\ts)\otimes \Ib_n\bigr).
\end{equation*}
Indeed, by the cyclic property of trace:

\begin{equation}
    \vb_i^\ts \Eb^\ts \Eb \vb_j = \tr(\Eb^\ts \Eb \vb_j\vb_i^\ts).
\end{equation}

Recall ~\citep[Eqs.~(520)--(521)]{petersen2008matrix},
\begin{align}
    \vect(\Ab\Xb\Bb) &= (\Bb^\ts\otimes \Ab)\,\vect(\Xb), \label{eq:vec-axb}\\
    \tr(\Ab^\ts \Bb) &= \vect(\Ab)^\ts \vect(\Bb), \label{eq:trace-vec}
\end{align}

First, use~\eqref{eq:trace-vec} with $\Ab = \Eb, \Bb = \Eb \vb_j \vb_i^\ts$:
\begin{equation}
    \tr(\Eb^\ts \Eb \vb_j\vb_i^\ts) = \vect(\Eb)^\ts \vect( \Eb \vb_j \vb_i^\ts).
\end{equation}

Second, use~\eqref{eq:vec-axb} with $\Ab = \Ib_n, \Xb = \Eb, \Bb = \vb_j \vb_i^\ts$:
\begin{equation}
    \vect( \Eb \vb_j \vb_i^\ts) = \bigl((\vb_i\vb_j^\ts)\otimes\Ib_n\bigr)\vect(\Eb).
\end{equation}

Substituting the above two results, we get:
\begin{equation}
     \vb_i^\ts \Eb^\ts \Eb \vb_j =\vect(\Eb)^\ts \bigl((\vb_i\vb_j^\ts)\otimes\Ib_n\bigr)\vect(\Eb).
\end{equation}

Finally, since a quadratic form depends only on the symmetric part of its matrix:
\begin{align}
    \vb_i^\ts \Eb^\ts \Eb \vb_j &= \frac{1}{2}\vect(\Eb)^\ts \rbr{(\vb_i\vb_j^\ts)\otimes\Ib_n + \rbr{(\vb_i\vb_j^\ts)\otimes\Ib_n}^\ts}\vect(\Eb)\\
    &= \frac{1}{2}\vect(\Eb)^\ts \rbr{(\vb_i\vb_j^\ts)\otimes\Ib_n + (\vb_j\vb_i^\ts)\otimes\Ib_n}\vect(\Eb)\\
    &= \vect(\Eb)^\ts \Bb^{ij} \vect(\Eb).
\end{align}

Next, notice that since $\vb_i$ and $\vb_j$ are unit vectors, both $(\vb_i\vb_j^\ts)\otimes\Ib_n$ and $(\vb_j\vb_i^\ts)\otimes\Ib_n$ have $n$ $1$-valued singular values and $n(d-1)$ $0$-valued singular values. Therefore, 
\begin{align}
    &\twonorm{(\vb_i\vb_j^\ts)\otimes\Ib_n} = \twonorm{(\vb_j\vb_i^\ts)\otimes\Ib_n} =1 \\
    &\norm{(\vb_i\vb_j^\ts)\otimes\Ib_n}_F = \norm{(\vb_j\vb_i^\ts)\otimes\Ib_n}_F = \sqrt{n}.
\end{align}

Triangle inequality yields:
\begin{equation}
    \twonorm{\Bb^{ij}} \le 1, \quad\norm{\Bb^{ij}}_F \le \sqrt{n}.
\end{equation}

The Hanson--Wright inequality~\citep[Theorem~6.2.2]{Vershynin_2026} therefore gives, for any $t\ge 0$, there exists a positive constant $c$ such that:
\begin{equation*}
Y := \vb_i^\ts \Eb^\ts \Eb \vb_j - \Ex{\vb_i^\ts \Eb^\ts \Eb \vb_j}, \quad    \pr{\abr{Y}\ge t} \le 2\exp\sbr{-c\min\rbr{\frac{t^2}{\rho^4 n},\,\frac{t}{\rho^2}}}.
\end{equation*}

We can then calculate:
\begin{equation}
    \mathbb{E}[Y^2] = \int_0^\infty 2t\,\mathbb{P}(|Y|\ge t)\,dt.
\end{equation}

The two arguments of the minimum are equal at $t=\rho^2 n$, so
\begin{equation*}
    \Ex{Y^2} \le 4\int_0^{\rho^2 n} t e^{-ct^2/(\rho^4 n)}\,dt + 4\int_{\rho^2 n}^{\infty} t e^{-ct/\rho^2}\,dt.
\end{equation*}

For the first term, with the substitution
\begin{equation}
    u=\frac{ct^2}{\rho^4 n},
    \qquad
    du=\frac{2ct}{\rho^4 n}\,dt,
\end{equation}
we get
\begin{equation}
    \int_0^{\rho^2 n} t e^{-ct^2/(\rho^4 n)}dt
    =
    \frac{\rho^4 n}{2c}\int_0^{cn} e^{-u}\,du
    \le
    \frac{\rho^4 n}{2c}.
\end{equation}
For the second term, with
\begin{equation}
    s=\frac{ct}{\rho^2},
    \qquad
    ds = \frac{c}{\rho^2}\,dt,
\end{equation}
we obtain
\begin{equation}
    \int_{\rho^2 n}^{\infty} t e^{-ct/\rho^2}dt
    =
    \frac{\rho^4}{c^2}\int_{cn}^{\infty} s e^{-s}\,ds
    =
    \frac{\rho^4}{c^2}(cn+1)e^{-cn}.
\end{equation}
Since $cn\ge 0$ and $x+1\le e^x$ for all $x\ge 0$, we have
\begin{equation}
    (cn+1)e^{-cn}\le 1,
\end{equation}
and therefore
\begin{equation}
    \frac{\rho^4}{c^2}(cn+1)e^{-cn}
    \le
    C\rho^4.
\end{equation}

Adding the two integrals, we get:
\begin{equation}
   \Var[\vb_i^\ts \Eb^\ts \Eb \vb_j] = \Ex{\rbr{\vb_i^\ts \Eb^\ts \Eb \vb_j - \Ex{\vb_i^\ts \Eb^\ts \Eb \vb_j}}^2} \le C \rho^4 n.
\end{equation}

Since $\Ex{\Eb^\ts \Eb} = \Db$, $\Ex{\vb_i^\ts \Eb^\ts \Eb \vb_j} = \vb_i^\ts \Db \vb_j$.

Finally, the decomposition of the variance gives:

\begin{align}
    \Ex{(\vb_i^\ts \Eb^\ts \Eb \vb_j)^2} &= \Ex{\vb_i^\ts \Eb^\ts \Eb \vb_j}^2 + \Var [\vb_i^\ts \Eb^\ts \Eb \vb_j]\\
    &\le (\vb_i^\ts \Db \vb_j)^2 + C\rho^4 n.
\end{align}

Combining the linear and quadratic parts in~\eqref{eq:vidv-split},
\begin{equation}\label{eq:vidv-final}
    \Ex{(\vb_i^\ts\Deltab\vb_j)^2} \le C\sbr{\rho^2(\sigma_i^2+\sigma_j^2) + (\vb_i^\ts \Db \vb_j)^2 + \rho^4 n}.
\end{equation}

\paragraph{Step 3: Plugging into $I_2$.}
Substituting~\eqref{eq:vidv-final} into~\eqref{eq:I2-clean} and using $\lambda_i=\sigma_i^2$,
\begin{equation*}
    I_2 \le C\sum_{i=1}^{k}\sum_{j=k+1}^{d}\frac{\rho^2(\lambda_i+\lambda_j) + (\vb_i^\ts \Db \vb_j)^2 + \rho^4 n}{\lambda_i-\lambda_j}.
\end{equation*}
For the middle term, with $\Pb := \Vb_k\Vb_k^\ts$ and $\Vb_{k,\perp}=[\vb_{k+1},\dots,\vb_d]$, $\Vb_{k,\perp}\Vb_{k,\perp}^\ts = \Ib - \Pb$, we can express the double sum in matrix form:

\begin{align}
    &\sum_{i=1}^{k}\sum_{j=k+1}^{d}(\vb_i^\ts \Db \vb_j)^2 =  \|\Vb_k^\ts \Db \Vb_{k,\perp}\|_F^2 \\
    &= \tr \rbr{ \Vb_k^\ts \Db \Vb_{k,\perp} \Vb_{k,\perp}^\ts \Db \Vb_k } = \tr \rbr{\Vb_k^\ts \Db (\Ib -\Pb) \Db \Vb_k}\\
    &= \tr \rbr{\Pb \Db (\Ib -\Pb) \Db }\\
    &= \tr(\Pb\Db^2) - \tr(\Pb\Db\Pb\Db) \le \tr(\Pb\Db^2),
\end{align}

since $\tr(\Pb\Db\Pb\Db) = \tr(\Db^{1/2} \Pb \Db^{1/2}  \Db^{1/2} \Pb  \Db^{1/2}) = \|\Db^{1/2}\Pb\Db^{1/2}\|_F^2 \ge 0$. 

Since $\Db^2 = \diag(\nu_1^2,\dots,\nu_d^2)$, Ky Fan's maximum principle yields $\tr(\Pb\Db^2) \le \sum_{j=1}^{k}(\nu_j^\downarrow)^2$. 

Combining with the spectral-gap bound $\lambda_i-\lambda_j \ge g$ for $i\le k<j$,
\begin{equation*}
    I_2 \le \frac{C}{g}\sbr{\rho^2\sum_{i=1}^{k}\sum_{j=k+1}^{d}(\lambda_i+\lambda_j) + \sum_{j=1}^{k}(\nu_j^\downarrow)^2 + \rho^4 n\,k(d-k)}.
\end{equation*}
\end{proof}

\subsection{Proof of Lemma~\ref{lem:E-opnorm}}\label{app:E-opnorm}

We will use Hoeffding's inequality for sums of bounded independent random variables.

\begin{theorem}[Hoeffding's inequality, {\citealp[Theorem~2.2.6]{Vershynin_2026}}]\label{thm:hoeffding}
Let $X_1,\dots,X_N$ be independent random variables with $m_i \le X_i \le M_i$ for each $i$. Then for every $t > 0$,
\begin{equation*}
    \pr{\abr{\sum_{i=1}^{N}\rbr{X_i - \Ex{X_i}}} \ge t} \le 2\exp\rbr{-\frac{2 t^2}{\sum_{i=1}^{N}(M_i - m_i)^2}}.
\end{equation*}
\end{theorem}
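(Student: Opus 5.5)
The proof is the standard Chernoff argument, with the moment generating function controlled by Hoeffding's lemma (Lemma~\ref{lem:hoef}). Set $Y_i := X_i - \Ex{X_i}$ and $S := \sum_{i=1}^N Y_i$. Each $Y_i$ is centered and satisfies $m_i - \Ex{X_i} \le Y_i \le M_i - \Ex{X_i}$ almost surely. The interval containing $Y_i$ therefore has the same length $M_i - m_i$ as the one containing $X_i$. Lemma~\ref{lem:hoef} then gives, for every $\lambda \in \R$,
\begin{equation*}
    \Ex{\exp(\lambda Y_i)} \le \exp\rbr{\frac{\lambda^2 (M_i - m_i)^2}{8}}.
\end{equation*}
Write $\sigma^2 := \sum_{i=1}^N (M_i - m_i)^2$. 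By independence the moment generating function of $S$ factorizes, so $\Ex{\exp(\lambda S)} \le \exp(\lambda^2 \sigma^2/8)$.

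For the upper tail, fix $\lambda > 0$ and apply Markov's inequality to $\exp(\lambda S)$, exactly as in the proof of Lemma~\ref{lem:PAE}. This gives
\begin{equation*}
    \pr{S \ge t} \le \exp\rbr{-\lambda t + \frac{\lambda^2 \sigma^2}{8}}.
\end{equation*}
The exponent is a quadratic in $\lambda$ and is minimized at $\lambda = 4t/\sigma^2$. Substituting this value yields $\pr{S \ge t} \le \exp(-2t^2/\sigma^2)$.

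For the lower tail, apply the same argument to the variables $-Y_i$. They are centered, independent, and lie in intervals of the same lengths $M_i - m_i$, so $\pr{S \le -t} \le \exp(-2t^2/\sigma^2)$. A union bound over the two tails gives the claimed factor $2$.

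There is one degenerate case. If $\sigma^2 = 0$, every $X_i$ is almost surely constant, so $S = 0$ almost surely and the bound holds trivially for $t > 0$ (the right-hand side is read as $0$). There is no real obstacle here: the only nontrivial ingredient is the moment generating function bound, which is exactly Lemma~\ref{lem:hoef} and is assumed. The remaining care is to check that centering preserves the interval length $M_i - m_i$ and to optimize $\lambda$ correctly.
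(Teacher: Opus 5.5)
Your proof is correct. Centering preserves the interval length, Lemma~\ref{lem:hoef} with independence gives $\Ex{\exp(\lambda S)} \le \exp(\lambda^2\sigma^2/8)$, optimizing the Chernoff bound at $\lambda = 4t/\sigma^2$ yields $\exp(-2t^2/\sigma^2)$, and the symmetric lower tail plus a union bound supplies the factor $2$.

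The paper does not prove this statement; it simply cites \citet[Theorem~2.2.6]{Vershynin_2026}. Your argument is the standard Chernoff-plus-Hoeffding's-lemma proof behind that citation, and it is the same MGF computation the paper itself carries out in the proof of Lemma~\ref{lem:PAE}.
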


\begin{proof}[Proof of Lemma~\ref{lem:E-opnorm}]
The proof is based on an $\varepsilon$-net argument; see Section~4.2 of~\citet{Vershynin_2026} for a review.

\paragraph{Step 1: Approximation.}
Fix $\varepsilon = 1/4$. By \citep[Corollary~4.2.11]{Vershynin_2026}, there exist $\varepsilon$-nets $\Ncal \subset \mathbb{S}^{n-1}$ and $\Dcal \subset \mathbb{S}^{d-1}$ with bounded cardinality:
\begin{equation*}
    \abr{\Ncal} \le 9^n, \qquad \abr{\Dcal} \le 9^d.
\end{equation*}
By Lemma~\ref{lem:two-net}, $\twonorm{\Eb}$ can be bounded using the nets as
\begin{equation}\label{eq:E-net-relaxation}
    \twonorm{\Eb} \le 2 \max_{\xb\in\mathcal N,\ \yb\in\mathcal D}
    |\xb^\top \Eb \yb|.
\end{equation}

\paragraph{Step 2: Concentration.}
Fix $\xb \in \Ncal$ and $\yb \in \Dcal$. Then
\[
    \xb^\top \Eb \yb
    =
    \sum_{i=1}^{n}\sum_{j=1}^{d}
    \xb_i\yb_j\Eb_{ij},
\]
which is a sum of independent bounded random variables. Using the uniformly dithered quantizer of Section~\ref{app:quantizer}, assuming $\Ab_{ij}\in [q_{ij}, q_{ij}+\rho]$,
\begin{equation*}
    \Eb_{ij} = \begin{cases}
        \rho(1 - p_{ij}) & \text{with probability } p_{ij},\\
        -\rho\,p_{ij} & \text{with probability } 1 - p_{ij},
    \end{cases}
\end{equation*}
where $p_{ij}:=(\Ab_{ij}-q_{ij})/\rho$. Therefore $\Eb_{ij}$ lies in the interval $[-\rho p_{ij},\,\rho(1-p_{ij})]$ of width $\rho$. Applying Theorem~\ref{thm:hoeffding},
\begin{align*}
    \pr{\abr{\xb^\top \Eb \yb} \ge t}
    &\le 2\exp\rbr{-\frac{2 t^2}{\sum_{i,j}\rho^2 \xb_i^2 \yb_j^2}}\\
    &= 2\exp\rbr{-\frac{2 t^2}{\rho^2 \rbr{\sum_{i=1}^{n}\xb_i^2}\rbr{\sum_{j=1}^{d}\yb_j^2}}}
    = 2\exp\rbr{-\frac{2 t^2}{\rho^2}}.
\end{align*}

\paragraph{Step 3: Union bound.}
By a union bound on the event $\cbr{\max_{\xb\in\Ncal,\,\yb\in\Dcal}\abr{\xb^\top \Eb \yb} \ge t} = \bigcup_{\xb\in\Ncal,\,\yb\in\Dcal}\cbr{\abr{\xb^\top \Eb \yb} \ge t}$,
\begin{align*}
    \pr{\max_{\xb\in\Ncal,\,\yb\in\Dcal}\abr{\xb^\top \Eb \yb} \ge t}
    &\le \sum_{\xb\in\Ncal,\,\yb\in\Dcal}\pr{\abr{\xb^\top \Eb \yb} \ge t}\\
    &\le 9^{n+d} \cdot 2\exp\rbr{-\frac{2 t^2}{\rho^2}}.
\end{align*}
Now choose
\[
    t=C_0\rho(\sqrt{n+d}+s),
\]
where $C_0>0$ is a sufficiently large constant. Since
\[
    (\sqrt{n+d}+s)^2\ge n+d+s^2,
\]
we obtain
\[
    2\cdot 9^{n+d}
    \exp\left(-\frac{2t^2}{\rho^2}\right)
    \le
    2e^{-cs^2}.
\]
for a positive constant $c$. Hence
\[
    \mathbb P\left\{
    \|\Eb\|_2>C_0\rho(\sqrt{n+d}+s)
    \right\}
    \le
    2e^{-cs^2}.
\]
Finally, taking $s=C'\sqrt{n+d}$ with $C'$ sufficiently large gives
\[
    \mathbb P\left\{
    \|\Eb\|_2\le C'\rho\sqrt{n+d}
    \right\}
    \ge
    1-2e^{-(n+d)}.
\]
As $n\ge d$, we have for some positive constant $C$
\[
    \mathbb P\left\{
    \|\Eb\|_2\le C\rho\sqrt n
    \right\}
    \ge
    1-2e^{-(n+d)}.
\]
\end{proof}

\subsection{Proof of Lemma~\ref{lem:Delta-opnorm}}\label{app:Delta-opnorm}

\begin{proof}
\paragraph{Step 1: Triangle inequality.}
Recall $\Deltab = \Ab^\ts \Eb + \Eb^\ts \Ab + \Eb^\ts \Eb$. Since $\twonorm{\Mb^\ts} = \twonorm{\Mb}$ for any $\Mb$, the triangle inequality gives
\begin{equation}\label{eq:Delta-triangle}
    \twonorm{\Deltab} \le \twonorm{\Ab^\ts \Eb} + \twonorm{\Eb^\ts \Ab} + \twonorm{\Eb^\ts \Eb} = 2\,\twonorm{\Ab^\ts \Eb} + \twonorm{\Eb}^2.
\end{equation}

\paragraph{Step 2: Bound on $\twonorm{\Ab^\ts \Eb}$ via Lemma~\ref{lem:PAE}.}
Using the thin SVD $\Ab = \Ub \Sigmab \Vb^\ts,$
\begin{equation*}
    \twonorm{\Ab^\ts \Eb} = \twonorm{\Vb\Sigmab\Ub^\ts \Eb} \le \twonorm{\Vb}\,\twonorm{\Sigmab}\,\twonorm{\Ub^\ts \Eb} = \sigma_1(\Ab)\,\twonorm{\Ub^\ts \Eb},
\end{equation*}
Since $\Pb_\Ab = \Ub\Ub^\ts$,
\begin{equation*}
    \twonorm{\Ub^\ts \Eb} = \twonorm{\Ub\Ub^\ts \Eb} = \twonorm{\Pb_\Ab \Eb}.
\end{equation*}
Since $\Ab$ has full column rank, $r := \rank(\Ab) = d$. Applying Lemma~\ref{lem:PAE} with $u = \sqrt{d}$ and $r = d$ gives
\begin{equation*}
    \pr{\twonorm{\Pb_\Ab \Eb} > C_0\,\rho\rbr{\sqrt{2d} + \sqrt{d}}} \le 2 e^{-c_0 d},
\end{equation*}
so on an event $\Ecal_1$ with $\pr{\Ecal_1} \ge 1 - 2 e^{-c_0 d}$,
\begin{equation}\label{eq:AE-bound}
    \twonorm{\Ab^\ts \Eb} \le C_1\,\sigma_1(\Ab)\,\rho\sqrt{d}, \qquad C_1 := C_0(1 + \sqrt{2}).
\end{equation}

\paragraph{Step 3: Bound on $\twonorm{\Eb}^2$ via Lemma~\ref{lem:E-opnorm}.}
By Lemma~\ref{lem:E-opnorm}, on an event $\Ecal_2$ with $\pr{\Ecal_2} \ge 1 - 2 e^{-(n+d)}$,
\begin{equation}\label{eq:E-square-bound}
    \twonorm{\Eb}^2 \le C_2\,\rho^2 n,
\end{equation}
where $C_2$ is the square of the constant from Lemma~\ref{lem:E-opnorm}.

\paragraph{Step 4: Union bound.}
On $\Ecal_1 \cap \Ecal_2$, combining~\eqref{eq:Delta-triangle},~\eqref{eq:AE-bound}, and~\eqref{eq:E-square-bound},
\begin{equation*}
    \twonorm{\Deltab} \le 2 C_1\,\sigma_1(\Ab)\,\rho\sqrt{d} + C_2\,\rho^2 n \le C\rbr{\sigma_1(\Ab)\,\rho\sqrt{d} + \rho^2 n},
\end{equation*}
with $C := \max\{2 C_1, C_2\}$. By a union bound,
\begin{equation*}
    \pr{\Ecal_1 \cap \Ecal_2} \ge 1 - 2 e^{-c_0 d} - 2 e^{-(n+d)} \ge 1 - 4 e^{-c_0 d},
\end{equation*}
where the last step uses $n+d \ge d$, so $e^{-(n+d)} \le e^{-d}$, and absorbs constants. Setting $c_1 := 4$ and $c_2 := \min\{c_0, 1\}$ gives
\begin{equation*}
    \pr{\twonorm{\Deltab} \le C\rbr{\sigma_1(\Ab)\,\rho\sqrt{d} + \rho^2 n}} \ge 1 - c_1 e^{-c_2 d}.
\end{equation*}
\end{proof}

\subsection{Proof of Lemma~\ref{lem:remainder}}\label{app:remainder}

\begin{proof}
For each $z\in\Gamma$, set
\begin{equation*}
    X(z) := \twonorm{\Rb_\Gb(z)\Deltab},
\end{equation*}
and let $\Gcal$ denote the high-probability event of Lemma~\ref{lem:Delta-opnorm}, on which $\twonorm{\Deltab}\le C\rbr{\sigma_1\rho\sqrt d+\rho^2 n}$, with $\pr{\Gcal^c}\le c_1 e^{-c_2 d}$.

\paragraph{Step 1: Pointwise contour bound on $\abr{I_\ell}$.}
Fix $\ell\ge 3$. Recall from~\eqref{eq:topk-series} that
\begin{equation*}
    I_\ell = \frac{1}{2\pi \mathrm{i}}\oint_\Gamma z\,\Ex{\tr\rbr{(\Rb_\Gb(z)\Deltab)^\ell\Rb_\Gb(z)}}\,dz,
\end{equation*}
so by the triangle inequality on the contour integral and Jensen's inequality on the expectation,
\begin{equation}\label{eq:Iell-step1}
    \abr{I_\ell} \le \frac{1}{2\pi}\oint_\Gamma \abr{z}\,\Ex{\abr{\tr\rbr{(\Rb_\Gb(z)\Deltab)^\ell\Rb_\Gb(z)}}}\,\abr{dz}.
\end{equation}
For any $d\times d$ matrix $\Mb$ one has $\abr{\tr(\Mb)}\le d\twonorm{\Mb}$, and submultiplicativity of the operator norm gives
\begin{equation*}
    \twonorm{(\Rb_\Gb(z)\Deltab)^\ell\Rb_\Gb(z)} \le \twonorm{\Rb_\Gb(z)\Deltab}^\ell\,\twonorm{\Rb_\Gb(z)} = X(z)^\ell\,\twonorm{\Rb_\Gb(z)}.
\end{equation*}
Substituting into~\eqref{eq:Iell-step1},
\begin{equation}\label{eq:Iell-step1-final}
    \abr{I_\ell} \le \frac{d}{2\pi}\oint_\Gamma \abr{z}\,\Ex{X(z)^\ell\,\twonorm{\Rb_\Gb(z)}}\,\abr{dz} \le \frac{d}{\pi g}\oint_\Gamma \abr{z}\,\Ex{X(z)^\ell}\,\abr{dz},
\end{equation}
where for the last inequality, we used the resolvent bound~\eqref{eq:resolvent-norm-Gamma} $\twonorm{\Rb_\Gb(z) }\le \frac{2}{g}$.

\paragraph{Step 2: Bounding $X(z)$ on $\Gcal$ and on $\Gcal^c$.}
For every $z\in\Gamma$, submultiplicativity and~\eqref{eq:resolvent-norm-Gamma} gives $X(z) \le \frac{2}{g}\twonorm{\Deltab}$.

On $\Gcal$, the bound from Lemma~\ref{lem:Delta-opnorm} therefore yields
\begin{equation*}
    X(z)\,\mathbf{1}_{\Gcal} \le \frac{2}{g}\,C\rbr{\sigma_1\rho\sqrt d+\rho^2 n} = \alpha(\rho),
\end{equation*}
with $\alpha(\rho)$ as in~\eqref{eq:alpha-beta}. 

On $\Gcal^c$ the lemma no longer applies, but the entries of $\Eb$ are bounded by $\rho$, so the deterministic Frobenius bound
\begin{equation*}
    \twonorm{\Eb}\le \|\Eb\|_F \le \rho\sqrt{nd}
\end{equation*}
still holds, and the triangle inequality on $\Deltab=\Ab^\ts\Eb+\Eb^\ts\Ab+\Eb^\ts\Eb$ gives
\begin{equation*}
    \twonorm{\Deltab} \le 2\twonorm{\Ab}\twonorm{\Eb}+\twonorm{\Eb}^2 \le 2\sigma_1\rho\sqrt{nd}+\rho^2 nd.
\end{equation*}
Multiplying by $2/g$,
\begin{equation*}
    X(z)\,\mathbf{1}_{\Gcal^c} \le \frac{2}{g}\rbr{2\sigma_1\rho\sqrt{nd}+\rho^2 nd} = \beta(\rho).
\end{equation*}

\paragraph{Step 3: Taking expectation and splitting on $\Gcal$.}
Splitting the indicator $1=\mathbf{1}_{\Gcal}+\mathbf{1}_{\Gcal^c}$ inside the expectation and using the bounds of Step~2,
\begin{align*}
    \Ex{X(z)^\ell} 
    &= \Ex{X(z)^\ell\,\mathbf{1}_{\Gcal}} + \Ex{X(z)^\ell\,\mathbf{1}_{\Gcal^c}} \\
    &\le \alpha(\rho)^\ell\,\pr{\Gcal} + \beta(\rho)^\ell\,\pr{\Gcal^c} \\
    &\le \alpha(\rho)^\ell + c_1 e^{-c_2 d}\,\beta(\rho)^\ell,
\end{align*}
where the last step uses $\pr{\Gcal}\le 1$ and $\pr{\Gcal^c}\le c_1 e^{-c_2 d}$ from Lemma~\ref{lem:Delta-opnorm}. Substituting into~\eqref{eq:Iell-step1-final},
\begin{equation*}
    \abr{I_\ell} \le \frac{d}{\pi g}\rbr{\alpha(\rho)^\ell+c_1 e^{-c_2 d}\,\beta(\rho)^\ell}\oint_\Gamma\abr{z}\,\abr{dz}.
\end{equation*}

\paragraph{Step 4: Geometric parameters of the contour $\Gamma$.}
The contour $\Gamma$ consists of two horizontal segments of length $\lambda_1-\lambda_k$ joined by two half-circles of radius $g/2$ (Figure~\ref{fig:contour}), so
\begin{equation*}
    \operatorname{len}(\Gamma) = 2(\lambda_1-\lambda_k)+\pi g.
\end{equation*}
The point of $\Gamma$ farthest from the origin is the rightmost point of the right half-circle, namely $\lambda_1+g/2$, so
\begin{equation*}
    \sup_{z\in\Gamma}\abr{z} = \lambda_1+\frac{g}{2}.
\end{equation*}
Bounding $\oint_\Gamma\abr{z}\abr{dz}\le\operatorname{len}(\Gamma)\cdot\sup_{z\in\Gamma}\abr{z}$ and substituting,
\begin{equation}\label{eq:Iell-step4}
    \abr{I_\ell} \le \frac{d}{\pi g}\rbr{2(\lambda_1-\lambda_k)+\pi g}\rbr{\lambda_1+\frac{g}{2}}\rbr{\alpha(\rho)^\ell+c_1 e^{-c_2 d}\,\beta(\rho)^\ell}.
\end{equation}

\paragraph{Step 5: Geometric summation in $\ell$.}
Under the hypothesis $\alpha(\rho),\beta(\rho)<1$, both geometric series converge and
\begin{equation*}
    \sum_{\ell=3}^{\infty}\alpha(\rho)^\ell = \frac{\alpha(\rho)^3}{1-\alpha(\rho)}, \qquad \sum_{\ell=3}^{\infty}\beta(\rho)^\ell = \frac{\beta(\rho)^3}{1-\beta(\rho)}.
\end{equation*}
Summing~\eqref{eq:Iell-step4} over $\ell\ge 3$, using $\abr{\sum_\ell I_\ell}\le\sum_\ell\abr{I_\ell}$, and absorbing $1/\pi$ into an absolute constant $C$,
\begin{equation*}
    \abr{\sum_{\ell=3}^{\infty}I_\ell} \le \frac{Cd}{g}\rbr{2(\lambda_1-\lambda_k)+\pi g}\rbr{\lambda_1+\frac{g}{2}}\rbr{\frac{\alpha(\rho)^3}{1-\alpha(\rho)}+c_1 e^{-c_2 d}\,\frac{\beta(\rho)^3}{1-\beta(\rho)}}.
\end{equation*}
\end{proof}

\subsection{Proof of Theorem~\ref{thm:smallcluster-main}}\label{app:smallcluster-main}

\begin{proof}
Let $\Gcal$ be the event of Lemma~\ref{lem:Delta-opnorm}. Under the hypotheses $\alpha(\rho)<1$ and $\beta(\rho)<1$ with $\alpha,\beta$ as in~\eqref{eq:alpha-beta}, on $\Gcal$
\begin{equation*}
    \twonorm{\Deltab}\le C\rbr{\sigma_1\rho\sqrt d+\rho^2 n} = \tfrac{g}{2}\,\alpha(\rho) < \tfrac{g}{2},
\end{equation*}
while on $\Gcal^c$ the deterministic Frobenius bound $\twonorm{\Eb}\le\|\Eb\|_F\le\rho\sqrt{nd}$ together with $\twonorm{\Deltab}\le 2\twonorm{\Ab}\twonorm{\Eb}+\twonorm{\Eb}^2$ gives
\begin{equation*}
    \twonorm{\Deltab}\le 2\sigma_1\rho\sqrt{nd}+\rho^2 nd = \tfrac{g}{2}\,\beta(\rho) < \tfrac{g}{2}.
\end{equation*}
Hence $\twonorm{\Deltab}<g/2$ almost surely, so Lemma~\ref{lem:topk-series} applies and yields the convergent series~\eqref{eq:topk-series}. Combining~\eqref{eq:topk-series} with~\eqref{eq:tail-top-identity},
\begin{equation*}
    \Tcal = \Ex{\|\Eb\|_F^2} - \sum_{\ell=1}^{\infty} I_\ell = \sum_{j=1}^{d}\nu_j - I_1 - I_2 - \sum_{\ell\ge 3} I_\ell,
\end{equation*}
where $\Ex{\|\Eb\|_F^2} = \tr(\Db) = \sum_{j=1}^{d}\nu_j$. Bounding $I_1$ from above by Lemma~\ref{lem:I1}, $I_2$ from above by Lemma~\ref{lem:I2}, and $\sum_{\ell\ge 3} I_\ell$ in absolute value by Lemma~\ref{lem:remainder}, and using $\sum_{j=1}^{d}\nu_j - \sum_{j=1}^{k}\nu_j^\downarrow = \sum_{j=k+1}^{d}\nu_j^\downarrow$,
\begin{equation*}
    \Tcal \ge \sum_{j=k+1}^{d}\nu_j^\downarrow - \frac{C}{g}\sbr{\rho^2\sum_{i=1}^{k}\sum_{j=k+1}^{d}(\lambda_i+\lambda_j) + \sum_{j=1}^{k}(\nu_j^\downarrow)^2 + \rho^4 n\,k(d-k)} - \Rcal(\rho).
\end{equation*}
\end{proof}

\subsection{Proof of Corollary~\ref{cor:smallcluster}}\label{app:smallcluster-cor}

\begin{proof}
Let $\bar c_1, \bar c_2 > 0$ denote the constants from Lemma~\ref{lem:Delta-opnorm}.

\emph{Verifying the hypotheses of Theorem~\ref{thm:smallcluster-main}.} Using $\sigma_1 \le c_2\sigma_k$, $\rho\sqrt{nd}\le c_1\sigma_k$, and $n \ge d \ge 1$,
\begin{equation*}
    \sigma_1\rho\sqrt d \le \frac{c_2\sigma_k\cdot c_1\sigma_k}{\sqrt n} \le c_1 c_2\,\sigma_k^2,\qquad
    \rho^2 n = \frac{(\rho\sqrt{nd})^2}{d} \le c_1^2\,\sigma_k^2,
\end{equation*}
\begin{equation*}
    \sigma_1\rho\sqrt{nd} \le c_1 c_2\,\sigma_k^2,\qquad
    \rho^2 nd \le c_1^2\,\sigma_k^2.
\end{equation*}
Substituting into~\eqref{eq:alpha-beta} and using $g \ge c_g\sigma_k^2$,
\begin{equation*}
    \alpha(\rho) \le \frac{2C(c_1 c_2 + c_1^2)}{c_g},\qquad
    \beta(\rho) \le \frac{2(2 c_1 c_2 + c_1^2)}{c_g}.
\end{equation*}
Choosing $c_1$ sufficiently small, we may ensure $\alpha(\rho), \beta(\rho) \le 1/2$ so that $\tfrac{1}{1-\alpha(\rho)},\ \tfrac{1}{1-\beta(\rho)} \le 2$. In particular $\alpha(\rho), \beta(\rho) < 1$, so the hypotheses of Theorem~\ref{thm:smallcluster-main} are satisfied.

\emph{Bounding the leading terms.} From $\sigma_1 \le c_2\sigma_k$, $\lambda_1 = \sigma_1^2 = O(\sigma_k^2)$ and hence $\lambda_i+\lambda_j = O(\sigma_k^2)$ for all $i\le k<j$. Therefore
\begin{equation*}
    \frac{\rho^2}{g}\sum_{i=1}^{k}\sum_{j=k+1}^{d}(\lambda_i+\lambda_j) \le C\,k(d-k)\,\rho^2.
\end{equation*}
Since each $\nu_j \le n\rho^2/4$,
\begin{equation*}
    \frac{1}{g}\sum_{j=1}^{k}(\nu_j^\downarrow)^2 \le C\frac{kn^2\rho^4}{\sigma_k^2}, \qquad \frac{\rho^4 n\,k(d-k)}{g} \le C\frac{kdn\,\rho^4}{\sigma_k^2}.
\end{equation*}

\emph{Bounding the remainder.} Plugging the definitions~\eqref{eq:alpha-beta} into Lemma~\ref{lem:remainder} together with $g=\Theta(\sigma_k^2)$ and $\lambda_1=O(\sigma_k^2)$ gives
\begin{equation}\label{eq:R-cube-form}
    \Rcal(\rho) \le \frac{Cd}{\sigma_k^4}\sbr{\rbr{\sigma_1\rho\sqrt d + \rho^2 n}^3 + \bar c_1 e^{-\bar c_2 d}\rbr{2\sigma_1\rho\sqrt{nd}+\rho^2 nd}^3}.
\end{equation}
We claim the linear-in-$\rho$ part dominates the quadratic-in-$\rho$ part inside each cube. For the second cube, using $\sigma_1\ge\sigma_k$ and $\rho\sqrt{nd}\le c_1\sigma_k$,
\begin{equation*}
    \frac{2\sigma_1\rho\sqrt{nd}}{\rho^2 nd} \;=\; \frac{2\sigma_1}{\rho\sqrt{nd}} \;\ge\; \frac{2\sigma_k}{c_1\sigma_k} \;=\; \frac{2}{c_1}.
\end{equation*}
For the first cube, write $\rho n = \rho\sqrt{nd}\cdot\sqrt{n/d}\le c_1\sigma_k\sqrt{n/d}$. Using $\sigma_1\ge\sigma_k$ and the assumption $d\ge c_3\sqrt n$,
\begin{equation*}
    \frac{\sigma_1\rho\sqrt d}{\rho^2 n} \;=\; \frac{\sigma_1\sqrt d}{\rho n} \;\ge\; \frac{\sigma_k\sqrt d}{c_1\sigma_k\sqrt{n/d}} \;=\; \frac{1}{c_1}\cdot\frac{d}{\sqrt n} \;\ge\; \frac{c_3}{c_1}.
\end{equation*}
Choosing $c_1\le 1$ and $c_3\ge 2c_1$, both ratios are $\ge 2$, so $(a+b)^3\le 4 a^3$ for $a$ the linear part and $b$ the quadratic part. Thus $(\sigma_1\rho\sqrt d+\rho^2 n)^3\le 4(\sigma_1\rho\sqrt d)^3$ and $(2\sigma_1\rho\sqrt{nd}+\rho^2 nd)^3\le 4(2\sigma_1\rho\sqrt{nd})^3$. Substituting into~\eqref{eq:R-cube-form} and using $\sigma_1\le c_2\sigma_k$,
\begin{equation*}
    \Rcal(\rho) \le C\sbr{\frac{d^{5/2}\rho^3}{\sigma_k} + \bar c_1 e^{-\bar c_2 d}\,\frac{d^{5/2} n^{3/2}\rho^3}{\sigma_k}} \le C\,\frac{d^{5/2}\rho^3}{\sigma_k},
\end{equation*}
where in the last step we absorb the exponentially small bad-event contribution. More precisely, by the assumption $d\ge c_3 \sqrt{n}, \bar c_1 e^{-\bar c_2 d}\,n^{3/2}$ vanishes as $n\rightarrow \infty$.

Substituting these bounds into Theorem~\ref{thm:smallcluster-main} proves the claim.
\end{proof}

\subsection{Proof of Remark~\ref{remark:rectangle}}\label{app:remark_rectangle}
\begin{proof}
    Indeed, assume $\sum_{j=k+1}^{d}\nu_j^\downarrow \ge (d-k) n \rho^2 \nu_0$ for some constant $\nu_0$. Applying the assumption $\rho \sqrt{n d}\le c_1 \sigma_k$ yields:
\begin{align*}
    \frac{k(d-k)\,\rho^2}{\sum_{j=k+1}^{d}\nu_j^\downarrow}\;&\le\; \frac{1}{\nu_0}\cdot\frac{k}{n}, \\
    \frac{kn^2\rho^4/\sigma_k^2}{\sum_{j=k+1}^{d}\nu_j^\downarrow}\;&\le\; \frac{c_1^2}{\nu_0}\cdot\frac{k}{d\,(d-k)}, \\
    \frac{d^{5/2}\rho^3/\sigma_k}{\sum_{j=k+1}^{d}\nu_j^\downarrow}\;&\le\; \frac{c_1}{\nu_0}\cdot\frac{d^{2}}{(d-k)\,n^{3/2}}.
\end{align*}
Note that for the 3rd ratio to vanish, we need $d = o (n^{3/4})$.

Recall the definition of $\nu$ in~\eqref{def:nu}. For the special case $k = d-1$, the leading term of our bound becomes:
\begin{align*}
    \sum_{j=k+1}^{d}\nu_j^\downarrow = \nu_d^\downarrow = \min_{1\le j\le d}\{\nu_j\} = n \rho^2 \nu = \rbr{ \rho \sqrt{n \nu}}^2.
\end{align*}
\end{proof}

\subsection{Proof of Corollary~\ref{cor:smallcluster-square}}\label{app:smallcluster-square}

\begin{proof}
We specialize Corollary~\ref{cor:smallcluster} to $d=n$. The first assumption $\rho\sqrt{nd}\le c_1\sigma_k$ reduces to $\rho n\le c_1\sigma_k$, while the second is unchanged. Substituting $d=n$ into the conclusion of Corollary~\ref{cor:smallcluster},
\begin{equation*}
    \Tcal\ge \sum_{j=k+1}^{n}\nu_j^\downarrow - C\rbr{k(n-k)\,\rho^2 + \frac{kn^2\rho^4}{\sigma_k^2}+\frac{n^{5/2}\rho^3}{\sigma_k}}.
\end{equation*}
The middle term is absorbed into the last via $\rho n\le c_1\sigma_k$:
\begin{equation*}
    \frac{k n^2 \rho^4}{\sigma_k^2} = \frac{k n \rho^3}{\sigma_k^2} \cdot n \rho \le \frac{k n \rho^3}{\sigma_k^2} \cdot c_1 \sigma_k < c_1 \frac{n^2 \rho^3}{\sigma_k} < c_1 \frac{n^{5/2}\rho^3}{\sigma_k}.
\end{equation*}
which gives the claimed bound.
\end{proof}

\subsection{Proof of Remark~\ref{remark:square}}\label{app:remark_square}

\begin{proof}
    Indeed, assume $\sum_{j=k+1}^{d}\nu_j^\downarrow \ge (n-k) n \rho^2 \nu_0$ for some constant $\nu_0$. Applying the assumption $\rho n\le c_1 \sigma_k$ yields:
\begin{equation*}
    \frac{k (n-k)\rho^2}{\sum_{j=k+1}^{n}\nu_j^\downarrow}\;\le\; \frac{1}{\nu_0}\cdot\frac{k}{n}, \qquad
    \frac{n^{5/2}\rho^3/\sigma_k}{\sum_{j=k+1}^{n}\nu_j^\downarrow}\;\le\; \frac{1}{\nu_0}\cdot\frac{n^{3/2}\rho}{(n-k)\,\sigma_k}\;\le\; \frac{c_1}{\nu_0}\cdot\frac{\sqrt n}{n-k}.
\end{equation*}
\end{proof}

\subsection{Alternative Bounds for Small Clusters} \label{sec:alter_bounds_small_cluster}
In this section we provide alternative bounds for the \textbf{expected excess tail energy} after rounding,
\begin{equation*}
    \Tcal := \Ex{\sum_{i=k+1}^{d} \sigma_i(\Abtil)^2} - \sum_{i=k+1}^{d}\sigma_i(\Ab)^2.
\end{equation*}

We start with an alternative upper bound for $\twonorm{\Deltab}$, and propagate the changes to Lemma~\ref{lem:remainder}, Theorem~\ref{thm:smallcluster-main} and Corollary~\ref{cor:smallcluster}.

\begin{lemma}[Alternative bound of $\twonorm{\Deltab}$]\label{lem:Delta-opnorm_alter}
Under the noise model of Section~\ref{noise-model}, with $n \ge d$ and $\Ab$ of full column rank, there exists a positive constant $C$, such that:
\begin{equation*}
    \pr{\twonorm{\Deltab} \le C (\sigma_1(\Ab) \rho \sqrt{n} + \rho^2 n)} \ge 1 - 2 e^{-\rbr{n+d}}.
\end{equation*}
\end{lemma}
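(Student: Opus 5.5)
The plan is to follow the remark preceding Lemma~\ref{lem:remainder}: drop the finer projection bound of Lemma~\ref{lem:PAE} and control $\twonorm{\Deltab}$ entirely through the operator norm of $\Eb$. This gives a weaker dimensional factor ($\sqrt n$ instead of $\sqrt d$) but a failure probability that is exponentially small in $n+d$.

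\emph{Step 1 (deterministic reduction).} Since $\Deltab = \Ab^\ts \Eb + \Eb^\ts \Ab + \Eb^\ts \Eb$, the triangle inequality, $\twonorm{\Mb^\ts}=\twonorm{\Mb}$, and submultiplicativity give
\begin{equation*}
    \twonorm{\Deltab} \le 2\twonorm{\Ab}\twonorm{\Eb} + \twonorm{\Eb}^2 = 2\sigma_1(\Ab)\twonorm{\Eb} + \twonorm{\Eb}^2 .
\end{equation*}
This inequality holds pointwise for every realization of $\Eb$. Compared with Step~1 of the proof of Lemma~\ref{lem:Delta-opnorm}, the only change is that we do not pass through $\Ub^\ts\Eb$.

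\emph{Step 2 (probabilistic input).} Invoke the second statement of Lemma~\ref{lem:E-opnorm}. It provides a constant $C'$ and an event $\Ecal$ with $\pr{\Ecal}\ge 1-2e^{-(n+d)}$ on which $\twonorm{\Eb}\le C'\rho\sqrt n$. On $\Ecal$, Step~1 yields
\begin{equation*}
    \twonorm{\Deltab} \le 2C'\sigma_1(\Ab)\rho\sqrt n + (C')^2\rho^2 n \le C\rbr{\sigma_1(\Ab)\rho\sqrt n + \rho^2 n},
\end{equation*}
with $C:=\max\{2C',(C')^2\}$. No union bound is required, because only a single event is used. The failure probability is therefore exactly the one inherited from Lemma~\ref{lem:E-opnorm}, namely $2e^{-(n+d)}$, as claimed. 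The assumptions $n\ge d$ and full column rank are needed only because Lemma~\ref{lem:E-opnorm}'s second statement uses $n\ge d$ to replace $\sqrt{n+d}$ by $\sqrt n$; full rank plays no real role here.

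\emph{Main obstacle.} There is essentially none, since all the work sits in Lemma~\ref{lem:E-opnorm}. The only point to check carefully is that its second statement really gives probability $1-2e^{-(n+d)}$ with an absolute constant. That requires choosing $s=C''\sqrt{n+d}$ in the net argument with $C''$ large enough that $c\,s^2\ge n+d$. After that, $\sqrt{n+d}\le\sqrt{2n}$ is absorbed into the constant.
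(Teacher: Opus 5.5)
Your proposal is correct and matches the paper's proof: it uses the same pointwise bound $\twonorm{\Deltab} \le 2\sigma_1(\Ab)\twonorm{\Eb} + \twonorm{\Eb}^2$, followed by the second statement of Lemma~\ref{lem:E-opnorm} on a single event, so no union bound is needed. You are also right that full column rank plays no role in this argument.
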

\begin{proof}
    With triangle inequality and submultiplicity, we can bound $\twonorm{\Deltab}$ by $2 \sigma_1(\Ab) \twonorm{\Eb}+ \twonorm{\Eb}^2$. Invoking Lemma~\ref{lem:E-opnorm} completes the proof.
\end{proof}

Define another contraction parameter:
\begin{equation}\label{eq:alpha_tilde}
    \tilde{\alpha}(\rho):= \; \frac{2}{g}\,C\rbr{\sigma_1(\Ab)\,\rho\sqrt{n} + \rho^2 n}.
\end{equation}
On the high-probability event of Lemma~\ref{lem:Delta-opnorm_alter}, $\twonorm{\Rb_\Gb(z)\Deltab} \le \tilde{\alpha}(\rho)$ uniformly in $z\in\Gamma$; on its complement, the deterministic Frobenius bound $\twonorm{\Eb}\le\|\Eb\|_F\le\rho\sqrt{nd}$ gives $\twonorm{\Rb_\Gb(z)\Deltab}\le\beta(\rho)$.

\begin{lemma}[Higher-order remainder alternative]\label{lem:remainder_alter}
Assume $\tilde{\alpha}(\rho) < 1$ and $\beta(\rho) < 1$ with $\tilde{\alpha},\beta$ as in~\eqref{eq:alpha-beta},~\eqref{eq:alpha_tilde}. Then there is an absolute constant $C > 0$ such that
\begin{equation*}
    \abr{\sum_{\ell=3}^{\infty} I_\ell} \le \frac{Cd}{g}\rbr{2(\lambda_1 - \lambda_k) + \pi g}\rbr{\lambda_1 + \frac{g}{2}}\rbr{\frac{\tilde{\alpha}(\rho)^3}{1 - \tilde{\alpha}(\rho)} + 2 e^{-(n+d)}\,\frac{\beta(\rho)^3}{1 - \beta(\rho)}}.
\end{equation*}
\end{lemma}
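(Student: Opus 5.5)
The plan is to repeat the proof of Lemma~\ref{lem:remainder} verbatim, replacing the high-probability event $\Gcal$ of Lemma~\ref{lem:Delta-opnorm} by the event $\tilde\Gcal$ of Lemma~\ref{lem:Delta-opnorm_alter}. On $\tilde\Gcal$ we have $\twonorm{\Deltab}\le C(\sigma_1\rho\sqrt n+\rho^2 n)$, and $\pr{\tilde\Gcal^c}\le 2e^{-(n+d)}$. For $z\in\Gamma$, set $X(z):=\twonorm{\Rb_\Gb(z)\Deltab}$.

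\textbf{Step 1.} Exactly as in~\eqref{eq:Iell-step1-final}, combine the triangle inequality for contour integrals, Jensen's inequality, $\abr{\tr(\Mb)}\le d\twonorm{\Mb}$, submultiplicativity, and the resolvent bound~\eqref{eq:resolvent-norm-Gamma}. This gives, for each $\ell\ge3$,
\begin{equation*}
\abr{I_\ell}\le \frac{d}{\pi g}\oint_\Gamma \abr{z}\,\Ex{X(z)^\ell}\,\abr{dz}.
\end{equation*}

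\textbf{Step 2.} Since $X(z)\le \frac{2}{g}\twonorm{\Deltab}$, we get $X(z)\mathbf{1}_{\tilde\Gcal}\le\tilde\alpha(\rho)$ by~\eqref{eq:alpha_tilde}. On the complement we reuse the deterministic bound $\twonorm{\Eb}\le\|\Eb\|_F\le\rho\sqrt{nd}$, which gives $X(z)\mathbf{1}_{\tilde\Gcal^c}\le\beta(\rho)$ as before. Splitting the expectation on $\tilde\Gcal$ and $\tilde\Gcal^c$ yields
\begin{equation*}
\Ex{X(z)^\ell}\le\tilde\alpha(\rho)^\ell+2e^{-(n+d)}\beta(\rho)^\ell.
\end{equation*}

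\textbf{Step 3.} Bound $\oint_\Gamma\abr{z}\abr{dz}$ by $\operatorname{len}(\Gamma)\sup_{z\in\Gamma}\abr{z}=(2(\lambda_1-\lambda_k)+\pi g)(\lambda_1+g/2)$. Then sum the two geometric series over $\ell\ge3$, which converge because $\tilde\alpha(\rho),\beta(\rho)<1$. Absorbing $1/\pi$ into $C$ gives the stated bound.

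There is no real obstacle, since this is a direct substitution into an existing argument. The only point to check is that the probability bound on $\tilde\Gcal^c$ comes from the second statement of Lemma~\ref{lem:E-opnorm}, so that the factor $2e^{-(n+d)}$ replaces $c_1e^{-c_2d}$. We also note that the convergence of the Neumann series (Lemma~\ref{lem:topk-series}) holds almost surely, because $\twonorm{\Deltab}<g/2$ on both events when $\tilde\alpha(\rho),\beta(\rho)<1$.
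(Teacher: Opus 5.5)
Your proposal is correct and is essentially the paper's own argument: the paper proves this lemma by repeating the proof of Lemma~\ref{lem:remainder}, with the event of Lemma~\ref{lem:Delta-opnorm_alter} (failure probability at most $2e^{-(n+d)}$) replacing that of Lemma~\ref{lem:Delta-opnorm}, exactly as you do. Your check that the series of Lemma~\ref{lem:topk-series} is valid almost surely is also correct; in fact $\beta(\rho)<1$ alone gives $\twonorm{\Deltab}<g/2$, via the deterministic Frobenius bound.
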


\begin{proof}
    Follow the same steps as in Appendix~\ref{app:remainder}.
\end{proof}

\begin{theorem}\label{thm:smallcluster-main_alter}
Assume $\tilde{\alpha}(\rho) < 1$ and $\beta(\rho) < 1$ with $\tilde{\alpha}, \beta$ as in~\eqref{eq:alpha-beta},~\eqref{eq:alpha_tilde}. Then
\begin{equation*}
    \Tcal \ge \sum_{j=k+1}^{d}\nu_j^\downarrow - \frac{C}{g}\sbr{\rho^2 \sum_{i=1}^{k}\sum_{j=k+1}^{d}(\lambda_i + \lambda_j) + \sum_{j=1}^{k}\rbr{\nu_j^\downarrow}^2 + \rho^4 n\, k(d-k)} - \tilde{\Rcal}(\rho),
\end{equation*}
where the higher-order remainder $\tilde{\Rcal}(\rho)$ is the right-hand side of Lemma~\ref{lem:remainder_alter}.
\end{theorem}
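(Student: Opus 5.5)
The plan mirrors the proof of Theorem~\ref{thm:smallcluster-main}, with Lemma~\ref{lem:Delta-opnorm} replaced by Lemma~\ref{lem:Delta-opnorm_alter} and Lemma~\ref{lem:remainder} replaced by Lemma~\ref{lem:remainder_alter}.

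First, we check that $\twonorm{\Deltab}<g/2$ holds almost surely, so that Lemma~\ref{lem:topk-series} applies. Let $\tilde\Gcal$ be the event of Lemma~\ref{lem:Delta-opnorm_alter}. On $\tilde\Gcal$,
\begin{equation*}
\twonorm{\Deltab}\le C\rbr{\sigma_1\rho\sqrt n+\rho^2 n}=\tfrac{g}{2}\tilde\alpha(\rho)<\tfrac g2 .
\end{equation*}
On $\tilde\Gcal^c$, the deterministic bound $\twonorm{\Eb}\le\|\Eb\|_F\le\rho\sqrt{nd}$ combined with $\twonorm{\Deltab}\le 2\sigma_1\twonorm{\Eb}+\twonorm{\Eb}^2$ gives $\twonorm{\Deltab}\le\tfrac g2\beta(\rho)<\tfrac g2$. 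Hence the contour $\Gamma$ still separates the top-$k$ eigenvalues of $\Gbtil$ from the rest. The Neumann series converges uniformly on $\Gamma$, and the expansion~\eqref{eq:topk-series} holds.

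Next, we combine this expansion with the identity~\eqref{eq:tail-top-identity} and with $\Ex{\|\Eb\|_F^2}=\tr(\Db)=\sum_{j=1}^d\nu_j$, which gives
\begin{equation*}
\Tcal=\sum_{j=1}^d\nu_j-I_1-I_2-\sum_{\ell\ge3}I_\ell .
\end{equation*}
We then bound each piece:
\begin{itemize}
\item Lemma~\ref{lem:I1} gives $I_1\le\sum_{j\le k}\nu_j^\downarrow$. Subtracting this from the total variance leaves $\sum_{j>k}\nu_j^\downarrow$.
\item Lemma~\ref{lem:I2} supplies the bracketed $C/g$ term. Neither $I_1$ nor $I_2$ depends on which operator-norm bound for $\Deltab$ is used, so both lemmas transfer unchanged.
\item Lemma~\ref{lem:remainder_alter} bounds $\abr{\sum_{\ell\ge3}I_\ell}\le\tilde\Rcal(\rho)$.
\end{itemize}
Putting these together yields the stated inequality.

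The only step that needs care is Lemma~\ref{lem:remainder_alter}. Its proof repeats Appendix~\ref{app:remainder} with the event $\tilde\Gcal$ in place of $\Gcal$, splitting according to whether $\tilde\Gcal$ holds:
\begin{equation*}
\Ex{X(z)^\ell}\le\tilde\alpha(\rho)^\ell+2e^{-(n+d)}\beta(\rho)^\ell .
\end{equation*}
The remaining ingredients are the contour length and the bound $\sup_{z\in\Gamma}\abr{z}$ used in Appendix~\ref{app:remainder}, followed by geometric summation over $\ell\ge3$. All of these are routine, so no step is expected to be a real obstacle.
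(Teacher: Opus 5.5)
Your proposal is correct and matches the paper's argument: the paper proves this theorem by repeating Appendix~\ref{app:smallcluster-main} with the event of Lemma~\ref{lem:Delta-opnorm_alter} in place of that of Lemma~\ref{lem:Delta-opnorm}. That is, it shows $\twonorm{\Deltab}<g/2$ almost surely (via $\tilde\alpha(\rho)<1$ on the good event and the deterministic Frobenius bound with $\beta(\rho)<1$ on its complement), then combines~\eqref{eq:tail-top-identity} and Lemma~\ref{lem:topk-series} with Lemmas~\ref{lem:I1}, \ref{lem:I2}, and~\ref{lem:remainder_alter}, exactly as you do.
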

\begin{proof}
    Follow the same steps as in Appendix~\ref{app:smallcluster-main}.
\end{proof}

\begin{corollary}\label{cor:smallcluster_alter}
There exist absolute constants $c_1, c_2, c_g > 0$ such that, if
\begin{equation*}
    \rho\sqrt{nd} \le c_1\,\sigma_k, \qquad \sigma_1 \le c_2\,\sigma_k, \qquad g > c_g \lambda_k,
\end{equation*}
then there is a constant $C > 0$ such that
\begin{equation*}
    \Tcal \ge \sum_{j=k+1}^{d}\nu_j^\downarrow - C\rbr{k(d-k)\,\rho^2 + \frac{k\,n^2 \rho^4}{\sigma_k^2} + \frac{d n^{3/2}\rho^3}{\sigma_k}}.
\end{equation*}
\end{corollary}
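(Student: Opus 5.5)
The plan is to mirror the proof of Corollary~\ref{cor:smallcluster} given in Appendix~\ref{app:smallcluster-cor}, now starting from Theorem~\ref{thm:smallcluster-main_alter}. The only change is that the remainder is $\tilde{\Rcal}(\rho)$, built from $\tilde\alpha(\rho)$ and the failure probability $2e^{-(n+d)}$, instead of $\Rcal(\rho)$. There are three steps: verify the hypotheses $\tilde\alpha(\rho),\beta(\rho)<1$; bound the three second-order terms exactly as before; and bound $\tilde\Rcal(\rho)$ without using $d\ge c_3\sqrt n$.

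\textbf{Hypotheses.} From $\rho\sqrt{nd}\le c_1\sigma_k$ and $d\ge1$ we get $\rho\sqrt n\le c_1\sigma_k$. Hence $\sigma_1\rho\sqrt n\le c_1c_2\sigma_k^2$, and also $\rho^2 n\le c_1^2\sigma_k^2$. Since $g\ge c_g\sigma_k^2$, this gives $\tilde\alpha(\rho)\le 2C(c_1c_2+c_1^2)/c_g$. The bound on $\beta(\rho)$ is identical to the one in Appendix~\ref{app:smallcluster-cor}. Taking $c_1$ small makes both quantities at most $1/2$, so the factors $1/(1-\cdot)$ are at most $2$. The second-order bracket is bounded verbatim as in Appendix~\ref{app:smallcluster-cor}. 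Using $\lambda_1=O(\sigma_k^2)$ and $g=\Theta(\sigma_k^2)$ gives $k(d-k)\rho^2$. Using $\nu_j\le n\rho^2/4$ gives $kn^2\rho^4/\sigma_k^2$. The term $\rho^4 nk(d-k)/g\le Ckdn\rho^4/\sigma_k^2$ is dominated by $kn^2\rho^4/\sigma_k^2$ because $d\le n$.

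\textbf{Remainder.} The geometric factor satisfies $(2(\lambda_1-\lambda_k)+\pi g)(\lambda_1+g/2)/g=O(\sigma_k^2)$. Therefore
\[
\tilde\Rcal(\rho)\le \frac{Cd}{\sigma_k^4}\Bigl[(\sigma_1\rho\sqrt n+\rho^2 n)^3+2e^{-(n+d)}(2\sigma_1\rho\sqrt{nd}+\rho^2nd)^3\Bigr].
\]
In the first cube the linear part dominates unconditionally: $\sigma_1\rho\sqrt n/(\rho^2 n)=\sigma_1/(\rho\sqrt n)\ge 1/c_1$, using $\rho\sqrt n\le c_1\sigma_k\le c_1\sigma_1$. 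This is precisely why the condition $d\ge c_3\sqrt n$ is no longer needed. Hence the first cube is at most $4(\sigma_1\rho\sqrt n)^3\le C\sigma_k^3\rho^3n^{3/2}$, which contributes $Cdn^{3/2}\rho^3/\sigma_k$.

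The second cube is handled as before. Its linear part dominates and is at most $C\sigma_k^3\rho^3(nd)^{3/2}$, so it contributes $Ce^{-(n+d)}d^{5/2}n^{3/2}\rho^3/\sigma_k$. Since $d\le n$, we have $d^{3/2}e^{-(n+d)}\le n^{3/2}e^{-n}\le C$. Thus this term is at most a constant times $dn^{3/2}\rho^3/\sigma_k$ and is absorbed.

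The main obstacle is bookkeeping rather than a new idea. The bad-event term must be absorbed using only $e^{-(n+d)}$, which is now much smaller than $e^{-c_2d}$, and the constants $c_1,c_2,c_g$ must be chosen consistently. Both of these are straightforward with the exponent $n+d$. Summing the three contributions then yields the stated bound.
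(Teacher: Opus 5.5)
Your proposal is correct and follows the paper's proof essentially step for step. You check $\tilde\alpha(\rho),\beta(\rho)\le 1/2$ by taking $c_1$ small, and bound the second-order bracket exactly as in Corollary~\ref{cor:smallcluster}, using $d\le n$ to absorb $\rho^4 nk(d-k)/g$. You then show the linear part dominates each cube and absorb the bad-event term via $d^{3/2}e^{-(n+d)}\le n^{3/2}e^{-n}$. The only cosmetic difference is that you lower-bound the first-cube ratio by $1/c_1$ instead of the paper's $\sqrt{d}/c_1$, which is equally sufficient.
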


\begin{proof}

\emph{Verifying the hypotheses of Theorem~\ref{thm:smallcluster-main_alter}.} Using $\sigma_1 \le c_2\sigma_k$, $\rho\sqrt{nd} \le c_1\sigma_k$, and $n \ge d \ge 1$,
\begin{equation*}
    \sigma_1\rho\sqrt n \le \frac{c_2\sigma_k\cdot c_1\sigma_k}{\sqrt d} \le c_1 c_2\,\sigma_k^2,\qquad
    \rho^2 n = \frac{(\rho\sqrt{nd})^2}{d} \le c_1^2\,\sigma_k^2,
\end{equation*}
\begin{equation*}
    \sigma_1\rho\sqrt{nd} \le c_1 c_2\,\sigma_k^2,\qquad
    \rho^2 nd \le c_1^2\,\sigma_k^2.
\end{equation*}
Substituting into~\eqref{eq:alpha_tilde} and~\eqref{eq:alpha-beta}, and using $g \ge c_g\sigma_k^2$,
\begin{equation*}
    \tilde\alpha(\rho) \le \frac{2C(c_1 c_2 + c_1^2)}{c_g},\qquad
    \beta(\rho) \le \frac{2(2 c_1 c_2 + c_1^2)}{c_g}.
\end{equation*}
Choosing $c_1$ sufficiently small, we may ensure $\tilde\alpha(\rho), \beta(\rho) \le 1/2$ so that $\tfrac{1}{1-\tilde\alpha(\rho)},\ \tfrac{1}{1-\beta(\rho)} \le 2$. The hypotheses of Theorem~\ref{thm:smallcluster-main_alter} are thus satisfied.

\emph{Bounding the leading terms.} From $\sigma_1 \le c_2\sigma_k$, $\lambda_1 = \sigma_1^2 = O(\sigma_k^2)$ and hence $\lambda_i + \lambda_j = O(\sigma_k^2)$ for all $i \le k < j$. Therefore
\begin{equation*}
    \frac{\rho^2}{g}\sum_{i=1}^{k}\sum_{j=k+1}^{d}(\lambda_i+\lambda_j) \le C\,k(d-k)\,\rho^2.
\end{equation*}
Since each $\nu_j \le n\rho^2/4$,
\begin{equation*}
    \frac{1}{g}\sum_{j=1}^{k}(\nu_j^\downarrow)^2 \le C\frac{kn^2\rho^4}{\sigma_k^2}, \qquad \frac{\rho^4 n\,k(d-k)}{g} \le C\frac{\rho^4\,n\,k(d-k)}{\sigma_k^2} \le C\frac{kn^2\rho^4}{\sigma_k^2}.
\end{equation*}

\emph{Bounding the remainder.} Plugging~\eqref{eq:alpha_tilde} and~\eqref{eq:alpha-beta} into Lemma~\ref{lem:remainder_alter} together with $g = \Theta(\sigma_k^2)$ and $\lambda_1 = O(\sigma_k^2)$ gives
\begin{equation}\label{eq:R-cube-form-alter}
    \tilde{\Rcal}(\rho) \le \frac{Cd}{\sigma_k^4}\sbr{\rbr{\sigma_1\rho\sqrt n + \rho^2 n}^3 + 2 e^{-(n+d)}\rbr{2\sigma_1\rho\sqrt{nd}+\rho^2 nd}^3}.
\end{equation}
We claim the linear-in-$\rho$ part dominates the quadratic-in-$\rho$ part inside each cube. For the second cube, using $\sigma_1 \ge \sigma_k$ and $\rho\sqrt{nd} \le c_1\sigma_k$,
\begin{equation*}
    \frac{2\sigma_1\rho\sqrt{nd}}{\rho^2 nd} \;=\; \frac{2\sigma_1}{\rho\sqrt{nd}} \;\ge\; \frac{2\sigma_k}{c_1\sigma_k} \;=\; \frac{2}{c_1}.
\end{equation*}
For the first cube, write $\rho\sqrt n = \rho\sqrt{nd}/\sqrt d \le c_1\sigma_k/\sqrt d$. Using $\sigma_1 \ge \sigma_k$ and $d \ge 1$,
\begin{equation*}
    \frac{\sigma_1\rho\sqrt n}{\rho^2 n} \;=\; \frac{\sigma_1}{\rho\sqrt n} \;\ge\; \frac{\sigma_k}{c_1\sigma_k/\sqrt d} \;=\; \frac{\sqrt d}{c_1}.
\end{equation*}
(In contrast to Corollary~\ref{cor:smallcluster}, no separate lower bound on $d/\sqrt n$ is required: the alternative remainder uses $\sqrt n$ rather than $\sqrt d$ inside $\tilde\alpha$, so dominance follows directly from $\rho\sqrt{nd}\le c_1\sigma_k$.) Choosing $c_1 \le 1/2$, both ratios are $\ge 2$, so $(a+b)^3 \le 4 a^3$ for $a$ the linear part and $b$ the quadratic part. Thus $(\sigma_1\rho\sqrt n + \rho^2 n)^3 \le 4(\sigma_1\rho\sqrt n)^3$ and $(2\sigma_1\rho\sqrt{nd} + \rho^2 nd)^3 \le 4(2\sigma_1\rho\sqrt{nd})^3$. Substituting into~\eqref{eq:R-cube-form-alter} and using $\sigma_1 \le c_2\sigma_k$,
\begin{equation*}
    \tilde{\Rcal}(\rho) \le C\sbr{\frac{d\, n^{3/2}\rho^3}{\sigma_k} + 2 e^{-(n+d)}\,\frac{d^{5/2} n^{3/2}\rho^3}{\sigma_k}} \le C\,\frac{d\, n^{3/2}\rho^3}{\sigma_k},
\end{equation*}
where the last step absorbs the exponentially small bad-event contribution: since $d \le n$, $e^{-(n+d)} d^{3/2} \le e^{-n} n^{3/2} \to 0$ as $n \to \infty$.

Substituting these bounds into Theorem~\ref{thm:smallcluster-main_alter} proves the claim.
\end{proof}


\end{document}